\pdfoutput=1

\documentclass[a4paper]{amsart}

\usepackage{graphicx}
\usepackage{epstopdf}
\epstopdfsetup{outdir=./}
\usepackage{subcaption}
\usepackage{lipsum} 
\usepackage{amsrefs}
\usepackage{sidecap}
\usepackage{float}
\makeatletter
\renewcommand\paragraph{\@startsection{paragraph}{4}%
  \z@ \z@ {-\fontdimen2\font}%
  {\normalfont\bfseries}} 
\makeatother
\usepackage{tgtermes}  

\usepackage{color}
\usepackage{mathrsfs}
\usepackage{tikz-cd}
\usepackage{stmaryrd}

\usepackage{enumitem}
\newlist{myQuoteEnumerate}{enumerate}{2}
\setlist[myQuoteEnumerate,1]{label=(\alph*)}
\setlist[myQuoteEnumerate,2]{label=(\alph*)}

\newcommand{\gap}{\vspace{0.2cm}}

\usepackage{amsmath,amssymb,latexsym}
\usepackage[utf8]{inputenc}
\usepackage[english]{babel}
\usepackage{booktabs}
\newtheorem{thm}{Theorem}[section]
\newtheorem{thmm}{Theorem}[section]
\newtheorem{cor}{Corollary}[section]
\newtheorem{lem}{Lemma}[section]

\newtheorem{defi}{Definition}[section]
\newtheorem{example}{Example}[section]
\newtheorem{remark}{Remark}[section]

\usepackage{siunitx}
\usepackage{xcolor}
\usepackage{booktabs,colortbl, array}
\usepackage{pgfplotstable}

\definecolor{rulecolor}{RGB}{0,71,171}
\definecolor{tableheadcolor}{gray}{0.92}


%

%

\numberwithin{equation}{section}

\newcommand\quotient[2]{
        \mathchoice
            {
                \text{\raise1ex\hbox{$#1$}\Big/\lower1ex\hbox{$#2$}}%
            }
            {
                #1\,/\,#2
            }
            {
                #1\,/\,#2
            }
            {
                #1\,/\,#2
            }
    }

\definecolor{aurometalsaurus}{rgb}{0.43, 0.5, 0.5}
\definecolor{darkjunglegreen}{rgb}{0.1, 0.14, 0.13}
\definecolor{coolblack}{rgb}{0.0, 0.18, 0.39}
\definecolor{cobalt}{rgb}{0.0, 0.28, 0.67}

\usepackage{hyperref}
\hypersetup{
     colorlinks   = true,
     citecolor    = blue
}

\title[Subhyperbolicity at Misiurewicz points]{Misiurewicz points and subhyperbolicity in unicritical algebraic correspondences}

\author{Carlos Siqueira }

\date{\today}

\address{Department of Mathematics, Institute of Mathematics and Statistics, 
Federal University of Bahia, Salvador -- BA, Brazil}
\curraddr{Department of Mathematics, Institute of Mathematical and Computer Sciences, 
University of São Paulo, São Carlos -- SP, Brazil}
\email{carlos.siqueira@ufba.br}

\begin{document}

\hypersetup{linkcolor=cobalt}

\begin{abstract}

We provide the first definition of \emph{Misiurewicz parameter} for the unicritical family of algebraic correspondences \( z^r + c \), with \( r > 1 \) rational, and prove that, at every Misiurewicz parameter, the correspondence uniformly expands the canonical orbifold metric on a neighborhood of the Julia set. This is achieved using Thurston's ideas on postcritically finite rational maps, regular branched coverings, and orbifolds, viewing the correspondence as a global analytic multifunction. 

This result provides the necessary tools for further investigations into the fine structure of the parameter space near Misiurewicz points, particularly in exploring similarities between the local geometry of the parameter space and the Julia sets at such parameters. 
Finally, we present both rigorous examples and empirical evidence suggesting that Misiurewicz parameters are abundant and may be detected by identifying increasingly small copies of the Multibrot set nested within itself: the smaller the copy, the closer it is likely to be to a Misiurewicz parameter.

\end{abstract}

\maketitle

\keywords{MSC-class 2020:  37F05, 37F10 (Primary)   37F32 (Secondary).}

\tableofcontents

\section{Introduction}\label{asdfasdfbasdfcccds}

In the early 1980s, Douady and Hubbard, drawing on Thurston's insights into the topological characterization of postcritically finite rational maps~\cite{DH93}, gave the first definition of subhyperbolic maps in~\cite{DH84}, providing characterizations based on the behavior of critical orbits.  
In that work, they also carried out a detailed study of polynomial maps of the form $z^d + c$ for degrees $d > 1$.

\gap
\paragraph{Main results.} Building on Thurston’s orbifold theory~\cite{DH93} and elementary sheaf theory, our main contributions are Theorems~\ref{gdsadgwed}, \ref{ljksoijdfkwdfsed}, \ref{alkjasdpoipoallskdf}, and~\ref{lkjajhlkjhasdfoiqdfadf}, presented in the next section, where we extend the notion of subhyperbolicity to algebraic correspondences of the form~\eqref{lkjdllkjhalsoed}.
 This setting naturally generalizes the classical family $f_c(z) = z^d + c$ to $\mathbf{f}_c(z) = z^r + c$, where $r > 1$ is a rational exponent. Our main result establishes that if $a \in \mathbb{C}$ is a Misiurewicz point (Definition \ref{dlkhaskjhdfjhkwefdc}), then $\mathbf{f}_c$ expands a suitable orbifold metric uniformly on a neighborhood of the Julia set, with only finitely many singularities. As a consequence, we obtain that the filled Julia set coincides with the Julia set
 at any such Misiurewicz point~$a$.
This more general framework, rooted in the theory of algebraic correspondences, has been the subject of significant attention in recent decades. 
\gap
\paragraph{Key developments on algebraic correspondences}
\label{lljasdflhlhbdqefsdf}
The advent of computer graphics in the late 1970s, together with Mandelbrot's iconic visualizations of fractals, sparked a global interest in holomorphic dynamics. This revival also brought renewed attention to multi-valued systems (long studied in the context of Fuchsian groups), particularly following Bullett's work in the early 1990s on critically finite correspondences \cite{Bullett92}, and the subsequent breakthrough by Bullett and Penrose, who introduced a family~$\mathcal{F}_a$ of algebraic correspondences  (matings) whose connectedness locus was conjectured to be homeomorphic to the Mandelbrot set~\cite{Bullett1994}.

Nearly three decades later, this conjecture was confirmed through a series of major advances. Bullett and Lomonaco~\cite{BL19, BULLETT2024109956} provided a rigorous proof of the conjecture originally posed in~\cite{Bullett1994}, while independent developments by Lee, Lyubich, Makarov, and Mukherjee~\cite{Mukherjee} introduced a family~$\mathcal{S}$ of Schwarz reflections that yield anti-holomorphic correspondences, realized as matings between anti-rational maps and the abstract modular group. One of the most striking results in~\cite{Mukherjee} is the construction of a homeomorphism between the abstract connectedness locus of $\mathcal{S}$  and the abstract parabolic Tricorn, the combinatorial model of the Tricorn. The latter can be interpreted as the connectedness locus of a family of anti-holomorphic quadratic polynomials, also known as the anti-holomorphic Mandelbrot set.

\gap
\paragraph{Application to asymptotic similarity.} Our results serve as key tools for analyzing the similarity between Multibrot sets and Julia sets arising from the dynamics of $\mathbf{f}_c$. Just as Tan Lei  \cite{TanLei} established an analogous  phenomenon for the quadratic family $z^2 + c$, building on earlier results developed by Douady and Hubbard~\cite{DH84}, we will apply the theorems of this paper to extend Tan Lei's results to correspondences $\mathbf{f}_c(z) = z^r + c$, where $r > 1$ is rational. This generalization is carried out in detail in~\cite{siqueira2025quadratic}. See also Figure~\ref{flhalkjhsdfopaoiusdfad} for an illustration in this context.

 \gap \paragraph{On critically finite correspondences.}
It is worth noting that the application of Thurston's classification to the setting of algebraic correspondences was first explored in the seminal work of Bullett~\cite{Bullett92}, who classified critically finite quadratic correspondences (those in which every critical point has a finite full orbit under both forward and backward iteration), showing that such systems exhibit strong rigidity. In particular, Bullett demonstrated that, up to conformal conjugacy, there exist only eleven such correspondences of quadratic type.

\section{Definitions and main results}\label{adfasdflkhblkjhsoidf}

 Recall that for the quadratic family $f_c(z) = z^2 + c$, a point $c \in \mathbb{C}$ is called \emph{Misiurewicz} if the critical point is strictly preperiodic under iterations by  $f_c.$ Thanks to special geometric properties of $f_c$, it can be shown that the critical point eventually maps to a repelling cycle when the parameter $c$ is a Misiurewicz point. 
As previously described in~\cite{ETDS22, SS17, Proc22}, many of the geometric features characteristic of the quadratic family -- such as rigidity, hyperbolic Julia sets of zero area, and  holomorphic motions -- also extend to maps of the form
\(
 z^r + c,
\)
where $r=p/q > 1$ is a rational number. However, such maps are no longer single-valued, but rather define algebraic correspondences given by
\begin{equation}\label{lkjdllkjhalsoed}
\mathbf{f}_c(z) = \left\{ w \in \mathbb{C} : (w - c)^q = z^p \right\},
\end{equation}
where $p > q$ are integers in $[2, \infty)$.

\textcolor{black}{See Remark~\ref{asldkjsspsdflsdjf} for the motivation behind the following definition.}

\begin{defi}[\bf Misiurewicz point]\label{dlkhaskjhdfjhkwefdc}\normalfont A parameter $a$ of the family of holomorphic correspondences $\mathbf{f}_c$ is a \emph{Misiurewicz point} if  $(i)$ the critical point $0$ has only one bounded forward  orbit \[ 0 \xrightarrow{\mathbf{f}_a} z_0 =a\xrightarrow{\mathbf{f}_a} z_1 \xrightarrow{\mathbf{f}_a} \cdots \] and $(ii)$ this orbit is strictly preperiodic. The first point of this orbit which is periodic is denoted   by $z_{\ell}$ and the associated cycle $\alpha_a$ of period $n$ is \[z_{\ell} \mapsto z_{\ell +1}  \mapsto \cdots \mapsto z_{\ell +n}=z_{\ell}.\]  
\end{defi}

Repelling cycles are defined in Section 
\ref{gdasdfasdc}. Theorem~\ref{lkjajhlkjhasdfoiqdfadf} shows that the cycle $\alpha_a$ is repelling. 

{\color{black}The motivation for the following definition is discussed in Remark~\ref{asdfakjhsoidfsdf}.}

\begin{defi}\normalfont
The \emph{filled Julia set} $K_c$ is the set of all points $z$ in the complex plane that have at least one bounded forward orbit.
\end{defi}
Next, we introduce a generalization of the Mandelbrot set corresponding to the family~\eqref{lkjdllkjhalsoed}.

\begin{defi}[\bf Multibrot set] \label{ahlkjhasdfpooadsfa}
\normalfont
Let \( p, q \) be positive integers with \( p > q \geq  1 \). The \emph{Multibrot set} \( M_{p,q} \) associated with the family~\eqref{lkjdllkjhalsoed} is defined by
\[
M_{p,q} = \{ c \in \mathbb{C} \mid 0 \in K_c \},
\]
where \( K_c \) denotes the filled Julia set of the correspondence~\eqref{lkjdllkjhalsoed} for the given pair \( (p, q) \).
\end{defi}

For the quadratic family \( f_c(z) = z^2 + c \), Misiurewicz points form a countable and dense subset of the boundary of the Mandelbrot set; see~\cite{milnor1989self}.

This naturally raises the question of whether a similar density of Misiurewicz points holds on the boundary of Multibrot sets arising from holomorphic correspondences. While we do not address this difficult problem in the present paper, we do provide a first example illustrating the existence of infinitely many Misiurewicz points for the family \( \mathbf{f}_c \) with \( p = 4 \) and \( q = 2 \). In this case, the dynamics of \( \mathbf{f}_c \) corresponds to the semigroup generated by the pair \( \langle z^2 + c,\ -z^2 + c \rangle \).

\begin{example}\label{asdgalkhcjsdfs}  
\normalfont
We will prove that there exist infinitely many Misiurewicz points in a neighborhood of \( c = -2 \) for the semigroup family \( \langle z^2 + c,\ -z^2 + c \rangle \). The parameter \( c = -2 \)  is a Misiurewicz point for this family: we will show that the critical point \( 0 \) has only one bounded forward orbit, which is strictly preperiodic, mapping to the fixed point \( 2.\)

 \begin{figure}[h]
  \centering
  \begin{subfigure}[b]{0.5\linewidth}
    \centering
    \includegraphics[width=\linewidth]{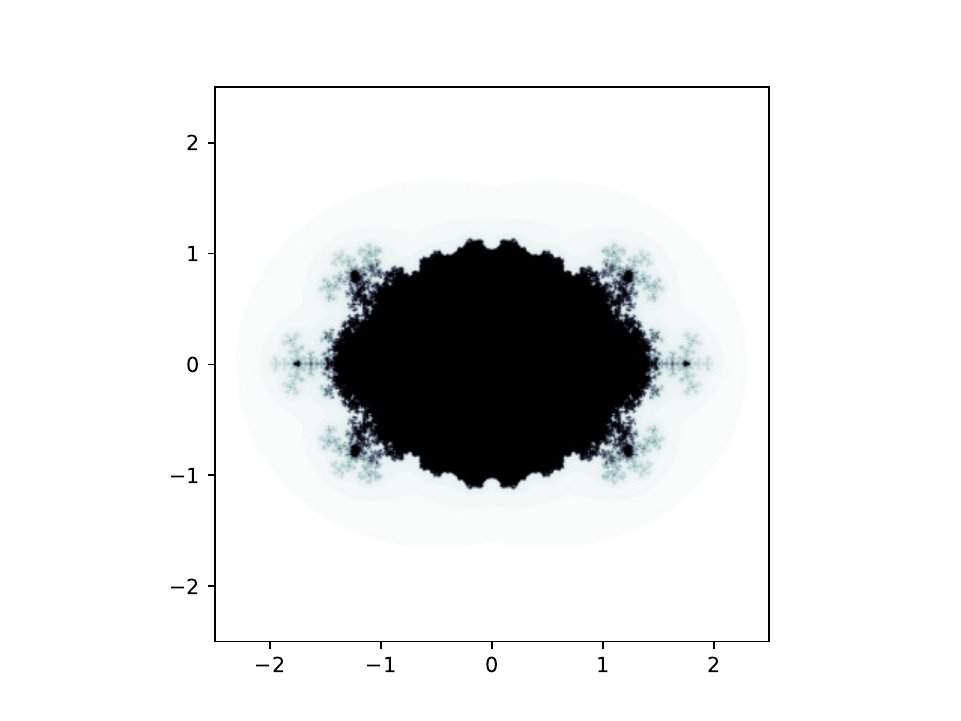}
    \caption{{\tiny $M_{4,2}$}} 
    \label{lkhasdgopohasdg}
  \end{subfigure}%
  \hfill
   \begin{subfigure}[b]{0.5\linewidth}
    \centering
    \includegraphics[width=\linewidth]{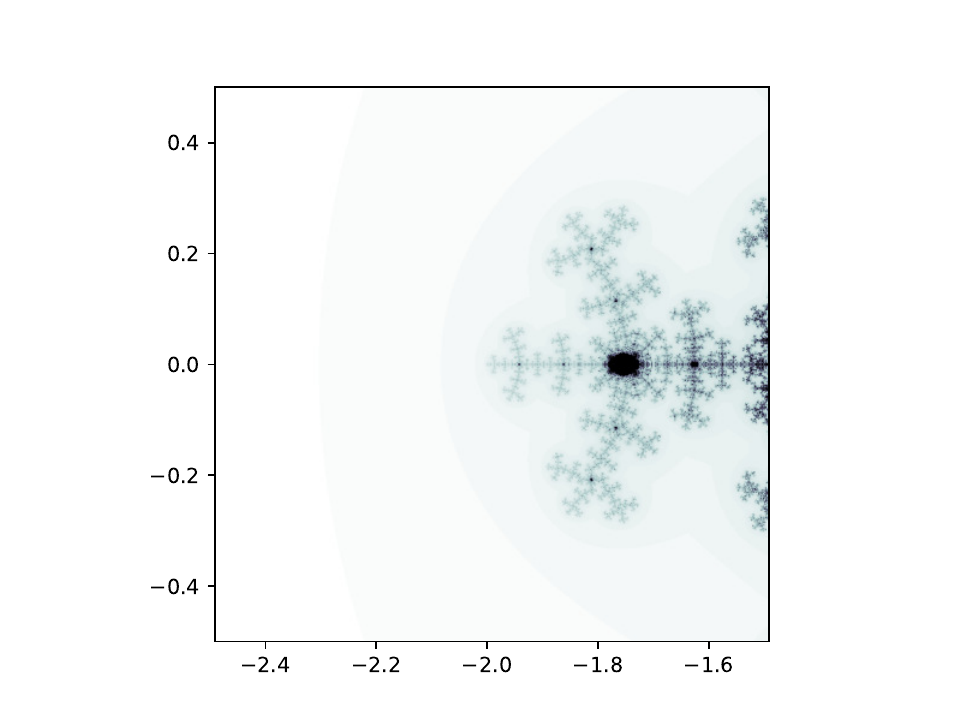}
    \caption{{\tiny $M_{4,2}$ zoomed $10\times$ near $ -2$
}}
    \label{lkhbsljhssdfasdflkkcs}
  \end{subfigure}%
  \hfill
   \begin{subfigure}[b]{0.5\linewidth}
    \centering
    \includegraphics[width=\linewidth]{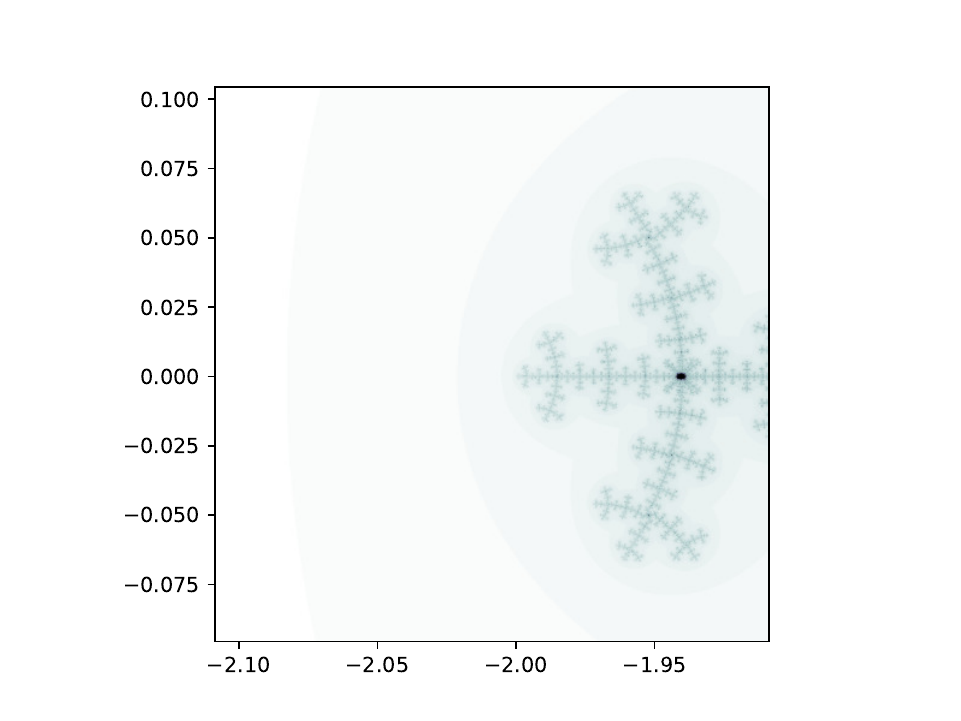}
    \caption{\tiny $M_{4,2}$ zoomed $100\times$ near $ -2$}
    \label{lhlkjhapplkspoqweprqc}
  \end{subfigure}%
  \hfill
   \begin{subfigure}[b]{0.5\linewidth}
    \centering
    \includegraphics[width=\linewidth]{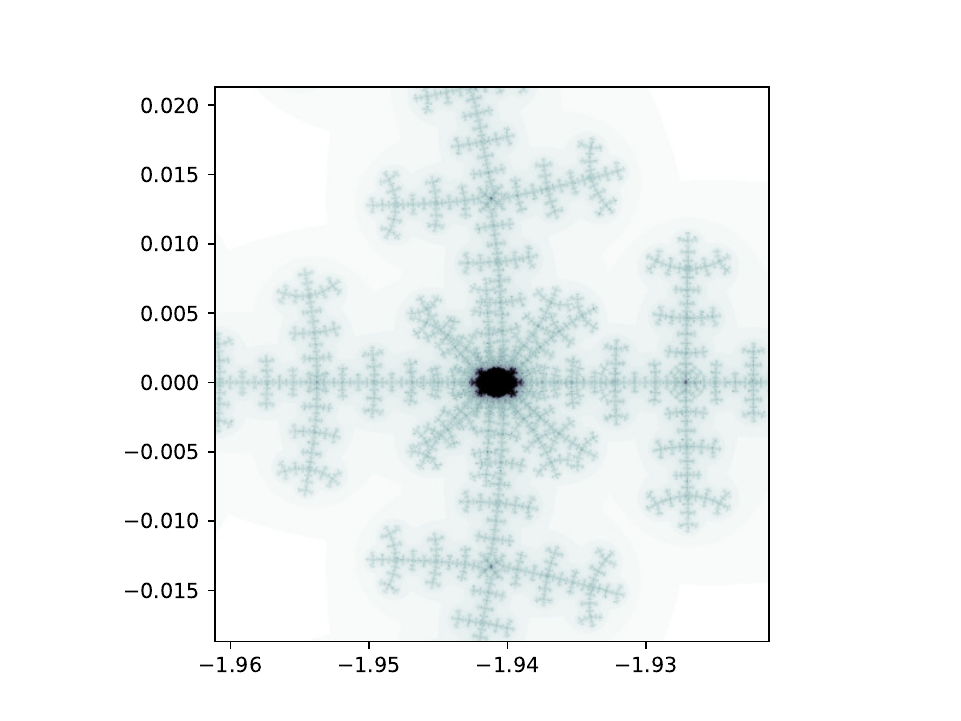}
    \caption{\tiny $M_{4,2}$ zoomed $1000\times$ near $ -1.94$}
    \label{cpoiapopqadfacbqew}
  \end{subfigure}%
  \hfill
   \begin{subfigure}[b]{0.5\linewidth}
    \centering
    \includegraphics[width=\linewidth]{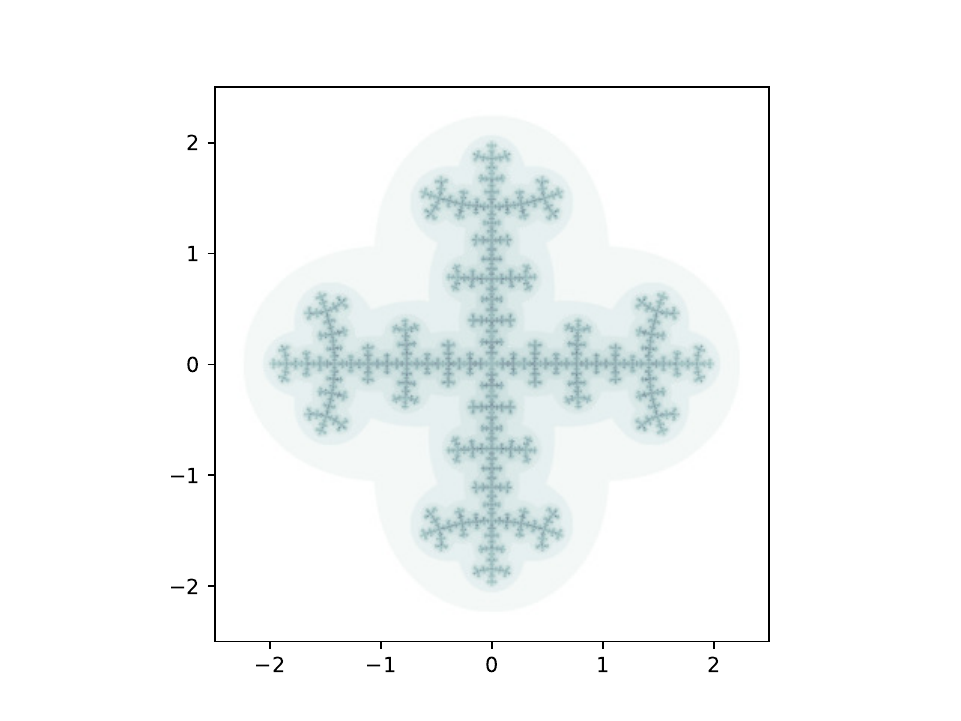}
    \caption{\tiny $K_c,$ $c=-2,$ empty interior}
    \label{poiapoiusfapoibdadqwefd}
    \end{subfigure}%
     \hfill
   \begin{subfigure}[b]{0.5\linewidth}
    \centering
    \includegraphics[width=\linewidth]{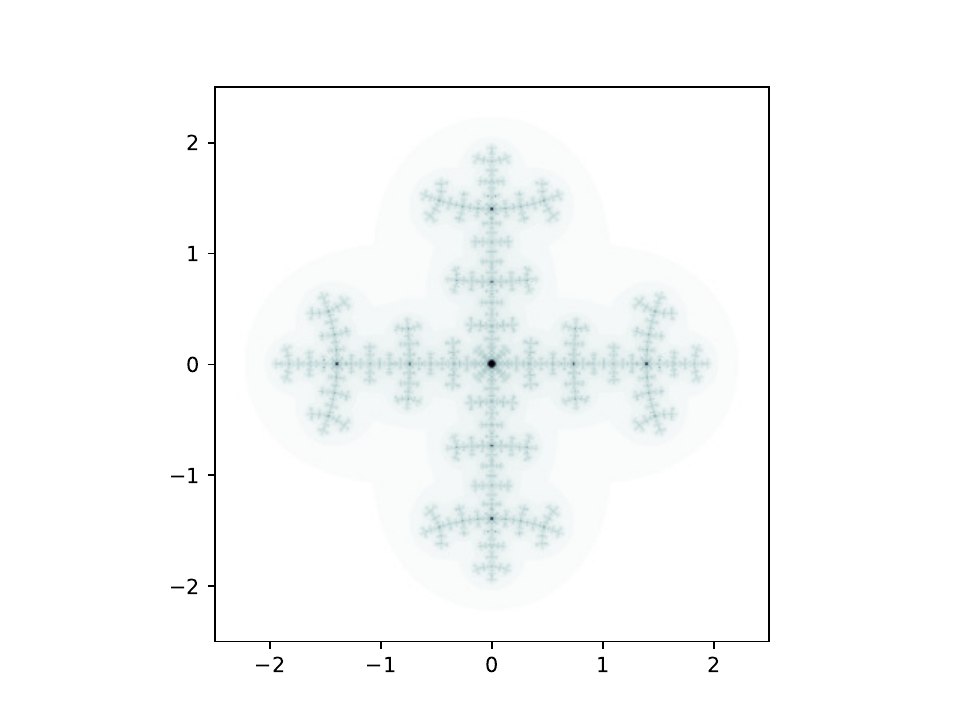}
    \caption{\tiny $K_c,$ $c=-1.94,$ nonempty interior}
    \label{poiapoiussdfsdfqwefd}
    \end{subfigure}%
   
  \caption{ The Multibrot set $M_{4,2}$ and related copies, illustrating the similarity between $M_{4,2}$ and the filled Julia set $K_{-2}$ near the parameter $-2$. There exists an infinite sequence of Misiurewicz points $c_n$, each located near a small copy of $M_{4,2}$, and converging to $-2$. See Example~\ref{asdgalkhcjsdfs}. The point $-2$ is not exceptional; similar patterns occur near infinitely many copies of $M_{4,2}$, as well as in other Multibrot sets $M_{p,q}$, as discussed in~\cite{siqueira2025quadratic}.
 }
  \label{flhalkjhsdfopaoiusdfad}
\end{figure}

Indeed,  a  direct computation shows that the only bounded forward orbit of the critical point under \( \mathbf{f}_{-2} \) is 
\[
0 \mapsto -2 \mapsto 2 \mapsto 2.
\] 
Thus \( c = -2 \) is a Misiurewicz point for the correspondence. Moreover, the intersection of \( \mathbf{f}_{-2}^2(0) \) with the open ball \( B(0,6) = \{ z \in \mathbb{C} : |z| < 6 \} \) is precisely \( \{2\} \). 

Since the complement of this ball is forward invariant and lies in the basin of infinity for all \( c \) sufficiently close to $-2$, it follows by stability that \( \mathbf{f}_c^2(0) \) intersects \( B(0,6) \) in a single point. Thus when \( c \) is close to \( -2 \), there is at most one bounded forward orbit of the critical point under \( \mathbf{f}_c \).

On the other hand, Misiurewicz parameters are dense in the boundary \( \partial M \) of the Mandelbrot set for the quadratic family \( z^2 + c \), so there exists a sequence \( c_n \to -2 \) for which the orbit of \( 0 \) under \( z^2 + c_n \) is strictly preperiodic. Since this orbit is also admissible under the correspondence, each \( c_n \) admits exactly one bounded, strictly preperiodic orbit of the critical point under \( \mathbf{f}_{c_n} \). Hence each \( c_n \) is a Misiurewicz point for the correspondence.

In contrast, although \( c = i \) is a Misiurewicz parameter for the quadratic family, it fails to satisfy the same property for the correspondence, as the critical point has multiple bounded forward orbits and is not preperiodic to a single repelling cycle. Indeed, after computing the iterate \( \mathbf{f}_i^3(i) \), we find that only four orbits of \( 0 \) remain bounded, while all others escape to the basin of infinity. These four distinct orbits eventually land on four different  cycles, so the critical point is not preperiodic to a single cycle. Thus \( i \) is not a Misiurewicz parameter for the correspondence.
\end{example}

\begin{remark}\normalfont
It should be noted that, according to the results in \cite[\S 2.1]{ETDS22},  the filled Julia set $K_c$ is connected for every parameter $c \in M_{p,q}$. The set $M_{2,1}$ coincides with the classical Mandelbrot set.     \end{remark}
{\color{black}
\begin{remark}\label{asldkjsspsdflsdjf}
\normalfont
Consider the quadratic family $f_c(z)=z^2+c$. If $a\in\mathbb{C}$ is a Misiurewicz parameter, it is well known that:
\begin{enumerate}
\item $J_a = K_a$, and the filled Julia set $K_a$ has empty interior;
\item there exist infinitely many small copies of the Mandelbrot set accumulating at $a$;
\item the Mandelbrot set $M$ and the filled Julia set $K_a$ are asymptotically similar in a neighborhood of $a$.
\end{enumerate}

As shown in Example~\ref{asdgalkhcjsdfs}, when $c=i$ the critical point is preperiodic to multiple distinct cycles under the dynamics of the semigroup $\langle z^2+c,\,-z^2+c\rangle$. In this situation,  experiments indicate that the associated filled Julia set satisfies none of the properties~$(1)$--$(3).$
These observations motivate our definition of a Misiurewicz point. Allowing the critical point to be preperiodic to multiple cycles may lead to interesting phenomena; however, such behavior does not generalize the classical polynomial case. 
\end{remark}}

The following theory, due to Thurston~\cite{DH93}, is both elegant and deserving of a clear presentation, as it underpins our main results.

\gap \paragraph{Branched coverings and orbifolds}
An \emph{orbifold} is a pair $(X, \nu)$ where $X$ is a Riemann surface and \textcolor{black}{$\nu: X \to \{ 1, 2, \ldots \}$} is a function such that \emph{the set of ramified points} $\{ x \in X: \nu(x) >1\} $ is locally finite. The integer $\nu(x)$ is the \emph{branch index} of $x.$

Suppose that $X$ and $Y$ are Riemann surfaces. A surjective holomorphic map $\varphi: X \to Y$ is \emph{proper} if the inverse image of any compact set in $Y$ is a compact set in $X.$ Proper maps satisfy some interesting properties: the set of branch points $B=\{ x\in X: \varphi'(x) =0\}$ is locally finite, as well as $R=\varphi(B)$ and $\varphi^{-1}(R)$. The induced map $f: X {\setminus} \varphi^{-1}(R) \to Y {\setminus} R$ is a covering map of finite degree $d$. For this reason proper maps are also known as \emph{$d$-fold branched coverings.}

A surjective holomorphic map $\varphi: X \to Y$ is a \emph{branched covering} if every $y\in Y$ has a neighborhood $U$ such that $\varphi$ maps every connected component of $\varphi^{-1}(U)$ onto $U$ as a proper map. If there exists a subgroup $\Gamma$ of the group of conformal automorphisms of $X$ such that $\varphi(x_1) =\varphi(x_2)$ if, and only if, $\gamma.x_1=x_2$ for some $\gamma \in \Gamma$, then $\Gamma$ is uniquely determined and  $\varphi$ is called a \emph{regular branched covering.} This is the well-known \emph{group of deck transformations}.

A few properties of regular branched coverings: the set of branch points  $B$ is locally finite and $\Gamma.B=B.$   The local degree of $\varphi$ at a point $x \in B$ is $n$ if $\varphi^{(k)}(x) = 0$ for all $1 \leq k < n$, and $\varphi^{(n)}(x) \neq 0$.
The local degree of $\varphi$ at  $x \in B$ is the same as the local degree of $\varphi$ at $\gamma.x,$ for all $\gamma \in \Gamma.$ Any  point of $R=\varphi(B)$ is a \emph{ramified point.}  Since $\varphi$ is regular, $\varphi^{-1}(R)=B;$ moreover,  the local degree of $\varphi$ at every point in the inverse image of a ramified point $y$ is the same integer $d$, defined as the \emph{ramification index} $\nu(y)=d.$ This defines a \emph{ramification function} $\nu : Y \to \{1, 2, \ldots\}$ associated with the regular branched covering $\varphi$.
 Notice that $y$ is a ramified point if, and only if, $\nu(y) > 1.$

\begin{thm}[\bf Thurston, Douady and Hubbard]\label{gjcgdwoood} Let $(S, \nu)$ be a Riemann surface orbifold that is conformally isomorphic to the complex plane.  Suppose that $\nu$ is non-trivial, that is, we have at least two ramified points with different ramification indices. Then there exists a regular branched covering $\varphi: \tilde{S}_{\nu} \to (\mathbb{C}, \nu)$, unique up to conformal isomorphisms,  such that $\tilde{S}_{\nu}$ is conformally isomorphic to the hyperbolic disk $\mathbb{D}$  and the ramification function of this covering is the given $\nu.$

\end{thm}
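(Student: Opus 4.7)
The plan is to construct $\tilde S_\nu$ as the orbifold universal cover of $(\mathbb{C},\nu)$ in two stages: first as a topological regular cover of the punctured base with prescribed monodromy orders, then completed across the ramification locus to a branched covering. Afterwards I would identify the result with $\mathbb{D}$ via uniformization and deduce uniqueness from a lifting argument.

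Let $R=\{y_1,y_2,\dots\}$ be the locally finite set of ramified points, put $S^\ast=\mathbb{C}\setminus R$, fix a basepoint $x_0\in S^\ast$, and around each $y_i$ choose a small simple loop $\gamma_i$ in $S^\ast$ based at $x_0$. Let $H\trianglelefteq \pi_1(S^\ast,x_0)$ denote the normal closure of $\{\gamma_i^{\nu(y_i)}\}_i$, and consider the connected regular covering $\psi:\tilde S^\ast\to S^\ast$ corresponding to $H$, whose deck group is the orbifold fundamental group $G=\pi_1(S^\ast,x_0)/H$. Since the monodromy of $\psi$ around each $y_i$ has order exactly $\nu(y_i)$, every end of $\tilde S^\ast$ lying over a small punctured disk at $y_i$ is itself a punctured disk covering it cyclically of degree $\nu(y_i)$. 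Adjoining one point to each such end produces a Riemann surface $\tilde S_\nu\supset \tilde S^\ast$, and $\psi$ extends by Riemann's removable singularity theorem to a holomorphic map $\varphi:\tilde S_\nu\to\mathbb{C}$ which in local coordinates at each added point takes the form $z\mapsto z^{\nu(y_i)}$. This makes $\varphi$ a regular branched covering with ramification function precisely $\nu$, and a van Kampen calculation that kills the generators $\gamma_i^{\nu(y_i)}$ of $H=\pi_1(\tilde S^\ast)$ shows $\tilde S_\nu$ is simply connected.

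By the uniformization theorem $\tilde S_\nu$ is biholomorphic to one of $\hat{\mathbb{C}}$, $\mathbb{C}$, or $\mathbb{D}$. The sphere is ruled out because otherwise $\mathbb{C}=\varphi(\tilde S_\nu)$ would be the continuous image of a compact space. To rule out the plane, which I expect to be the main technical point, suppose $\tilde S_\nu\cong\mathbb{C}$. Then the deck group is a discrete subgroup $\Gamma$ of $\mathrm{Aut}(\mathbb{C})=\mathrm{Aff}(\mathbb{C})$ with quotient $\mathbb{C}$ as a Riemann surface. If $\Gamma$ contained any translation, the quotient would be a cylinder, a torus, or a compact sphere orbifold, none of which is $\mathbb{C}$; hence $\Gamma$ consists purely of affine maps sharing a common fixed point and, being discrete, is a finite cyclic rotation group, producing at most one ramified point in the quotient. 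This contradicts the hypothesis, so $\tilde S_\nu\cong\mathbb{D}$. For uniqueness, if $\varphi':\tilde S'\to(\mathbb{C},\nu)$ is another regular branched covering with $\tilde S'\cong\mathbb{D}$ and ramification function $\nu$, the restricted unbranched coverings of $S^\ast$ are both classified by the same subgroup $H$ (since the orders of the peripheral monodromies are prescribed by the ramification), and therefore canonically isomorphic; the isomorphism extends across the added points because $\varphi$ and $\varphi'$ share the local form $z\mapsto z^{\nu(y_i)}$ there, yielding the conformal equivalence that intertwines the two coverings.
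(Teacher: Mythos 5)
Your construction of the orbifold universal cover is a genuinely different route from the paper (which simply invokes Milnor's Theorem E.1 / Douady--Hubbard for existence and uniqueness and only verifies hyperbolicity by computing the orbifold Euler characteristic $\chi(S,\nu)\leq 1-\tfrac12-\tfrac23=-\tfrac16$), but the step where you rule out $\tilde S_\nu\cong\mathbb{C}$ contains a genuine error. You claim that if the deck group $\Gamma\subset\mathrm{Aff}(\mathbb{C})$ contains a translation, then the quotient is a cylinder, a torus, or a compact sphere orbifold, ``none of which is $\mathbb{C}$.'' This is false: the group $\Gamma=\langle z\mapsto z+1,\ z\mapsto -z\rangle$ contains translations, acts properly discontinuously on $\mathbb{C}$, and its quotient has underlying Riemann surface $\mathbb{C}$ (via $z\mapsto e^{2\pi i z}\mapsto w+1/w$) with exactly two cone points, both of index $2$. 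This omission matters because your final contradiction uses only ``at least two ramified points'' (you conclude the quotient has at most one cone point), never the hypothesis that the two indices are \emph{different}. As written, your argument would therefore also ``prove'' that the orbifold $(\mathbb{C};2,2)$ has universal cover $\mathbb{D}$ --- which is false, since that orbifold is parabolic (it is exactly the quotient above, covered by $\mathbb{C}$). The fix is to complete the case analysis: a rank-$2$ translation lattice forces a compact quotient, a rank-$1$ lattice forces quotient $\mathbb{C}^*$ or, after adjoining a half-turn, $\mathbb{C}$ with signature $(2,2)$; the latter is then excluded precisely by the distinct-index hypothesis. Alternatively, follow the paper and use $\chi(\mathbb{C},\nu)\le -\tfrac16<0$ together with the standard fact (Milnor, Lemmas E.3--E.4) that negative orbifold Euler characteristic forces a hyperbolic universal cover.

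Two smaller points deserve explicit justification rather than assertion. First, the claim that the peripheral monodromy around $y_i$ has order \emph{exactly} $\nu(y_i)$ in $G=\pi_1(S^*)/H$ is the crux of existence: it is exactly what fails for the sphere teardrop and $(p,q)$-spindle. Here it is true because $\mathbb{C}\setminus R$ deformation retracts onto a wedge of circles so that $\pi_1(S^*)$ is free on the peripheral loops, making $G$ a free product of the cyclic groups $\mathbb{Z}/\nu(y_i)$, in which each factor embeds; say this, since planarity is doing real work. Second, in the uniqueness argument, prescribing the ramification only gives $H\subseteq H'$ for the subgroup $H'$ classifying the other covering; the reverse inclusion needs the simple connectivity of $\tilde S'\cong\mathbb{D}$, which forces $H'$ to be generated by conjugates of the $\gamma_i^{\nu(y_i)}$. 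With these repairs your construction is a valid, more self-contained alternative to the paper's citation-based proof.
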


\begin{proof}[Sketch of Proof and References]
Our statement follows the exposition of John Milnor in \cite{Milnor}, which itself is based on the original theory developed in the seminal paper \cite{DH93}. The result discussed here is a special case of Theorem E.1 in \cite{Milnor}, with the additional conclusion that the universal covering space is $\tilde{S}_{\nu} \simeq \mathbb{D}$, a fact we now proceed to justify.

According to Douady and Hubbard \cite{DH93}, the \emph{Euler characteristic of the orbifold} $(S, \nu)$ is defined by
\[
\chi(S, \nu) = \chi(S) + \sum_{j} \left( \frac{1}{\nu(a_j)} - 1 \right),
\]
where the sum is taken over all ramified points $a_j$. Based on the assumptions on  $\nu$, we may suppose that $\nu(a_1) > 1$ and $\nu(a_2) > 2$. Since the topological Euler characteristic is $\chi(S) = 1$, a straightforward calculation using the formula above yields $\chi(S, \nu) \leq -\frac{1}{6}$. 

It then follows from Lemmas E.3 and E.4 in \cite{Milnor} that the universal cover $\tilde{S}_{\nu}$ is hyperbolic; that is, $\tilde{S}_{\nu} \simeq \mathbb{D}$.
\end{proof}

Let $(\mathbb{C}, \nu)$ be an orbifold with a non-trivial ramification function.
Let $\varphi$ denote the regular branched covering given by Theorem~\ref{gjcgdwoood}.

Let $\varphi$ denote the regular branched covering given by  Theorem \ref{gjcgdwoood}. As usual, the set of branch points is $B=\{x \in \tilde{S}_{\nu}: \varphi'(x) =0\}$ and its image under $\varphi$ is the set of ramified points, which we denote by $R.$  Since $\varphi$ is regular, $\varphi^{-1}(R) =B.$ Both $B$ and $R$ are locally finite.  The restriction $\varphi: \tilde{S}_{\nu}{\setminus} B \to \mathbb{C}{\setminus} R$ is a covering map. Let $\rho_1(z)|dz|$ denote the Poincar\'e metric on $\tilde{S}_{\nu} \simeq \mathbb{D}$.
   Every element of the associated  group $\Gamma$ of deck transformations  is an isometry with respect to $\rho_1(z)|dz|.$ Using this fact we are allowed to make the following definition. 

\begin{defi}[\bf Orbifold metric]\label{lkjkjhgaiusdedd} 
\normalfont 
There exists a unique conformal metric $\rho_2(z)|dz|$ on $\mathbb{C}{\setminus} R$ such that 
\[
\varphi: (\tilde{S}_{\nu}{\setminus} B, \rho_1) \longrightarrow (\mathbb{C}{\setminus} R, \rho_2)
\] 
is a local isometry. This metric, called the \emph{orbifold metric} of $(\mathbb{C},\nu)$, is independent of the particular choice of $\varphi$; in fact, any $\varphi$ given by Theorem~\ref{gjcgdwoood} produces the same metric on $\mathbb{C}{\setminus} R$.
\end{defi}

The orbifold metric blows up at each ramified point $a_j$: in some punctured neighborhood $U^*$ of $a_j,$ we have  $\rho_2(z) \to \infty$ as $z\to a_j.$ (This property can be verified on page~211 of~\cite{Milnor}. Note that the local branched covering 
$z = a_j + w^{\nu(a_j)}$ and $\varphi$ have the same ramification index at $a_j$.)

\begin{defi}[\bf Ramified points of the correspondence]\label{lkjlhdoiehfeddd}\normalfont
Let $a$ be a Misiurewicz point for the family $\mathbf{f}_c$ given by~\eqref{lkjdllkjhalsoed}. The set of \emph{ramified points} is \begin{equation}\label{lkjhaskkjshdfbc}R= \bigcup_{k\geq 0} \mathbf{f}_a^k(0)\end{equation} where $\mathbf{f}_a^0$ denotes the identity map.
 Since this set is countable, we will denote its elements by $a_j$, with  $a_0=0$ and $j \geq 0.$ \end{defi}

By Lemma \ref{alkjacbcbkhlpsdf}, the set $R$ in \eqref{lkjhaskkjshdfbc} is locally finite.

\begin{defi}[\bf Canonical orbifold]\label{lkjlsjodijpoiefd}\normalfont
The \emph{ramification function}  $\nu_a$ associated with the Misiurewicz point $a$ is defined by setting $\nu_a(0)= q,$ $\nu_a(a_j)=p$ for every nonzero point $a_j$ in  the postcritical set, and $\nu_a(z)=1$ elsewhere.  It is always assumed that $q\geq 2$ and $p>q.$ The pair $(\mathbb{C}, \nu_a)$ is the \emph{canonical orbifold} of the correspondence.  The \emph{canonical orbifold metric} of $(\mathbb{C}, \nu_a)$ is the one given in Definition \ref{lkjkjhgaiusdedd} with $\nu=\nu_a.$
\end{defi}

{\color{black}
\paragraph{\bf The sheaf $\mathfrak{S}_{U}$} The following theorem is central to this work. In order to state it precisely, we briefly recall some standard notions from the theory of global analytic functions. Further background and related discussion can be found in Section~\ref{kjhkjhsdoiwe}.

Let $U\subset\mathbb{C}$ be a domain. The set of all germs $(f,z)$ of holomorphic functions with base point $z \in U$, denoted by $\mathfrak{S}_U$,
 carries a natural topology, under which it becomes the \emph{sheaf of germs of analytic functions} on $U$. Each connected component of this space is a Riemann surface, and the canonical projection
\(
\pi \colon (f,z)\mapsto z
\)
is a local homeomorphism.

Given a germ $(f,z)$ and a continuous path $\gamma\colon[0,1]\to U$ with $\gamma(0)=z$, an \emph{analytic continuation} of $(f,z)$ along $\gamma$ is a continuous lift $\widetilde{\gamma}\colon[0,1]\to \mathfrak{S}_U$ such that $\widetilde{\gamma}(0)=(f,z)$ and
\[
\pi\circ\widetilde{\gamma}(t)=\gamma(t)\quad\text{for all }t\in[0,1].
\]

A \emph{multifunction} $\mathbf{f}$ is a set--valued mapping defined on a domain $U \subset \mathbb{C}$, assigning to each point $z \in U$ a subset $\mathbf{f}(z) \subset \mathbb{C}$.  
A \emph{branch} of $\mathbf{f}$ is a holomorphic map $f$ such that $f(z) \in \mathbf{f}(z)$ for every $z$ in the domain of $f$.  
We say that $\mathbf{f}$ is a \emph{holomorphic multifunction} if, for every $z \in U$ and every $w \in \mathbf{f}(z)$, there exists a branch $f$ of $\mathbf{f}$ locally defined at $z$ such that $f(z)=w$.

Given a holomorphic multifunction $\mathbf{f} \colon U \to \mathbb{C}$, we denote by $\mathfrak{S}(\mathbf{f})$ the space of all germs $\mathfrak{f}=(f,z)$ arising from local branches $f$ of $\mathbf{f}$.  
If $\mathfrak{S}(\mathbf{f})$ is a connected component of the sheaf of germs of analytic functions on $U$, then $\mathbf{f}$ is called a \emph{global analytic multifunction}.

}
\begin{thmm}[\bf Decomposition] \label{gdsadgwed} Suppose  that $a$ is a Misiurewicz point of the family $\mathbf{f}_c$ given by \eqref{lkjdllkjhalsoed}.  Let $\varphi: \mathbb{D} \to \mathbb{C}$ be the unique (up to conformal isomorphisms) regular branched covering of  $(\mathbb{C}, \nu_a)$ which has the given $\nu_a$ as ramification function.  Then $\mathbf{g}(z) = \varphi^{-1}\circ \mathbf{f}_a^{-1} \circ \varphi(z)$ is a holomorphic multifunction  from $\mathbb{D}$ to $\mathbb{D}$ which can be decomposed into a family of global analytic multifunctions $\mathbf{g}_{\alpha}: \mathbb{D} \to \mathbb{D}$ such that \begin{equation} \label{fdfsdfsdfsdf} \mathfrak{S}(\mathbf{g}) = \bigcup_{\alpha} \mathfrak{S}(\mathbf{g}_{\alpha}) \textrm{\ and \ } \mathbf{g}(z)= \bigcup_{\alpha} \mathbf{g}_{\alpha}(z) \textrm{,\ for any $z\in \mathbb{D}.$}\end{equation}

\noindent Every germ $\mathfrak{f}$ in the Riemann surface $\mathfrak{S}(\mathbf{g}_{\alpha})$ can be continued along any curve in $\mathbb{D}$  starting at $\pi(\mathfrak{f}).$

\end{thmm}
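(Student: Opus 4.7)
The plan is to reduce the theorem to a local computation showing that $\mathbf{g}$ is a holomorphic multifunction on $\mathbb{D}$ with no essential singularities, and then to use simple-connectedness of $\mathbb{D}$ to promote local holomorphy of germs to global continuation. The decomposition $\mathbf{g}=\bigcup_\alpha\mathbf{g}_\alpha$ would then come from partitioning the Riemann surface $\mathfrak{S}(\mathbf{g})$ into its connected components.

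\textbf{Local lifting.} First I would fix $\tilde{\zeta}_0\in\mathbb{D}$ with $w_0=\varphi(\tilde{\zeta}_0)\in\mathbb{C}$ and pick a local branch of $\mathbf{f}_a^{-1}$ at $w_0$ going to $z_0\in\mathbf{f}_a^{-1}(w_0)$. At smooth points of the curve $C=\{(z,w):(w-a)^q=z^p\}$ this is a holomorphic germ $z=h(w)=z_0+c_1(w-w_0)+\cdots$; at the unique singular point $(z,w)=(0,a)$, the branch is given by the normalization $(z,w)=(t^q,\,a+t^p)$. With uniformizers $u$ at $\tilde{\zeta}_0$ and $v$ at a chosen $\tilde{\eta}_0\in\varphi^{-1}(z_0)$, the branched covering $\varphi$ satisfies
\[
w-w_0 \;=\; u^{\nu_a(w_0)}\cdot(\mathrm{unit}),\qquad z-z_0 \;=\; v^{\nu_a(z_0)}\cdot(\mathrm{unit}).
\]
Substituting into the branch equation produces $v^{\nu_a(z_0)}=c\,u^{N}+\cdots$, with $N=\nu_a(w_0)$ at smooth points of $C$ and $N=q$ at the singular point; a single-valued holomorphic lift $v=v(u)$ exists precisely when $\nu_a(z_0)\mid N$.

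\textbf{Orbifold compatibility.} The required divisibility holds in every case, as a consequence of the asymmetric weights $\nu_a(0)=q$, $\nu_a(a_j)=p$ of Definition~\ref{lkjlsjodijpoiefd}. At the singular point one parametrizes both sides by $t$: the equations $u^p=t^p$ and $v^q=t^q$ give $u=\zeta t$, $v=\zeta' t$, so $\mathbf{g}$ is a local biholomorphism despite the ramification of both $\varphi$ and the branch of $\mathbf{f}_a^{-1}$. At a smooth point with $w_0\in R\setminus\{0,a\}$ and preimage $z_0\in R$, one has $z_0\neq 0$ (else $w_0\in\mathbf{f}_a(0)=\{a\}$), hence $\nu_a(w_0)=\nu_a(z_0)=p$ and divisibility holds. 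The two potentially problematic configurations are excluded structurally: forward-invariance $\mathbf{f}_a(R)\subseteq R$ rules out $z_0\in R$ while $w_0\notin R$; and the Misiurewicz hypothesis rules out $z_0\in R\setminus\{0\}$ being a preimage of $w_0=0$, since such a $z_0$ would produce an iteration path $0\to\cdots\to z_0\to 0$, i.e.\ a bounded periodic orbit of the critical point, contradicting strict pre-periodicity.

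\textbf{Global continuation and decomposition.} The local analysis shows that every branch of $\mathbf{g}$ at every $\tilde{\zeta}_0\in\mathbb{D}$ is a single-valued holomorphic germ (possibly with a critical point, but no pole or essential singularity); hence $\mathfrak{S}(\mathbf{g})$ is a holomorphic branched covering of $\mathbb{D}$, and its connected components $\mathfrak{S}(\mathbf{g}_\alpha)$ define the global analytic multifunctions $\mathbf{g}_\alpha$ of~\eqref{fdfsdfsdfsdf}. Since $\mathbb{D}$ is simply connected and each $\mathfrak{S}(\mathbf{g}_\alpha)\to\mathbb{D}$ is a branched covering without omitted values, the standard path-lifting theorem yields that every germ $\mathfrak{f}\in\mathfrak{S}(\mathbf{g}_\alpha)$ extends analytically along any curve in $\mathbb{D}$ starting at $\pi(\mathfrak{f})$. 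The main obstacle is the compatibility at the singular point $(0,a)$: the ramifications of $\varphi$ (of orders $p$ and $q$) must cancel exactly against the ramifications of the normalized branch, which is precisely what forces the weights $\nu_a(0)=q$, $\nu_a(a)=p$ in Definition~\ref{lkjlsjodijpoiefd}.
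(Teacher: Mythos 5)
Your local computation is sound, and it is in substance the same case analysis the paper performs: the admissible pairs of ramification indices are exactly those for which your divisibility condition holds, the configuration ``$z_0\in R$, $w_0\notin R$'' is excluded by forward invariance of $R$, and the configurations corresponding to the paper's cases $(q,p)$ and $(q,q)$ are excluded by strict pre-periodicity, just as in Lemma~\ref{$(e)$}. So the pointwise statement you establish -- through every pair $(\tilde\zeta_0,\tilde\eta_0)$ in the graph of $\mathbf{g}$ there passes a single-valued holomorphic local branch -- is correct, and your normalization argument at $(0,a)$ is a clean substitute for the winding-number bookkeeping.

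The gap is in the global step. From the pointwise statement you jump to ``$\mathfrak{S}(\mathbf{g})$ is a holomorphic branched covering of $\mathbb{D}$'' and then invoke ``the standard path-lifting theorem'' because $\mathbb{D}$ is simply connected. Neither move is justified: the sheaf projection $\pi:\mathfrak{S}(\mathbf{g}_\alpha)\to\mathbb{D}$ is only a local homeomorphism, and local homeomorphisms onto simply connected domains need not admit path lifting (the identity restricted to a proper subdomain already fails), while being a connected component of $\mathfrak{S}(\mathbf{g})$ is a priori weaker than being a component of the full sheaf $\mathfrak{S}_{\mathbb{D}}$, which is what Definition~\ref{kjhlksbldfhkowd} demands of a global analytic multifunction. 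The actual content of the theorem is a completeness assertion: a continuation $t\mapsto(f_t,\gamma(t))$ cannot die at a finite parameter, i.e.\ the values $f_t(\gamma(t))$ must converge to a point of $\mathbb{D}$ rather than drift to $\partial\mathbb{D}$ (a real danger, since $\varphi$ has infinite degree and $\varphi^{-1}$ of a compact set is never compact), and the limiting value must be matched by one of your local branches agreeing with $f_t$ on an overlap. The paper secures this by showing that over $\mathbb{D}\setminus B$ the multifunction is separable and is pulled back through the genuine covering $\varphi:\mathbb{D}\setminus B\to\mathbb{C}\setminus R$ from the algebraic function $\mathbf{f}_a^{-1}$, so covering-space lifting applies (Lemmas~\ref{$(a)$}, \ref{$(c)$}, \ref{$(d)$}, via Theorem~\ref{jdjfsfdwef}), and then by a limit-existence argument at branch points (Lemma~\ref{$(e1)$}) combined with the removability criterion of Theorem~\ref{lkhlkjhsiedf} (Lemmas~\ref{$(e)$}, \ref{$(g)$}). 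Your divisibility computation replaces the closed-loop monodromy count but does not by itself show that a continuation approaching a branch point has a limit, nor that the branches through the fiber over $\gamma(T)$ exhaust the graph of $\mathbf{g}$ near $\gamma(T)$; supplying these is where the work lies. (Incidentally, simple connectedness of $\mathbb{D}$ is not what continuation along curves needs; it is used only later, via the Monodromy Theorem, to produce the global branches of Theorem~\ref{ljksoijdfkwdfsed}.)
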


Constructing global branches of the correspondence on the complex plane is, in general, obstructed by the presence of the algebraic singularity at zero. Remarkably, \textcolor{black}{after lifting $\mathbf{f}_c^{-1}$ to the unit disk,} such branches can indeed be defined, as the following theorem shows.

\begin{thmm}[\bf Contracting lifts]\label{ljksoijdfkwdfsed}
Suppose that $a$ is a Misiurewicz point of the family $\mathbf{f}_c$ given by~\eqref{lkjdllkjhalsoed}.
Let $\mathbf{g}$ be the holomorphic multifunction of Theorem~\ref{gdsadgwed}, defined on the open unit disk.
Every branch $f$ of $\mathbf{g}$ has a unique extension to a global branch $F \colon \mathbb{D} \to \mathbb{D}$.
Moreover, $F$ is strictly contracting with respect to the Poincar\'e metric.
Since $\varphi \circ F$ is a branch of $\mathbf{f}_a^{-1} \circ \varphi$, the following diagram commutes:

\begin{equation}\label{sdfsdfskkdfdwlkjlksded}
    \begin{tikzcd}
        \mathbb{D} \arrow{r}{F}\arrow{d}[swap]{\varphi} & \mathbb{D} \arrow{d}{\varphi} \\
        (\mathbb{C}, \nu) \arrow{r}[swap]{\mathbf{f}_a^{-1}} & (\mathbb{C}, \nu)
    \end{tikzcd}
\end{equation}

\end{thmm}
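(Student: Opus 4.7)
The plan is to prove the theorem in three stages: a global extension of $f$ via monodromy, commutativity of diagram~\eqref{sdfsdfskkdfdwlkjlksded} by the identity principle, and the strict contraction via the Schwarz-Pick lemma combined with the ramification structure of the canonical orbifold.

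\textbf{Stage 1 (Global extension).} Let $f$ be any branch of $\mathbf{g}$. By Theorem~\ref{gdsadgwed}, the germ of $f$ at its base point belongs to $\mathfrak{S}(\mathbf{g}_{\alpha})$ for some index $\alpha$, and every such germ admits analytic continuation along every curve in $\mathbb{D}$ issuing from the base point. Since $\mathbb{D}$ is simply connected, the classical monodromy theorem forces this continuation to be single-valued, producing a holomorphic extension $F \colon \mathbb{D} \to \mathbb{C}$. Because $\mathbf{g}$ takes values in $\mathbb{D}$, so does $F$, giving $F \colon \mathbb{D} \to \mathbb{D}$. Uniqueness of $F$ is immediate from the identity principle.

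\textbf{Stage 2 (Commutativity).} On the domain of the original branch $f$, the identity $\varphi \circ F = \varphi \circ f = g \circ \varphi$ holds by the very definition of $\mathbf{g}$, where $g$ denotes the branch of $\mathbf{f}_a^{-1}$ corresponding to $f$. Both sides are holomorphic on all of $\mathbb{D}$, so the identity principle propagates the equality globally, giving the commutative diagram.

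\textbf{Stage 3 (Strict contraction).} By the Schwarz-Pick lemma, $F \colon \mathbb{D} \to \mathbb{D}$ is non-expanding in the Poincar\'e metric, and strictly so unless it is a conformal automorphism of $\mathbb{D}$. Suppose for contradiction that $F \in \mathrm{Aut}(\mathbb{D})$. Since $\varphi$ is a local isometry between the Poincar\'e metric on $\mathbb{D} \setminus B$ and the canonical orbifold metric on $\mathbb{C} \setminus R$ (Definition~\ref{lkjkjhgaiusdedd}), the relation $\varphi \circ F = g \circ \varphi$ would then force the corresponding branch $g$ of $\mathbf{f}_a^{-1}$ to act as a local isometry for the orbifold metric on any simply connected subset of $\mathbb{C} \setminus R$. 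However, this is incompatible with the ramification structure $\nu_a$: the correspondence $\mathbf{f}_a$ sends the critical point $0$ (of index $q$) to the ramified point $a$ (of index $p > q$), and near each $a_j$ the orbifold metric blows up at a rate $|z - a_j|^{1/\nu_a(a_j) - 1}$. A direct comparison of blow-up rates at the source $a$ and target $0$ of any branch of $\mathbf{f}_a^{-1}$ near $a$ shows that no such branch can preserve the orbifold metric. Hence $F$ cannot be an automorphism, and strict Poincar\'e contraction follows.

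\textbf{Main obstacle.} The delicate point is Stage 3: one must make the incompatibility between a putative orbifold isometry and the asymmetric indices $\nu_a(0) = q$, $\nu_a(a_j) = p$ fully rigorous, by pulling the density $\rho_2(z)|dz|$ back along the branches of $\mathbf{f}_a^{-1}$ and comparing it to $\rho_2$ itself. This is where the specific choice of $\nu_a$ in Definition~\ref{lkjlsjodijpoiefd}---tailored so that $\mathbf{f}_a$ strictly expands the canonical orbifold metric---is used in an essential way.
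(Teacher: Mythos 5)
Your Stages 1 and 2 follow essentially the paper's route: the extension comes from Theorem~\ref{gdsadgwed} (continuation of germs of $\mathbf{g}_\alpha$ along all curves) plus the monodromy statement for the simply connected disk (Corollary~\ref{kjhlkjhlskjed}), and commutativity of~\eqref{sdfsdfskkdfdwlkjlksded} does propagate by the identity principle --- though you should propagate the polynomial relation $\bigl(\varphi(F(z))\bigr)^p=(\varphi(z)-a)^q$ rather than the equation $\varphi\circ F=g\circ\varphi$, since a single branch $g$ of $\mathbf{f}_a^{-1}$ is not defined on all of $\varphi(\mathbb{D})=\mathbb{C}$, so "both sides are holomorphic on all of $\mathbb{D}$" is not literally available. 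The genuine problem is Stage 3.

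The blow-up-rate mismatch you invoke at the pair $(a,0)$ does not exist; in fact the indices in Definition~\ref{lkjlsjodijpoiefd} are chosen precisely so that these rates cancel. Near $a$ one has $\rho(w)\asymp|w-a|^{1/p-1}$, a branch $g$ of $\mathbf{f}_a^{-1}$ there satisfies $|g(w)|=|w-a|^{q/p}$ and $|g'(w)|\asymp|w-a|^{q/p-1}$, and near $0$ one has $\rho(z)\asymp|z|^{1/q-1}$; hence $\|g'(w)\|_\rho=|g'(w)|\,\rho(g(w))/\rho(w)\asymp|w-a|^{(q/p-1)+(q/p)(1/q-1)-(1/p-1)}=|w-a|^{0}$, which stays bounded away from $0$ and $\infty$. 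This exact cancellation is what makes the lift $F$ extend holomorphically across the branch points over $a$; no contradiction with an isometry can be extracted from it, so your Stage 3 as written fails. The true asymmetry --- and the paper's actual argument --- sits at the critical point $0$: since $a$ is Misiurewicz, $0$ is strictly preperiodic, so no $b\in\mathbf{f}_a^{-1}(0)$ lies in the postcritical set, whence $\nu_a(b)=1$ and $\varphi$ is unramified over $b$. Therefore at any $x\in\varphi^{-1}(0)$ the factorization $F=\psi\circ f^{-1}\circ\varphi$, with $\psi$ and $f^{-1}$ univalent and $\varphi$ of local degree $q\geq 2$ at $x$, gives $F'(x)=0$; so $F$ cannot be an automorphism of $\mathbb{D}$ and Schwarz--Pick yields the strict contraction. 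Equivalently, in your blow-up language the comparison must be made at source $0$ and unramified target $b$, where $\|g'(z)\|_\rho\asymp|z|^{1-1/q}\to 0$, which is the asymmetry that actually rules out an orbifold isometry.
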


The following theorem will be restated later as Theorem~\ref{poiaudfposiuwedfs}.

\begin{thmm}[\bf Subhyperbolicity] \label{alkjasdpoipoallskdf}
Suppose that $a$ is a Misiurewicz point for the family~\eqref{lkjdllkjhalsoed}. Then $\mathbf{f}_a$ expands the orbifold metric~$\rho$ by a uniform factor in a neighborhood of~$K_a$. More precisely, there exists an open set~$V$ containing~$K_a$ and a constant~$\eta \in (0, 1)$ such that $R \cap V$ coincides with the unique bounded critical orbit of~$\mathbf{f}_a$, and for every univalent branch~$g$ of~$\mathbf{f}_a^{-1}$ defined on a region $W \subset V$, we have
\[
\|g'(w)\|_{\rho} < \eta \quad \text{for all } w \in W \setminus R.
\]
\end{thmm}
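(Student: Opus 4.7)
My plan has three parts: constructing the neighborhood $V$, establishing a pointwise strict contraction through the lift picture of Theorems~\ref{gdsadgwed}--\ref{ljksoijdfkwdfsed}, and promoting this pointwise contraction to a uniform bound by compactness.

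\textbf{Step 1 (choice of $V$).} Because $a$ is a Misiurewicz point, Definition~\ref{dlkhaskjhdfjhkwefdc} forces $R\cap K_a$ to be exactly the finite set $\{0,z_0,\dots,z_{\ell+n-1}\}$ given by the unique bounded critical orbit. Lemma~\ref{alkjacbcbkhlpsdf} provides local finiteness of $R$, so each of these finitely many points admits a small closed disk meeting $R$ only at its center. The compact set obtained by removing the open disks from $K_a$ is disjoint from $R$, and a thin tubular neighborhood of it still avoids $R$; together with the original disks this produces a bounded open $V\supset K_a$ whose closure $\overline V$ satisfies $\overline V\cap R=R\cap K_a$.

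\textbf{Step 2 (pointwise contraction).} Fix $w\in W\setminus R$ and a univalent branch $g$ of $\mathbf{f}_a^{-1}$ on $W$. Choose any $\tilde w\in\varphi^{-1}(w)$. The germ at $\tilde w$ of $\varphi^{-1}\circ g\circ\varphi$ is a branch of the multifunction $\mathbf g$ of Theorem~\ref{gdsadgwed}, and by Theorem~\ref{ljksoijdfkwdfsed} it extends to a global self-map $F:\mathbb D\to\mathbb D$ that strictly contracts the Poincaré metric and satisfies $\varphi\circ F=g\circ\varphi$ near $\tilde w$. Since by Definition~\ref{lkjkjhgaiusdedd} the map $\varphi$ is a local isometry between the hyperbolic metric on $\mathbb D\setminus B$ and the orbifold metric $\rho$ on $\mathbb C\setminus R$,
\[
\|g'(w)\|_\rho=\|F'(\tilde w)\|_{\mathbb D}.
\]
Because $F$ is holomorphic and, by Theorem~\ref{ljksoijdfkwdfsed}, not a M\"obius automorphism, the strict Schwarz--Pick lemma gives $\|F'(\tilde w)\|_{\mathbb D}<1$ pointwise.

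\textbf{Step 3 (compactness).} Define $h:V\setminus R\to[0,1)$ by $h(w)=\max_i\|g_i'(w)\|_\rho$, the maximum over the (at most $p$) univalent local inverse branches $g_i$ of $\mathbf{f}_a^{-1}$ at $w$; any univalent $g$ as in the theorem agrees near each point with one of the $g_i$, so the desired bound will follow from $\sup h\leq\eta<1$. Step~2 yields $h<1$ pointwise, and since the $g_i$ vary holomorphically, $h$ is continuous on $V\setminus R$. At each of the finitely many $r\in R\cap V$, pick $\tilde r\in\varphi^{-1}(r)$ and the global lifts $F_i$ of the $g_i$: the values $\|F_i'(\tilde r)\|_{\mathbb D}$ are finite and $<1$, and provide the continuous limit of $\|g_i'(w)\|_\rho$ as $w\to r$, yielding a continuous extension of $h$ to all of $\overline V$. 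Since $\overline V$ is compact and $h<1$ everywhere, $\eta:=\max_{\overline V}h<1$ is the required uniform constant.

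\textbf{Expected obstacle.} The delicate point is the continuous extension of $h$ through the ramified points: the Euclidean derivative $|g_i'(r)|$ is finite, but the conformal factor of $\rho$ blows up at both $r$ and at $g_i(r)$, so the orbifold derivative is not directly controlled by Euclidean data. The lift picture sidesteps this entirely, since Theorem~\ref{ljksoijdfkwdfsed} gives $F_i$ globally defined and holomorphic on $\mathbb D$ with $F_i(\tilde r)\in\mathbb D$, so $\|F_i'(\tilde r)\|_{\mathbb D}$ is automatically finite and strictly less than $1$, and the $\Gamma$-invariance of this quantity makes the descent to $r$ well-defined. This is precisely where the structural content of Theorems~\ref{gdsadgwed}--\ref{ljksoijdfkwdfsed} enters the argument, and without the globality supplied by Theorem~\ref{ljksoijdfkwdfsed} the extension across $R\cap V$ would not be available.
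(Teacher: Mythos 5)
Your proposal is correct in substance and rests on the same underlying mechanism as the paper: lifting $\mathbf{f}_a^{-1}$ to the universal covering of the canonical orbifold via Theorems~\ref{gdsadgwed} and~\ref{ljksoijdfkwdfsed}, the Schwarz--Pick lemma for pointwise strict contraction, and compactness for uniformity. Where you genuinely diverge is in how uniformity near the ramified points is obtained. The paper routes the statement through Theorem~\ref{kljhlsdiedf}, whose proof near each $a_j$ depends on Lemma~\ref{lkhalosiddwesdfc} (the lifted disk $\tilde{\mathcal{D}}_j$ over a maximal disk around $a_j$ has compact closure in $\mathbb{D}$ and $\varphi$ is proper of degree $\nu_a(a_j)$ on it) and Lemma~\ref{ojpoisdfweadafd} (the family of lifts is finite, of cardinality $pq$, so strict contraction on the relatively compact $\tilde{\mathcal{D}}_1$ yields a uniform factor $\eta_j$), followed by a covering argument away from $R$. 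You instead work purely locally at a branch point $\tilde r\in\varphi^{-1}(r)$: the global lifts are holomorphic on all of $\mathbb{D}$, $\|F'\|_{\mathbb{D}}$ is continuous and strictly less than $1$ at $\tilde r$, and deck invariance makes the descended quantity well defined; this bypasses the relative-compactness content of Lemma~\ref{lkhalosiddwesdfc} and is arguably more economical. To make your Step~3 airtight you should state explicitly: (i) the identity $\|g'(w)\|_\rho=\|F'(\tilde w)\|_{\mathbb{D}}$ also requires $g(w)\notin R$, which follows from the forward invariance $\mathbf{f}_a(R)\subset R$ (equivalently, backward invariance of $\mathbb{C}\setminus R$); (ii) infinitely many lifts $F$ occur over a neighborhood of $\tilde r$ (one for each choice of value in the fiber of $\varphi$), but any two lifts of the same local branch differ by postcomposition with a deck transformation, hence have identical derivative norms, so $h$ is locally a maximum of finitely many continuous functions and only finitely many norm functions enter near $\tilde r$; (iii) at $r=a$ the local branches are permuted by the monodromy around $a$ and are not single valued on a punctured neighborhood, so the ``continuous extension of $h$'' should be weakened to the bound $\limsup_{w\to r}h(w)\le\max_i\|F_i'(\tilde r)\|_{\mathbb{D}}<1$, which is all the compactness argument requires. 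With these clarifications your argument is complete; note only that the paper's Theorem~\ref{kljhlsdiedf} also extends each local branch to a definite ball $B(\zeta_0,r_0)$, a feature your proof neither needs nor provides.
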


\begin{remark} \normalfont Notice that if $f$ is the local inverse of $g$ defined on $g(W)$, then \[\|f'(z)\|_{\rho}> \lambda\] for every $z$ in $g(W{\setminus} R)$, where $\lambda=1/\eta.$ 

\end{remark}

\gap \paragraph{Repelling cycles.} \label{gdasdfasdc} A forward orbit $(z_i)_0^{\infty}$ of the correspondence $\mathbf{f}_c$ is a \emph{cycle} of period $n$ if $z_i=z_{i+n}$  for every $i.$ Unless $z_i =0$  (the critical point) in which case $z_{i+1}=c$, there exists a unique univalent branch of $\mathbf{f}_a$ sending $z_i$ to $z_{i+1}.$ The composition of these (finitely many) branches along the cycle yields a univalent branch $f$ of $\mathbf{f}_a^n$ which has a fixed point at $z_0$. The  \emph{multiplier} is $\lambda = f'(z_0).$ Here the classical terminology applies, and the cycle is said to be \emph{repelling, geometrically attracting, indifferent, etc,} provided the same property is verified at the fixed point $z_0$ of $f$. A cycle that contains the critical point is called a \emph{critical cycle} or a \emph{super-attracting cycle.} 
  Notice that all known linearization results for analytic functions defined near a fixed point carry over naturally to correspondences through this association. 

Every point of a repelling cycle is a \emph{repelling periodic point.}

\begin{defi}[\bf Julia set]\label{adfadweerdccgqwe}\normalfont The closure of the set of repelling periodic points of $\mathbf{f}_c$ is the Julia set $J_c.$

\end{defi}

{\color{black}
\begin{remark}\label{asdfakjhsoidfsdf}\normalfont  By \cite[equation~(3)]{ETDS22}, the filled Julia set $K_c$ is given by the nested intersection
\[
K_c=\bigcap_{k\geq 0}\mathbf{f}_c^{-k}(D),
\]
where $D=\{\,|z|\le R\,\}$ is any sufficiently large disk. Thus $K_c$ is defined in complete analogy with the filled Julia set of a polynomial. An immediate consequence is that $K_c$ is compact and contains all cycles, including repelling ones. Hence the Julia set $J_c$, defined as the closure of the repelling cycles, satisfies $J_c\subset K_c$.

For polynomials one has $J_c=\partial K_c$. In contrast, except in the trivial case $c=0$ (where $J_0$ is the unit circle and $K_0$ the unit disk) in general $\partial K_c$ is a proper subset of $J_c$, with parts of $J_c$ lying in the interior of $K_c$. See \cite[Section~3]{BLS} for a more detailed discussion of the geometry of $K_c$ and $J_c$.

\end{remark}
}

The following theorem will later be reformulated as two separate statements: Theorem~\ref{lhlkjhapopoxooxdfs} and Theorem~\ref{fsdfawedsfasdeed}. As noted earlier, for a Misiurewicz point~$a$, the critical point is preperiodic to the cycle~$\alpha_a$, a fact we briefly recall here to clarify the next statement.

\begin{thmm}\label{lkjajhlkjhasdfoiqdfadf}
Suppose that $a$ is a Misiurewicz point for the family~\eqref{lkjdllkjhalsoed}. Then the critical point is preperiodic to a repelling cycle $\alpha_a$, which is contained in the filled Julia set $K_a$. Moreover, $J_a = K_a$, and
\[
K_a = \overline{\bigcup_{n \geq 0} \mathbf{f}_a^{-n}(0)}.
\]
\end{thmm}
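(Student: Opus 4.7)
The plan is to handle the three assertions in order, leveraging the orbifold expansion of Theorem~\ref{alkjasdpoipoallskdf} and the global contracting lifts of Theorem~\ref{ljksoijdfkwdfsed}. I first verify that $\alpha_a$ is repelling. Strict preperiodicity ensures $0 \notin \alpha_a$, so at each $z_{\ell+i}$ there is a unique univalent branch $f_i$ of $\mathbf{f}_a$ sending $z_{\ell+i}$ to $z_{\ell+i+1}$; their composition $f$ is a univalent branch of $\mathbf{f}_a^n$ fixing $z_{\ell}$. Let $g=f^{-1}$. Iterating Theorem~\ref{alkjasdpoipoallskdf} along the cycle gives $\|g'(w)\|_{\rho}\le\eta^n$ on a punctured neighborhood of $z_{\ell}$. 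Since $z_{\ell}\in R$ is ramified with index $p=\nu_a(z_{\ell})$, the orbifold density satisfies $\rho(z)\asymp|z-z_{\ell}|^{1/p-1}$ near $z_{\ell}$. Unwinding $\|g'(w)\|_{\rho}=\rho(g(w))|g'(w)|/\rho(w)$ and letting $w\to z_{\ell}$ yields $|g'(z_{\ell})|^{1/p}\le\eta^n<1$, hence $|f'(z_{\ell})|>1$. The cycle is therefore repelling; being bounded, $\alpha_a\subset K_a$, and by Definition~\ref{adfadweerdccgqwe} also $\alpha_a\subset J_a$.

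Next I prove $K_a=\overline{\bigcup_{n\ge 0}\mathbf{f}_a^{-n}(0)}$. The inclusion $\supseteq$ is immediate: $0\in K_a$ since its forward orbit enters $\alpha_a\subset K_a$, and any preimage of a point in $K_a$ inherits at least one bounded forward orbit; induction and closedness of $K_a$ complete the argument. For $\subseteq$, fix $z\in K_a$ and an open $U\ni z$ contained in the sub-hyperbolic neighborhood $V$ of Theorem~\ref{alkjasdpoipoallskdf}. Assume for contradiction that no $\mathbf{f}_a^n(U)$ contains $0$. Then every branch of $\mathbf{f}_a^{-1}$ is univalent on each $\mathbf{f}_a^n(U)$, and lifting through $\varphi$ expresses backward iteration as composition of the global contractions $F:\mathbb{D}\to\mathbb{D}$ of Theorem~\ref{ljksoijdfkwdfsed}. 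Dually, the forward iterates expand uniformly by $\lambda=1/\eta>1$ in the orbifold metric as long as they remain in $V$. Since $R\cap V$ reduces to the finite bounded critical orbit and the orbifold density integrates finitely across ramified points, $V$ has finite orbifold diameter around $K_a$. Consequently some $\mathbf{f}_a^n(U)$ must leave $V$; tracking the first escape via the distortion estimate supplied by the disk picture shows that $\mathbf{f}_a^n(U)$ contains a Euclidean ball of fixed radius centered on $K_a$, which must then contain $0$, contradicting the assumption.

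For $J_a=K_a$, given $z\in K_a$ and $U\ni z$, the previous step places some $w\in U\cap\mathbf{f}_a^{-m}(0)$. I produce a repelling periodic point of $\mathbf{f}_a$ inside $U$ by a Banach fixed-point argument in the disk: for $N$ sufficiently large, a composition $F_{i_1}\circ\cdots\circ F_{i_N}$ corresponding to an appropriate sequence of local inverses of $\mathbf{f}_a$ carries a small hyperbolic disk around a lift $\tilde w$ of $w$ strictly into itself, producing a fixed point $\tilde\zeta$ that projects to a periodic point $\zeta$ of $\mathbf{f}_a^N$. Since the chosen composition has $\zeta\notin R$ (a generic choice of preimage branches avoids the finite post-critical set), the blow-up analysis of the first step degenerates to the identity $|\zeta'|_{\rho}=|\zeta'|_{\mathrm{Euclidean}}$, and the orbifold contraction factor $\le\eta^N$ forces the Euclidean multiplier to exceed one. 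Thus $\zeta$ is a repelling periodic point in $U$, so $z\in J_a$; together with $J_a\subset K_a$, this completes the proof.

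The main obstacle is the second step: unlike the polynomial case, where Montel's theorem applied to an infinite omitted set directly forces forward iteration to cover the critical point, the multi-valued nature of $\mathbf{f}_a$ precludes any direct appeal to normal families on $\mathbb{C}$. The substitute --- controlling forward iteration through the disk model of Theorems~\ref{gdsadgwed} and~\ref{ljksoijdfkwdfsed}, and managing the geometry near ramified points via the local normal form $z=a_j+w^{\nu_a(a_j)}$ of the branched covering $\varphi$ --- is where the technical heart of the argument resides.
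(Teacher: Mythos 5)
Your first step (the cycle $\alpha_a$ is repelling) is correct and is essentially the paper's argument: compose the univalent branches along the cycle, apply the sub-hyperbolicity theorem, and use the local expression $\rho(z)\asymp|z-z_\ell|^{1/d-1}$ of the orbifold density at the ramified cycle point to pass from the orbifold multiplier to the Euclidean one. The other two steps, however, each contain a genuine gap. In the proof that every disk $U$ meeting $K_a$ has a forward image containing $0$, your escape argument ends with a non sequitur: even granting that uniform orbifold expansion forces some image $\mathbf{f}_a^n(U)$ to leave $V$ and (after a distortion argument you do not actually supply) to contain a Euclidean ball of definite radius centered at a point of $K_a$, there is no reason such a ball contains the specific point $0$; closing that loop would require a further covering statement that is essentially equivalent to what you are trying to prove. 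The paper's route (Lemma \ref{fsdfsdwedf}) is different and does work: if no forward image contains $0$, the single-valued branches $f_n$ of $\mathbf{f}_a^n$ obtained by analytic continuation along a bounded orbit omit $\{0,a\}$, hence form a normal family by Montel, and normality is then contradicted using the orbifold expansion (treating separately the case where the orbit accumulates on $R\cap K_a$ and the case where it accumulates at a point of $K_a\setminus R$). Your closing remark that the multivaluedness ``precludes any direct appeal to normal families'' is therefore not accurate; Montel is applied to the continued branches, not to the correspondence itself.

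The more serious gap is in your third step. You assert that for large $N$ some composition $F_{i_1}\circ\cdots\circ F_{i_N}$ of lifted inverse branches maps a small hyperbolic disk around a lift of $w\in U\cap\mathbf{f}_a^{-m}(0)$ strictly into itself, but no mechanism is given for this return: a backward orbit of $w$ has no reason to come back near $w$, and a holomorphic self-map of $\mathbb{D}$ that strictly contracts the hyperbolic metric need not have any fixed point at all (its Denjoy--Wolff point may lie on the boundary), so ``contracting'' alone does not yield a periodic point, let alone one lying over $U$. Producing the return is precisely the technical heart of the paper's proof of Theorem \ref{fsdfawedsfasdeed}: one iterates $U$ forward until a branch covers $0$, passes in one step to a neighborhood of $a$, uses a branch $h$ of an iterate carrying $a$ to the repelling point $\zeta_0$ together with the K\"onigs linearization there, and separately uses the uniformly contracted backward images $g_k(U_1)$ (whose $\rho$-diameters tend to $0$) to bring a copy of $U$ into the linearizing domain; slit conformal sectors are needed so that the relevant inverse branches stay single-valued around the ramified cycle point and around the singular value $a$. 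The resulting cycle of maps \eqref{afsdfadfasewdsdfs} sends a hyperbolic domain into a relatively compact subset of itself, and only then does Schwarz--Pick give the attracting fixed point that projects to a repelling periodic point of $\mathbf{f}_a$ meeting $U$. Nothing in your sketch substitutes for this homoclinic construction, so as written the density of repelling periodic points in $K_a$ is not established.
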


Our proof that $J_a = K_a$ in this setting relies on the subhyperbolicity of $\mathbf{f}_a$ and the construction of homoclinic orbits. Alternative approaches, obtained by straightforward extensions of the standard arguments used to establish $J_a = K_a$ for polynomials, fail in the case of correspondences.

\gap
{\color{black}
\paragraph{\bf Outline.} 
The structure of this paper is as follows. Sections~\ref{asdfasdfbasdfcccds} and~\ref{adfasdflkhblkjhsoidf} present the  main results and related topics.  
Section~\ref{kjhkjhsdoiwe} discusses analytic continuation; while it contains no new results, it should not be skipped, as its primary purpose is to establish precise and consistent notation.
The proof of Theorem~\ref{gdsadgwed} is given in Section~\ref{asdfadsfqlkhdoihfqd}, and Theorem~\ref{ljksoijdfkwdfsed} is proved in Section~\ref{adfaoihpoiapoiasudfasdf}.  
Section~\ref{asdfasdfohasdfasd} establishes uniform contraction of branches of $\mathbf{f}_a^{-1}$ on compact sets, from which Theorem~\ref{alkjasdpoipoallskdf} follows; its proof appears in Section~\ref{asdfapoiuasdfasdrweg}.  
Section~\ref{asdfapoiuasdfasdrweg} also contains the proof of Theorem~\ref{lkjajhlkjhasdfoiqdfadf}, the final result of this paper.}

\section{The sheaf of germs of analytic functions}  \label{kjhkjhsdoiwe}

The content of this section summarizes key results from the classical exposition in L. Ahlfors \cite[Chapter~8]{Ahlfors}. While it is primarily based on that source, our presentation has the advantage of being more concise, allowing the reader to grasp the essential notation and ideas in just a few pages. Note that the notation used here differs slightly from that of Ahlfors.
In addition, we have introduced new definitions and related topics that do not appear in Ahlfors (or, to our knowledge, in any other standard exposition of the subject). For these results, we provide brief  proofs.

 \emph{Unless otherwise stated, every $U\subset \mathbb{C}$ is supposed to be a region, that is, a nonempty and connected open subset of the complex plane.}

\gap \paragraph{Germs.}
   For  a given $z\in \mathbb{C},$ let $S_z$ denote the space of all holomorphic maps locally defined in a neighborhood of $z$, with values in $\mathbb{C}.$ 
We define an equivalence relation on $S_z$  by setting $f\sim g$ provided $f$ and $g$ coincide on some small neighborhood of $z.$ Every element of $\mathfrak{S}_z=S_z/\mathord \sim$ is a \emph{germ of \textcolor{black}{an} analytic function at $z.$}  Let $U\subset \mathbb{C}$ be a region. Then   $\mathfrak{S}(U) = \cup_{z\in U} \mathfrak{S}_z$  is the \emph{sheaf of germs of analytic functions on $U,$} and the elements of $\mathfrak{S}_U$ are denoted by $\mathfrak{f},  \mathfrak{g}, \ldots$ If we need to specify that $\mathfrak{f}$ is a germ at $z$ represented by some holomorphic map $f,$ then we write $\mathfrak{f}=(f, z).$ Sometimes we denote $\mathfrak{S}(U)$ by $\mathfrak{S}_U.$ The sheaf $\mathfrak{S}_U$ is a topological space: if $\mathfrak{f}=(f, z)$, then a local basis at $\mathfrak{f}$ is determined by all sets of the form $\tilde{V} = \{ (f, \zeta) : \zeta \in V\}$, where $V$ is a region contained in the domain of $f$ and $z\in V.$ We define the continuous map $\pi: \mathfrak{S}_U \to U$ by $\pi(\mathfrak{f})=z$ if $\mathfrak{f}=(f,z).$

\gap \paragraph{Global analytic functions.} The connected components $\mathfrak{S}_{\alpha}(U)$ of $\mathfrak{S}(U)$ are Riemann surfaces. A local chart at $\mathfrak{f} \in \mathfrak{S}_{\alpha}(U)$ is simply a restriction of the projection $\pi.$ The restriction of $\pi$ to $\mathfrak{S}_{\alpha}(U)$ is denoted by $\pi_{\alpha}.$ The map $\tilde{\mathbf{f}}_{\alpha}: \mathfrak{S}_{\alpha}(U) \to \mathbb{C}$ which assigns every germ $(f,z)$ to $f(z)$ is evidently holomorphic on the Riemann surface $\mathfrak{S}_{\alpha}(U).$  By definition, each $\tilde{\mathbf{f}}_{\alpha}$ is a \emph{global analytic function}, and every germ of analytic function $\mathfrak{f}$ determines a unique global analytic function $\tilde{\mathbf{f}}_{\alpha}$ such that $\mathfrak{f} \in \mathfrak{S}_{\alpha}(U).$

\gap \paragraph{Analytic continuation.}\label{lkjslfjkdwdf} Given a continuous curve $\gamma: [a, b] \to U$ and a germ $\mathfrak{f}$ of a holomorphic function at $\gamma(a)$, an \emph{analytic continuation of $\mathfrak{f}$ along $\gamma$} is a continuous curve $\tilde{\gamma}: [a, b] \to \mathfrak{S}(U)$ that projects onto $\gamma,$ that is,  $\pi (\tilde{\gamma}(t)) = \gamma(t)$, for every $t.$ \textcolor{black}{It is not necessary to explicitly describe} the $\tilde{\mathbf{f}}_{\alpha}$ associated with the continuation $\tilde{\gamma}$, since $\{\tilde{\gamma}\}$ must be contained in a unique component $\mathfrak{S}_{\alpha}(U).$
 Two analytic continuations along the same curve are either identical or differ for every $t.$ If the analytic continuation exists, then the first germ $\mathfrak{g}_1=\tilde{\gamma}(a)$ uniquely determines the analytic continuation along $\gamma.$ Hence $\tilde{\gamma}$ is \emph{the analytic continuation of $\mathfrak{g}_1$} along $\gamma.$ In the same situation, we can say that $\mathfrak{g}_1$ and $\mathfrak{g}_2= \tilde{\gamma}(b)$ are \emph{joined by an analytic continuation} or that  the \emph{continuation of $\mathfrak{g}_1$ along $\gamma$ leads to the germ $\mathfrak{g}_2$ at $\gamma(b).$} For example, every pair of germs in a component $\mathfrak{S}_{\alpha}(U)$ can be joined by an analytic continuation $\tilde{\gamma}$.

\begin{defi}\label{jhlkjhsoieuhdsd} \normalfont We say that a continuous curve $\zeta$ in $\mathbb{C}$ defined on $[a, b]$  \emph{is given by an analytic continuation of a germ $\mathfrak{f}$ along another curve} $\gamma$ defined on $[a, b]$  if $$ \zeta(t) = f_t(\gamma(t)),$$ for every $t\in [a, b]$, where $f_t$ is a holomorphic map locally defined at $\gamma(t)$ such that \[\tilde{\gamma}(t) = (f_t, \gamma(t))\] is an analytic continuation of $\mathfrak{f}$ along $\gamma$ with $\tilde{\gamma}(a) = \mathfrak{f}.$ Equivalently, we may also say that the \emph{analytic continuation of $\mathfrak{f}$ along $\gamma$ yields $\zeta.$ }
\end{defi}

\gap \paragraph{Holomorphic multifunctions.}\label{lkoikdkdkciied}  By a \emph{holomorphic multifunction} $\mathbf{f}$ we mean a set-valued function that sends every point $z$ of $U$ to a subset $\mathbf{f}(z)$ of $\mathbb{C}$ in such a way that, whenever $w_0\in \mathbf{f}(z_0)$, there exists a holomorphic map $f: V \to \mathbb{C}$ defined on a region, called a \emph{branch} of $\mathbf{f}$, such that $f(z_0)=w_0$ and $f(z) \in \mathbf{f}(z),$ for every $z\in V.$  \textcolor{black}{If we wish to make the domain of $\mathbf{f}$ explicit,} we write $\mathbf{f}\colon U \to \mathbb{C}.$

(The usage of boldface letters avoids misinterpretations with single-valued holomorphic maps).

Any  global analytic function $\tilde{\mathbf{f}}_{\alpha}$  gives rise to a holomorphic multifunction \[\mathbf{f}_{\alpha}(z) = \tilde{\mathbf{f}}_{\alpha}(\pi_{\alpha}^{-1}(z))\] defined on $U.$ Given a holomorphic multifunction $\mathbf{g}: U \to \mathbb{C}$, the associated \emph{space of germs} $\mathfrak{S}(\mathbf{g})$ consists of all germs $(g, z)$ such that $g$ is a branch of $\mathbf{g}$ and $z$ is in the domain of $\mathbf{g}.$ It should be noticed that every branch is holomorphic and its domain is connected.

\begin{defi}[\bf Global analytic multifunction]\label{kjhlksbldfhkowd}\normalfont A holomorphic multifunction $\mathbf{g}: U \to \mathbb{C}$ is a \emph{global analytic multifunction} if $\mathfrak{S}(\mathbf{g})$ is a connected component of  $\mathfrak{S}_U.$ Hence $\mathfrak{S}(\mathbf{g})$ is the Riemann surface of $\mathbf{g}.$

\end{defi}

\begin{remark}\label{sdfojopwijdfdf} \normalfont  It is easy to show that every global analytic multifunction coincides with some $\mathbf{f}_{\alpha}.$  We do not know whether every $\mathbf{f}_{\alpha}$ is a global analytic multifunction. Indeed, the condition $\mathfrak{S}(\mathbf{f}_{\alpha}) = \mathfrak{S}_{\alpha}(U)$ is not a direct consequence of $\mathbf{f}_{\alpha} = \tilde{\mathbf{f}}_{\alpha}\circ \pi_{\alpha}^{-1}.$ 
\end{remark}

\begin{defi}[\bf Separable multifunction] \label{asdfasdfwdijpois} \normalfont A holomorphic multifunction $\mathbf{g}: U \to \mathbb{C}$ is separable if there exists a locally finite set of exceptional points $E=E({\mathbf{g}})\subset U$ \textcolor{black}{such that any point $x\in U {\setminus} E$} has a connected neighborhood $V \subset U {\setminus} E$ which is the domain of a family $\mathcal{F}$ of branches $f: V \to \mathbb{C}$, with pairwise disjoint images, such that any other local branch $h: W \to \mathbb{C}$ defined on a region $W \subset V$ is the restriction $h=f|_W,$ for some $f\in \mathcal{F}.$
\end{defi}

\begin{thm} If $\mathbf{f}_{\alpha}$ is separable, then $\mathbf{f}_{\alpha}$ is a global analytic multifunction. 

\end{thm}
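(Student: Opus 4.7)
The goal is to prove the equality $\mathfrak{S}(\mathbf{f}_{\alpha}) = \mathfrak{S}_{\alpha}(U)$; once this is established, the fact that $\mathfrak{S}_{\alpha}(U)$ is a connected component of $\mathfrak{S}(U)$ gives exactly the definition of a global analytic multifunction. The inclusion $\mathfrak{S}_{\alpha}(U) \subset \mathfrak{S}(\mathbf{f}_{\alpha})$ is the easy half: given a germ $(f, z_0) \in \mathfrak{S}_{\alpha}(U)$ represented by a holomorphic map $f \colon V \to \mathbb{C}$ on a connected neighborhood $V$ of $z_0$, continuity of $z \mapsto (f,z)$ and connectedness of $V$ force all germs $(f, z)$ to remain in $\mathfrak{S}_{\alpha}(U)$; hence $f(z) = \tilde{\mathbf{f}}_{\alpha}((f,z)) \in \mathbf{f}_{\alpha}(z)$ on $V$, showing $f$ is a branch of $\mathbf{f}_{\alpha}$.

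For the reverse inclusion I would fix a branch $f \colon V \to \mathbb{C}$ of $\mathbf{f}_{\alpha}$ and a point $z_0 \in V$, and show $(f, z_0) \in \mathfrak{S}_{\alpha}(U)$. The first step is to work at a point $z \in V \setminus E$, where $E = E(\mathbf{f}_{\alpha})$ is the locally finite exceptional set given by separability. By definition of $\mathbf{f}_{\alpha}$, we can pick a germ $\mathfrak{h}_z = (h_z, z) \in \pi_{\alpha}^{-1}(z) \subset \mathfrak{S}_{\alpha}(U)$ with $h_z(z) = f(z)$. Then both $f$ and $h_z$ are local branches of $\mathbf{f}_{\alpha}$ near $z$, so separability provides a connected neighborhood $V' \subset U \setminus E$ of $z$ and a family $\mathcal{F}$ of branches on $V'$ with pairwise disjoint images such that, after shrinking, $f|_{V'} = g|_{V'}$ and $h_z|_{V'} = g'|_{V'}$ for some $g, g' \in \mathcal{F}$. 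The equality $g(z) = f(z) = h_z(z) = g'(z)$ together with the disjoint-image condition forces $g = g'$; consequently $(f,z) = (h_z, z) \in \mathfrak{S}_{\alpha}(U)$.

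Thus the continuous section $z \mapsto (f, z)$ of $\mathfrak{S}(U)$ over $V$ lands in $\mathfrak{S}_{\alpha}(U)$ on the dense open subset $V \setminus E$. Since $\mathfrak{S}(U)$ is locally homeomorphic to $\mathbb{C}$ through $\pi$ and its connected components are both open and closed, and since $V$ is connected, the image of this section is contained in a single connected component of $\mathfrak{S}(U)$; meeting $\mathfrak{S}_{\alpha}(U)$, it must lie entirely in $\mathfrak{S}_{\alpha}(U)$. In particular $(f, z_0) \in \mathfrak{S}_{\alpha}(U)$, completing the reverse inclusion.

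\textbf{Main obstacle.} The delicate point is the identification $(f, z) = (h_z, z)$ at a non-exceptional point: knowing merely that $f(z) = h_z(z)$ does not imply that the two holomorphic functions agree on a neighborhood of $z$, since a priori different sheets of the multifunction could meet at $z$. The disjoint-image clause in the definition of separability is precisely what rules this out, so the argument would collapse without it. Everything else is a standard sheaf-theoretic continuation argument: the separability hypothesis does the real work exactly at this step.
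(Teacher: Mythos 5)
Your proof is correct and follows essentially the same route as the paper's: both reduce the theorem to the equality $\mathfrak{S}(\mathbf{f}_{\alpha}) = \mathfrak{S}_{\alpha}(U)$, use the disjoint-image clause of separability to identify the germ of a branch of $\mathbf{f}_{\alpha}$ at a non-exceptional point with a germ already lying in $\mathfrak{S}_{\alpha}(U)$, and then propagate membership over the whole (connected) domain of the branch via the clopen components of the sheaf space. The only cosmetic difference is that the paper first shrinks the branch's domain so that it contains at most one exceptional point and tests a single auxiliary point, whereas you verify membership on all of $V \setminus E$; this changes nothing essential.
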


\begin{proof} It suffices to show that $\mathfrak{S}(\mathbf{f}_{\alpha}) \subset \mathfrak{S}_{\alpha}(U)$. The other inclusion is immediate.  We will show that every $\mathfrak{g} \in \mathfrak{S}(\mathbf{f}_{\alpha})$ belongs to $\mathfrak{S}_{\alpha}(U).$ Without loss of generality, we may assume that  $\mathfrak{g}=(g, z_0)$, for some holomorphic branch $g: \Omega \to \mathbb{C}$  of $\mathbf{f}_{\alpha}$ defined on a region $\Omega \subset U$ such that $z_0$ is the only point of $E$ in $\Omega$   (see Definition \ref{asdfasdfwdijpois}). Choose $z_1 \neq z_0$ in $\Omega.$ Since $\mathbf{f}_{\alpha}$ is separable, there exists a connected neighborhood $V\subset \Omega{\setminus} \{ z_0\}$ of $z_1$ which is the domain of a family $\mathcal{F}$ of branches of $\mathbf{f}_{\alpha}$ with pairwise disjoint images satisfying the conditions of Definition \ref{asdfasdfwdijpois}. Hence $\mathbf{f}_{\alpha}(V)$ is given by the disjoint union of all $f(V)$ such that $f\in \mathcal{F}.$ Since $g(V)$ is connected, it must be contained in some $f_j(V)$, with $f_j \in \mathcal{F}.$ This implies $g(z) = f_j(z)$, for every $z\in V.$ Let $S_{g}$  be the subset of the sheaf given by all $(g, z)$ such that $z\in \Omega.$ Clearly, this set is connected in the sheaf topology, and we will show that it intersects $\mathfrak{S}_{\alpha}(U),$ which is enough to conclude that $S_{g}$ is contained in $\mathfrak{S}_{\alpha}(U).$ 
In order to prove this assertion, we first notice that $(f_j, z)$ is a germ in $\mathfrak{S}_{\alpha}(U)$, for every $z\in V.$ Indeed, using the definition of $\mathbf{f}_{\alpha}=\tilde{\mathbf{f}}_{\alpha}\circ \pi_{\alpha}^{-1}$ and the fact that $f_j$ is a branch of $\mathbf{f}_{\alpha}$, for any $z\in V$ there exists a holomorphic map $g_z$ locally defined at $z$ such that $(g_z, z) \in \mathfrak{S}_{\alpha}(U)$ and $f_j(z) = g_z(z).$ Let $W_z\subset V$ be a connected neighborhood of $z$ which is contained in the domain of $g_z.$ Again, it follows from the definition of $\mathbf{f}_{\alpha}$ that $g_z: W_z \to \mathbb{C}$ is a branch of $\mathbf{f}_{\alpha}.$  Since $\mathbf{f}_{\alpha}$ is separable, $g_z|_{W_z}=h_z|_{W_z}$, for some $h_z \in \mathcal{F}$. The maps in $\mathcal{F}$ have pairwise disjoint images over $V,$ and since $h_z(z) = f_j(z),$ it follows that $h_z=f_j$, for any $z\in V.$ Hence $$(f_j, z) = (h_z, z)=(g_z, z) \in \mathfrak{S}_{\alpha}(U), $$ for every $z\in V.$  Since $(g, z) = (f_j, z)$ belongs to the connected component $\mathfrak{S}_{\alpha}(U)$, it follows that $S_g$ intersects $\mathfrak{S}_{\alpha}(U)$, and since $S_{g}$ is connected, $S_g$ is contained in $\mathfrak{S}_{\alpha}(U)$.  We conclude that $(g, z_0)$ belongs to $S_g \subset \mathfrak{S}_{\alpha}(U).$ Hence $\mathfrak{g} 
\in \mathfrak{S}_{\alpha}(U)$, as desired. 
\end{proof}


\gap \paragraph{Examples.} The elementary multifunction $\log(z)$ is a global analytic multifunction defined on $\mathbb{C}^*.$ Every irreducible complex polynomial in two complex variables $P(z, w)$ determines a global analytic multifunction $\mathbf{g}(z) =\{w : P(w, z)=0\}.$  In this case we have to remove from $\mathbb{C}$ a locally finite set of (algebraic) singularities (see page \pageref{khskljhdfw}). Any $\mathbf{g}$ arising from some $P(z, w)$ in this way is an \emph{algebraic function.}\footnote{More appropriately, an \emph{algebraic multifunction}. However, the term algebraic function is standard, see \cite[p. 300]{Ahlfors}}   If $P(z, w)$ is not irreducible then $\mathbf{g}$ is no longer a global analytic multifunction. Nevertheless, $\mathfrak{S}(\mathbf{g})$ is the union of finitely many connected components $\Lambda_i$ of the sheaf $\mathfrak{S}_{U}$, where $U=\mathbb{C} {\setminus}B$ and $B$ is the set of singularities.  This is the case of the correspondence $(w-c)^2 =z^4,$ which is completely determined by two global branches: $w=z^2 +c$ and $w=-z^2 +c.$ The singularity at $0$ is removable in this case.

\noindent Recall from Definition \ref{asdfasdfwdijpois} that $E=E(\mathbf{g})$ is the set of exceptional points of a separable holomorphic multifunction $\mathbf{g}.$

\begin{thm}\label{jdjfsfdwef} 
Suppose that $\mathbf{g}: U \to \mathbb{C}$ is a separable holomorphic multifunction with $E(\mathbf{g})=\emptyset$ and let $(\gamma, \eta)$ be a pair of continuous curves defined on $[a, b]$, with $\{ \gamma\} \subset U$ and $\eta(t) \in \mathbf{g}(\gamma(t))$, for every $t\in [a, b].$ Then for every $t$ there exists a unique  germ \[\mathfrak{f}_t=(f_t, \gamma(t))\] determined by a local branch $f_t$ of $\mathbf{g}$ at $\gamma(t)$ which sends $\gamma(t)$ to $\eta(t).$ Moreover, the curve $\tilde{\gamma}(t)= \mathfrak{f}_t$ is continuous on $[a,b]$. Hence $\eta$ is given by the analytic continuation of $\mathfrak{f}_a$ along $\gamma.$ 
  \end{thm}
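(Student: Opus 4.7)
My plan is to exploit the local trivialization provided by separability. For each $t_0\in[a,b]$, I apply Definition~\ref{asdfasdfwdijpois} at $\gamma(t_0)\in U$ (using $E(\mathbf{g})=\emptyset$) to obtain a connected neighborhood $V_{t_0}\subset U$ and a family $\mathcal{F}_{t_0}$ of branches $f\colon V_{t_0}\to\mathbb{C}$ with pairwise disjoint images, into which every local branch of $\mathbf{g}$ on $V_{t_0}$ embeds by restriction. First I will establish the uniqueness of $\mathfrak{f}_t$: since $\eta(t)\in\mathbf{g}(\gamma(t))$, some local branch of $\mathbf{g}$ sends $\gamma(t)$ to $\eta(t)$, and by separability it is the restriction of an element $f_t\in\mathcal{F}_{t_0}$ (provided $\gamma(t)\in V_{t_0}$); any two such candidates would share the value $\eta(t)$ in their images and therefore coincide by disjointness, so the germ $\mathfrak{f}_t=(f_t,\gamma(t))$ is uniquely determined.

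Next, for continuity at $t_0$, I would pick $\delta_1>0$ so that $\gamma(t)\in V_{t_0}$ for $|t-t_0|<\delta_1$, and invoke the open mapping theorem for the non-constant holomorphic map $f_{t_0}$ on the connected open set $V_{t_0}$ to obtain a neighborhood $W$ of $\eta(t_0)=f_{t_0}(\gamma(t_0))$ with $W\subset f_{t_0}(V_{t_0})$. Pairwise disjointness then yields $W\cap f(V_{t_0})=\emptyset$ for every $f\in\mathcal{F}_{t_0}\setminus\{f_{t_0}\}$. Shrinking further so that $\eta(t)\in W$ (continuity of $\eta$), and writing $\eta(t)=f_j(\gamma(t))$ for some $f_j\in\mathcal{F}_{t_0}$ via separability, the disjointness forces $f_j=f_{t_0}$, hence $\eta(t)=f_{t_0}(\gamma(t))$. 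Uniqueness then gives $\tilde\gamma(t)=(f_{t_0},\gamma(t))$ on a neighborhood of $t_0$, and continuity at $t_0$ is immediate from the sheaf topology, whose local basis at $(f_{t_0},\gamma(t_0))$ consists of the sets $\{(f_{t_0},z):z\in N\}$ with $N$ a region in $V_{t_0}$. Finally, $\tilde\gamma\colon[a,b]\to\mathfrak{S}(U)$ is continuous, projects onto $\gamma$ and satisfies $\tilde\gamma(a)=\mathfrak{f}_a$, so by \S\ref{lkjslfjkdwdf} it is the analytic continuation of $\mathfrak{f}_a$ along $\gamma$; Definition~\ref{jhlkjhsoieuhdsd} then exhibits $\eta$ as the curve obtained from this continuation.

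The main obstacle I anticipate is the open mapping step used to isolate $\eta(t_0)$ inside $f_{t_0}(V_{t_0})$: it uses tacitly that $f_{t_0}$ is non-constant. A degenerate constant branch equal to $\eta(t_0)$ would force a separate argument showing $\eta$ is locally constant at $t_0$, based on continuity of each $f\in\mathcal{F}_{t_0}\setminus\{f_{t_0}\}$ together with their values at $\gamma(t_0)$ lying away from $\eta(t_0)$; a locally finite hypothesis on $\{f(V_{t_0}):f\in\mathcal{F}_{t_0}\}$ near $\eta(t_0)$ would suffice. For the algebraic correspondences $(w-c)^q=z^p$ that motivate this paper no branch is constant, so the open mapping argument applies directly and the plan above yields the theorem without further modification.
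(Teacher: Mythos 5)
Your proof is correct and follows essentially the same route as the paper: separability gives local families of branches with pairwise disjoint images, from which the branch tracking $\eta$ is shown to be locally constant along $\gamma$, and continuity of $\tilde{\gamma}$ is then immediate in the sheaf topology; your open-mapping/disjointness argument simply makes explicit the local agreement $f_t = f_s$ on overlaps that the paper asserts via a compactness/uniform-$\epsilon$ statement. The constant-branch caveat you raise is immaterial for the paper's applications, where every local branch (built from univalent branches of $\mathbf{f}_a^{-1}$ and of $\varphi^{-1}$) is non-constant.
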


\begin{proof}  Since $\{\gamma\}$ is compact and $E=\emptyset$,  the following property holds for some $\epsilon >0$: for any $t \in [a,b],$ there exists a unique local branch $f_t$ defined on the open ball $D_t$ centered at $\gamma(t)$, with radius $\epsilon$, such that $f_t$ sends $\gamma(t)$ to $\eta(t)$. There exists $\delta >0$ such that $D_t \cap D_s$ is  a region and $f_t = f_s$ on $D_t \cap D_s$ if $|t-s| < \delta$ on $[a, b].$ Hence $\tilde{\gamma}(t) = (f_t, \gamma(t))$ is a continuous path $[a, b] \to \mathfrak{S}(U),$ which is evidently an analytic continuation satisfying the conditions of the statement. 
\end{proof}

\begin{thm}[\bf Monodromy] \label{gjdsdgwe}   Let  $U \subset \mathbb{C}$ be a region. Assume that $\mathfrak{S}_{\alpha}(U)$ is a component of the sheaf  such that any germ $\mathfrak{f}$ in $\mathfrak{S}_{\alpha}(U)$ can be continued along any curve  in $U$ starting at $\pi(\mathfrak{f}).$ Let $\gamma_1$  and $\gamma_2$ be homotopic curves in $U$, defined on $[a, b]$, with $\gamma_1(a)=\gamma_2(a)$ and $\gamma_1(b)=\gamma_2(b).$   Then the continuations of a given $\mathfrak{f} \in \mathfrak{S}_{\alpha}(U)$ with $\pi(\mathfrak{f})=\gamma_1(a)$  along $\gamma_1$ and $\gamma_2$ lead to the same germ at $\gamma_1(b).$ \end{thm}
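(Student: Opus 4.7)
Let $H \colon [a,b] \times [0,1] \to U$ be a continuous homotopy with $H(t,0) = \gamma_1(t)$, $H(t,1) = \gamma_2(t)$, and $H(a,s)$, $H(b,s)$ constant in $s$. Write $\gamma_s(t) = H(t,s)$. The plan is to lift each $\gamma_s$ to an analytic continuation $\tilde{\gamma}_s \colon [a,b] \to \mathfrak{S}_{\alpha}(U)$ of $\mathfrak{f}$ (which exists by hypothesis), define
\[
\Phi(s) \;=\; \tilde{\gamma}_s(b) \;\in\; \pi^{-1}(\gamma_1(b)) \cap \mathfrak{S}_{\alpha}(U),
\]
and prove that $\Phi$ is locally constant on $[0,1]$. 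Since $[0,1]$ is connected and $\pi^{-1}(\gamma_1(b)) \cap \mathfrak{S}_{\alpha}(U)$ carries the discrete subspace topology inherited from $\mathfrak{S}(U)$, the conclusion $\Phi(0) = \Phi(1)$ would follow.

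To show local constancy of $\Phi$, I would fix $s_0 \in [0,1]$ and argue as follows. The image $\tilde{\gamma}_{s_0}([a,b])$ is a compact arc in $\mathfrak{S}_{\alpha}(U)$, so by choosing a sufficiently fine partition $a = t_0 < t_1 < \cdots < t_n = b$, I can find basic open neighborhoods of the form $\widetilde{V}_i = \{(f_i,\zeta) : \zeta \in V_i\}$ in $\mathfrak{S}_{\alpha}(U)$, where each $V_i \subset U$ is a region and each $f_i \colon V_i \to \mathbb{C}$ is holomorphic, such that $\tilde{\gamma}_{s_0}([t_{i-1},t_i]) \subset \widetilde{V}_i$. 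On the overlap $V_i \cap V_{i+1}$ (which contains $\gamma_{s_0}(t_i)$), the germs $(f_i,\gamma_{s_0}(t_i))$ and $(f_{i+1},\gamma_{s_0}(t_i))$ coincide along $\tilde{\gamma}_{s_0}$, so $f_i = f_{i+1}$ on the connected component of $V_i \cap V_{i+1}$ containing $\gamma_{s_0}(t_i)$. Thus $(f_1,\ldots,f_n)$ is a chain realizing the analytic continuation of $\mathfrak{f}$ along $\gamma_{s_0}$, and $\Phi(s_0) = (f_n,\gamma_1(b))$.

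Now I would invoke uniform continuity of $H$ on the compact rectangle $[a,b] \times [0,1]$ together with a Lebesgue-number argument applied to the open cover $\{H^{-1}(V_i) : 1 \le i \le n\}$ of $[a,b] \times \{s_0\}$, to produce $\delta > 0$ such that whenever $|s - s_0| < \delta$ one has $\gamma_s([t_{i-1},t_i]) \subset V_i$ for all $i$. For such $s$, the same chain $(f_1,\ldots,f_n)$ provides a continuous lift of $\gamma_s$ starting at $\mathfrak{f}$, and by uniqueness of analytic continuation (the fact recalled in Section~\ref{lkjslfjkdwdf} that two continuations along the same curve agreeing at one point must coincide) this lift equals $\tilde{\gamma}_s$. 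In particular $\Phi(s) = (f_n,\gamma_s(b)) = (f_n,\gamma_1(b)) = \Phi(s_0)$, proving local constancy.

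The main obstacle is the combinatorial bookkeeping in the middle step: one must simultaneously control both the fineness of the partition in $t$ (dictated by how long the lift $\tilde{\gamma}_{s_0}$ stays in each basic neighborhood) and the neighborhood of $s_0$ over which the same chain of analytic elements continues to cover the deformed curves $\gamma_s$. Once the Lebesgue number is extracted and the chain of direct continuations is shown to remain valid for nearby $s$, the identification $\Phi(s) = \Phi(s_0)$ is immediate from uniqueness of analytic continuation.
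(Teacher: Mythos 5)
Your argument is correct and coincides with the classical proof that the paper itself does not reproduce, deferring instead to Ahlfors (p.\ 295): lift each intermediate curve $\gamma_s$, cover the lift $\tilde{\gamma}_{s_0}$ by finitely many basic neighborhoods $\widetilde{V}_i=\{(f_i,\zeta):\zeta\in V_i\}$, and show the endpoint germ is locally constant in $s$. The only step to make explicit is the continuity of the chained lift of $\gamma_s$ at the junction times $t_i$: you know $f_i=f_{i+1}$ on the component of $V_i\cap V_{i+1}$ containing $\gamma_{s_0}(t_i)$, and you should add that for $|s-s_0|<\delta$ the path $u\mapsto H(t_i,u)$, $u$ between $s_0$ and $s$, stays in $V_i\cap V_{i+1}$, so $\gamma_s(t_i)$ lies in that same component and $(f_i,\gamma_s(t_i))=(f_{i+1},\gamma_s(t_i))$ --- a one-line addition after which your identification $\Phi(s)=\Phi(s_0)$ is complete.
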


\begin{proof} See \cite[p. 295]{Ahlfors}.
\end{proof}

\begin{cor}\label{kjhlkjhlskjed} Under the same hypothesis of Theorem \ref{gjdsdgwe}, if  $U$ is simply connected, then for  any germ $\mathfrak{f} \in \mathfrak{S}_{\alpha}(U)$ there exists a branch $f: U \to \mathbb{C}$ such that  $\mathfrak{f}=(f, \pi(\mathfrak{f})).$ Equivalently, every local branch can be extended  to a branch defined on the whole space. 
\end{cor}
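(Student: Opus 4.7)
The plan is to combine the existence of analytic continuations guaranteed by the hypothesis with the Monodromy Theorem to define a single-valued branch by evaluation of terminal germs, and then verify holomorphicity using a local consistency property of analytic continuation.

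First I would set up the candidate map. Write $\mathfrak{f} = (g, z_0)$ with $z_0 = \pi(\mathfrak{f})$. For an arbitrary $z \in U$, use the connectedness of $U$ to choose a continuous curve $\gamma : [a, b] \to U$ with $\gamma(a) = z_0$ and $\gamma(b) = z$. By hypothesis, $\mathfrak{f}$ can be continued along $\gamma$, producing a germ $\mathfrak{g}_z = (h_\gamma, z)$ at $z$, still lying in $\mathfrak{S}_\alpha(U)$ since components of the sheaf are invariant under continuation. Define $f(z) := h_\gamma(z)$.

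The next step is to show $f$ is well defined. If $\gamma_1$ and $\gamma_2$ are any two curves in $U$ from $z_0$ to $z$, then simple connectedness of $U$ guarantees they are homotopic rel endpoints. The Monodromy Theorem (Theorem~\ref{gjdsdgwe}) then asserts that the continuations of $\mathfrak{f}$ along $\gamma_1$ and $\gamma_2$ terminate in the same germ at $z$, so in particular their local representatives agree at $z$. Thus $f(z)$ depends neither on the curve nor on the specific representative chosen for the terminal germ.

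To verify that $f$ is holomorphic, and indeed agrees as a germ with $\mathfrak{f}$ at $z_0$, I would argue locally. Fix $z \in U$, pick a curve $\gamma$ from $z_0$ to $z$, and let $(h, z)$ be the terminal germ, with $h$ defined on some connected neighborhood $V$ of $z$ in $U$. For any $z' \in V$, extend $\gamma$ by a short arc in $V$ from $z$ to $z'$; the analytic continuation along the concatenation is simply $(h, z')$ by the local behavior of continuation described in Section~\ref{lkjslfjkdwdf}, and by path-independence this is the terminal germ for any curve from $z_0$ to $z'$. Hence $f(z') = h(z')$ for every $z' \in V$, so $f$ coincides with the holomorphic map $h$ on $V$. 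In particular, taking $z = z_0$ and $\gamma$ the constant curve, we get $\mathfrak{f} = (f, z_0)$, and since the graph $\{(f, \zeta) : \zeta \in U\}$ is a connected subset of $\mathfrak{S}(U)$ meeting $\mathfrak{S}_\alpha(U)$, it lies entirely in $\mathfrak{S}_\alpha(U)$, so $f$ is indeed a global branch of the relevant global analytic function.

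The only delicate step is the local consistency argument in the third paragraph: one must be careful that the terminal germ along the concatenated curve really is $(h, z')$ and not a different holomorphic function with the same value at $z'$. This is handled by the uniqueness clause of analytic continuation recalled in Section~\ref{lkjslfjkdwdf} (two continuations along the same curve are either identical or differ at every point), which forces agreement as germs once agreement at a single point is established.
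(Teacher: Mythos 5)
Your proposal is correct and follows essentially the same route as the paper's proof: continue the germ along an arbitrary curve (existence given by hypothesis), invoke simple connectedness plus the Monodromy Theorem for path-independence, define $f$ by evaluating the terminal germ, and check locally that $f$ agrees with the terminal germ's representative. Your third and fourth paragraphs merely spell out in detail what the paper compresses into the remark that ``a small perturbation of $z$ produces essentially the same map $g_z$,'' so there is nothing genuinely different to compare.
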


\begin{proof} Given $z\in U$, we choose some curve $\gamma$ in $U$ connecting $\pi(\mathfrak{f})$ to $z.$ The continuation of $\mathfrak{f}$ along $\gamma$ leads to a germ $\mathfrak{g}_z=(g_z,z)$. By the Monodromy Theorem, $\mathfrak{g}_z$ does not depend on the choice of $\gamma,$ and we may define $f(z) = g_z(z).$ Since a small perturbation of $z$ produces essentially the same map $g_z$, we have a global branch $f:  U \to \mathbb{C}$ with $\mathfrak{f}=(f,\pi(\mathfrak{f})).$
\end{proof}

\gap \paragraph{Singularities.}\label{khskljhdfw}
If a global analytic multifunction is defined on a region $U$ except for a locally finite set of points $x_j$ in $U$,   then we say that each $x_j$ is a \emph{singularity} of $\mathbf{g}.$ If we can extend $\mathbf{g}$ to a global analytic multifunction on $U$, then by definition each $x_j$ is a \emph{removable singularity}.

  \begin{thm}[\bf Removable singularities]\label{lkhlkjhsiedf} Let $\mathbf{h}$ be a global analytic multifunction defined on a region $U$ except for a locally finite set of singularities $x_j$ in $U.$ Let $A_{r,j}$ be the punctured neighborhood of $x_j$ consisting of all $z$  with $0< |z-x_j | < r.$ Let $\gamma$ be a small circle $x_j+ re^{i2\pi t}$  around the  singularity $x_j$ with parameter $t$ in $[0, 1].$  
  
  If the continuation of every germ of $\mathbf{h}$ at $x_j +r$ along $\gamma$ leads back to itself, \textcolor{black}{then for every germ $\mathfrak{h} \in \mathfrak{S}(\mathbf{h})$} with $\pi(\mathfrak{h}) \in A_{r, j}$ there exists a unique branch $F$ of $\mathbf{h}$ defined on $A_{r,j}$  such that $F$ defines the germ $\mathfrak{h}$ at $\pi(\mathfrak{h})$, that is, $(F, \pi(\mathfrak{h}))=\mathfrak{h}.$ In other words, local branches within $A_{r,j}$ can be extended to a punctured neighborhood of the singularity.
  Let $\mathcal{B}_j$ denote the space of all branches $F$ defined on $A_{r, j}$ obtained in this way. 
  
  If every $F\in \mathcal{B}_j$ has a removable singularity at $x_j$, for every $j$, then $\mathbf{h}$ has a unique extension to a global analytic multifunction  $\mathbf{g}$ defined on $U$ which coincides with $\mathbf{h}$ at nonsingular points, and $\mathbf{g}(x_j)$ is given by $\{F(x_j): F\in \mathcal{B}_j\}$ at every singular point $x_j.$
  \end{thm}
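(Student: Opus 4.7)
The plan is: (1) for each singularity $x_j$, extend each germ of $\mathbf{h}$ over $A_{r,j}$ to a univalent branch using that the monodromy around $\gamma$ is trivial; (2) use the removable-singularity hypothesis to define the value set at $x_j$; (3) verify that the resulting multifunction $\mathbf{g}$ is global analytic and unique.

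For (1), fix $x_j$ and a germ $\mathfrak{h}_0$ of $\mathbf{h}$ at some $z_0 \in A_{r,j}$. Let $\beta$ be a fixed path in $A_{r,j}$ from $z_0$ to $x_j + r$, and let $\mathfrak{h}'$ be the continuation of $\mathfrak{h}_0$ along $\beta$. Given any $z \in A_{r,j}$ and two paths $\gamma_1, \gamma_2 \subset A_{r,j}$ from $z_0$ to $z$, the loop $\lambda := \gamma_1 \cdot \gamma_2^{-1}$ based at $z_0$ is homotopic in $A_{r,j}$ to $\beta \cdot \gamma^k \cdot \beta^{-1}$ for some $k \in \mathbb{Z}$, since $\pi_1(A_{r,j})$ is infinite cyclic generated by $\gamma$. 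Iterating the hypothesis shows that continuation of $\mathfrak{h}'$ along $\gamma^k$ returns $\mathfrak{h}'$; hence continuation of $\mathfrak{h}_0$ along $\beta \gamma^k \beta^{-1}$ returns $\mathfrak{h}_0$. By Theorem~\ref{gjdsdgwe} applied to the homotopic loops $\lambda$ and $\beta \gamma^k \beta^{-1}$, continuation of $\mathfrak{h}_0$ along $\lambda$ also returns $\mathfrak{h}_0$, which forces the continuations along $\gamma_1$ and $\gamma_2$ to agree at $z$. Therefore $\mathfrak{h}_0$ extends uniquely to a univalent holomorphic branch $F: A_{r,j} \to \mathbb{C}$, and every germ of $\mathbf{h}$ over $A_{r,j}$ arises from some $F \in \mathcal{B}_j$.

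For (2) and (3), the hypothesis ensures each $F \in \mathcal{B}_j$ extends holomorphically to $B(x_j, r)$. Let $\Sigma$ denote the set of singularities. Define $\mathbf{g}$ to agree with $\mathbf{h}$ on $U \setminus \Sigma$ and set $\mathbf{g}(x_j) = \{F(x_j) : F \in \mathcal{B}_j\}$. Away from $\Sigma$, $\mathbf{g}$ inherits the multifunction property from $\mathbf{h}$; at each $x_j$, every value $F(x_j) \in \mathbf{g}(x_j)$ is realized on $B(x_j, r)$ by the extended branch $F$. The sheaf space $\mathfrak{S}(\mathbf{g})$ consists of $\mathfrak{S}(\mathbf{h})$ together with the germs $(F, x_j)$ for $F \in \mathcal{B}_j$. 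Each basic sheaf neighborhood $\{(F, z) : z \in B(x_j, r)\}$ of $(F, x_j)$ meets $\mathfrak{S}(\mathbf{h})$ in $\{(F, z) : z \in A_{r,j}\}$, so $(F, x_j)$ lies in the closure of the connected set $\mathfrak{S}(\mathbf{h})$; since no further germs arise under analytic continuation, $\mathfrak{S}(\mathbf{g})$ is precisely the connected component of $\mathfrak{S}_U$ containing $\mathfrak{S}(\mathbf{h})$. Uniqueness follows because any global analytic extension $\mathbf{g}'$ has $\mathfrak{S}(\mathbf{g}')$ equal to this same component, forcing $\mathbf{g}' = \mathbf{g}$.

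The main obstacle is step (1): promoting a monodromy condition stated at a single basepoint along the single generating loop $\gamma$ to single-valuedness of continuations for arbitrary germs along arbitrary paths in $A_{r,j}$. The argument above handles this by transport through the chosen path $\beta$ and use of the infinite-cyclic structure of $\pi_1(A_{r,j})$.
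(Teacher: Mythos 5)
Your overall route (trivial monodromy around the puncture plus the infinite cyclic $\pi_1(A_{r,j})$ gives a single-valued branch on $A_{r,j}$, removability then gives the extension, and sheaf bookkeeping gives global analyticity and uniqueness) is essentially the Ahlfors argument that the paper merely cites, so the skeleton is the intended one. However, step (1) uses an assumption you never state or verify: that germs of $\mathbf{h}$ over $A_{r,j}$ can be continued along \emph{arbitrary} curves in $A_{r,j}$. You continue $\mathfrak{h}_0$ along $\beta$, along $\gamma_1$ and $\gamma_2$, and you invoke Theorem~\ref{gjdsdgwe}, whose hypothesis is precisely that every germ of the relevant component can be continued along every curve in the region. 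The literal hypothesis of Theorem~\ref{lkhlkjhsiedf} only asserts that continuations of germs based at $x_j+r$ along the single circle $\gamma$ exist and return to themselves; it does not by itself provide continuations along $\beta$, $\gamma_1$, $\gamma_2$, and without them you cannot even define $F$ on all of $A_{r,j}$. This continuability is a standing assumption in the source the paper cites (and, in the paper's application, it is supplied by Lemma~\ref{$(d)$}), so a self-contained proof must either state it as a hypothesis or verify it in the situation at hand; as written, your argument silently assumes it. (Also, the branch $F$ is \emph{single-valued}, not ``univalent'': it need not be injective.)

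The second soft spot is the sentence ``since no further germs arise under analytic continuation, $\mathfrak{S}(\mathbf{g})$ is precisely the connected component of $\mathfrak{S}_U$ containing $\mathfrak{S}(\mathbf{h})$.'' This is the whole content of the claim that $\mathbf{g}$ is a global analytic multifunction, and it is asserted rather than proved. What is needed is (i) the identification $\mathfrak{S}(\mathbf{g})=\mathfrak{S}(\mathbf{h})\cup\{(F,x_j):F\in\mathcal{B}_j\}$, which requires checking via the identity theorem and your step (1) that every germ of $\mathbf{g}$ at a nonsingular point is a germ of $\mathbf{h}$ and that every germ of $\mathbf{g}$ at $x_j$ coincides with some $(F,x_j)$; and (ii) maximality, i.e.\ that $\mathfrak{S}(\mathbf{g})$ is open and closed in $\mathfrak{S}_U$, so that no continuation of a germ of $\mathbf{g}$ along a curve in $U$ --- in particular a curve passing through a puncture $x_k$ --- ever leaves $\mathfrak{S}(\mathbf{g})$. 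Openness is immediate because branches propagate germs; closedness requires an argument: for a limit germ based at a nonsingular point use that $\mathfrak{S}(\mathbf{h})$ is a (closed) component of the sheaf over $U$ minus the singularities, and for a limit germ based at $x_k$ use step (1), the identity theorem on the punctured disk, and the removability hypothesis to identify it with $(F,x_k)$ for some $F\in\mathcal{B}_k$. These verifications are routine but they are exactly what your phrase ``no further germs arise'' is standing in for; with them, and with the continuability point above made explicit, your proof becomes a correct, self-contained version of the argument the paper outsources to Ahlfors.
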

  
  \begin{proof}[Reference to the proof] See \cite[p. 297-300]{Ahlfors}. On page 299 of \cite{Ahlfors} Ahlfors describes \emph{ordinary algebraic singularities}. Each $x_j$ in Theorem \ref{lkhlkjhsiedf} is an ordinary algebraic singularity;  the function $F\in \mathcal{B}_j$ is obtained by analytic continuation on page 298 by $F(\zeta)=f(\zeta^h),$  and  is the same  Laurent development   with $h=1$ which is present on page 299 of \cite{Ahlfors}. 
  \end{proof}

\section{Proof of Theorem \ref{gdsadgwed} }\label{asdfadsfqlkhdoihfqd}
We will divide the proof into small steps.

\begin{lem}\label{alkjacbcbkhlpsdf}
The set of ramified points defined in Definition~\ref{lkjlhdoiehfeddd} is locally finite.
\end{lem}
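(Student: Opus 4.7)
The plan is to combine uniform escape of orbits at infinity with a König-lemma argument that upgrades the qualitative Misiurewicz uniqueness into a quantitative bound on how long orbits of $0$ can remain in a bounded region.

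First I would choose $R_0 > 0$ so that (a) the finite set of Misiurewicz orbit values $\{0, a, z_1, \ldots, z_{\ell+n-1}\}$ is contained in $B(0, R_0)$, and (b) every $z$ with $|z| \geq R_0$ and every $w \in \mathbf{f}_a(z)$ satisfy $|w| > |z|$. Such $R_0$ exists because $|w - a|^q = |z|^p$ with $p > q$ forces super-linear growth at infinity. Property (b) has two consequences: once an orbit leaves $\overline{B(0, R_0)}$ it never returns, and its moduli grow like iterated $p/q$-th powers, so for any $R > R_0$ an orbit can spend at most $N(R)$ steps in the annulus $R_0 < |z| \leq R$. A simple counting argument then reduces the lemma to proving that $R \cap \overline{B(0, R_0)}$ is finite, since each point of $R$ in the annulus is reached from an entry point of $R \cap \overline{B(0, R_0)}$ by at most $N(R)+1$ applications of the $q$-valued correspondence.

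For the core finiteness claim I would argue by contradiction. Suppose distinct points $z^{(m)} \in R \cap \overline{B(0, R_0)}$ accumulate at some $z^\star$, and write $z^{(m)} = \zeta^{(m)}_{k_m}$ along an orbit $(0, \zeta^{(m)}_1, \ldots, \zeta^{(m)}_{k_m})$. Since each $\mathbf{f}_a^k(0)$ is finite, we may pass to a subsequence with $k_m \to \infty$. By (b) the whole orbit sits in $\overline{B(0, R_0)}$, so a Cantor diagonal extraction together with the closed-graph property of the algebraic correspondence $\mathbf{f}_a$ produces a limit sequence $(\xi_j)$ that is an infinite bounded orbit from $0$. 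The Misiurewicz hypothesis forces $\xi_j = w_j$ for every $j$, and finiteness of each $\mathbf{f}_a^j(0)$ upgrades convergence in $\mathbb{C}$ to the equality $\zeta^{(m)}_j = w_j$ for all $m$ sufficiently large (for each fixed $j$).

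Let $j^\star(m)$ denote the first index where the orbit deviates from $(w_j)$, or $\infty$ if no deviation occurs. Passing once more to a subsequence, two cases arise. If $j^\star(m) = \infty$ for all $m$, then $\zeta^{(m)}_{k_m} = w_{k_m}$ lies in the finite pre-periodic set $\{w_j\}_j$, contradicting distinctness. Otherwise $j^\star(m) < \infty$ for all $m$, and the deviation point $d_m := \zeta^{(m)}_{j^\star(m)}$ belongs to the finite set
\[
D = \bigcup_{v \in \{w_j\}_j} \bigl(\mathbf{f}_a(v) \setminus \{\text{Misiurewicz successor of } v\}\bigr).
\]
Here the Misiurewicz uniqueness enters: any infinite forward orbit from $d \in D$ that remained in $\overline{B(0, R_0)}$ would splice with the Misiurewicz prefix to yield a second bounded orbit from $0$, contradicting uniqueness. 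Hence the subtree of orbits from $d$ staying in $\overline{B(0, R_0)}$ has no infinite branch; being finitely branching (degree at most $q$), König's lemma forces it to have finite depth $N_d$. Taking $N := \max_{d \in D} N_d$, every $\zeta^{(m)}_{k_m}$ belongs to the finite set of endpoints of such continuations, once again contradicting distinctness.

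The main obstacle is precisely the step handled by König's lemma: the Misiurewicz hypothesis forbids a second infinite bounded orbit from $0$, but it does not \emph{a priori} prevent arbitrarily long finite bounded deviations; converting the absence of infinite branches into a uniform bound on branch length is the essential device.
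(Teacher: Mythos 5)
Your argument is correct, but it follows a genuinely different route from the paper. The paper disposes of Lemma~\ref{alkjacbcbkhlpsdf} in three lines by citing \cite[Lemmas 2.1 and 2.2]{ETDS22} together with the Misiurewicz hypothesis: those external lemmas already control how orbits of the correspondence distribute in bounded regions, so local finiteness of the postcritical set is read off directly. You instead build a self-contained proof: the super-linear growth $|w-a|=|z|^{p/q}$ gives a radius $R_0$ beyond which orbits escape monotonically, reducing everything to finiteness of $R\cap\overline{B(0,R_0)}$; then the dichotomy ``follow the unique bounded orbit or deviate at a point of the finite set $D$,'' combined with the splicing observation that an infinite bounded continuation from a deviation point would manufacture a second bounded orbit of $0$, and König's lemma to turn ``no infinite branch'' into a uniform depth bound, finishes the count. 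This is sound, and it is the essential content that the citation hides; the paper's approach buys brevity and consistency with the framework of \cite{ETDS22} (where compactness of $K_c$ and escape estimates are already established), while yours buys independence from that reference and makes explicit exactly where hypothesis $(i)$ of Definition~\ref{dlkhaskjhdfjhkwefdc} is used. Two small remarks: the Cantor-diagonal extraction of a limit orbit is superfluous, since the first-deviation dichotomy applied to each finite orbit segment already yields $R\cap\overline{B(0,R_0)}\subset\{w_j\}\cup\bigcup_{d\in D}(\text{nodes of the tree rooted at }d)$ without any limiting procedure; and the phrase ``Misiurewicz successor of $v$'' should be justified as well defined (if a point of the cycle had two distinct successors inside the bounded orbit, splicing would again produce a second bounded orbit of $0$), which is the same uniqueness argument you already use.
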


\begin{proof} Apply \cite[Lemmas 2.1 and 2.2]{ETDS22}.  Since $a$ is Misiurewicz, we have only one bounded forward orbit of $0$ under $\mathbf{f}_a,$ which is strictly preperiodic. There exist at most finitely many points of the postcritical set within any open ball $B(0, r).$ 
\end{proof}

Using the same notation of the statement of Theorem \ref{gdsadgwed}, 
we know that the set of branch points $B=\{z \in \mathbb{D}: \varphi'(z) =0\}$ is locally finite and  its complement $U=\mathbb{D}{\setminus} B$ is a region.

 \begin{lem}\label{$(a)$}The restriction $\mathbf{h}= \mathbf{g}|_{U}$ is a separable holomorphic multifunction with $E(\mathbf{h})=\emptyset,$ where $E(\mathbf{h})$ is as in  Definition \ref{asdfasdfwdijpois}.
 
 \end{lem}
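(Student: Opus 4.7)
The plan is to show directly that each $x \in U$ admits a separability neighborhood, so that $E(\mathbf{h})=\emptyset$ is automatic. The two structural facts that make this possible are: (a) the restriction $\varphi|_U : U \to \mathbb{C}\setminus R$ is an \emph{unbranched} covering map, since $\varphi^{-1}(R)=B$ by regularity of $\varphi$; and (b) on any simply connected $D\subset \mathbb{C}\setminus R$, the correspondence $\mathbf{f}_a^{-1}$ decomposes into $p$ univalent holomorphic branches, because $a\in R$ (as $a=\mathbf{f}_a(0)$) is the unique branch point of $\mathbf{f}_a^{-1}$ and $D$ avoids it.

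Fix $x\in U$ and set $y=\varphi(x)\in \mathbb{C}\setminus R$. The equation $z^p=(y-a)^q$ has $p$ distinct roots $w_1,\dots,w_p$, which are the points of $\mathbf{f}_a^{-1}(y)$. Since $R$ is forward invariant under $\mathbf{f}_a$ (as $R=\bigcup_k\mathbf{f}_a^k(0)$) and $y\notin R$, none of the $w_k$ can lie in $R$, for otherwise $y\in\mathbf{f}_a(R)\subset R$. Pick a simply connected disk $D\subset \mathbb{C}\setminus R$ around $y$ small enough that $\mathbf{f}_a^{-1}$ splits on $D$ into univalent holomorphic branches $g_1,\dots,g_p$ with $g_k(y)=w_k$ and $g_k(D)\subset D_k$, where $D_1,\dots,D_p$ are pairwise disjoint disks in $\mathbb{C}\setminus R$ centred at the $w_k$. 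Since $\varphi|_U$ is a covering, $D$ and each $D_k$ are evenly covered: let $V_x$ be the sheet of $\varphi^{-1}(D)$ containing $x$, and let $\{V_{k,\alpha}\}_\alpha$ be the sheets of $\varphi^{-1}(D_k)$ with local inverses $\psi_{k,\alpha}:D_k\to V_{k,\alpha}$. Define
\[
F_{k,\alpha} \;:=\; \psi_{k,\alpha}\circ g_k\circ (\varphi|_{V_x}) \,:\, V_x \longrightarrow V_{k,\alpha}.
\]
A direct check gives $\varphi\circ F_{k,\alpha}(z)=g_k(\varphi(z))\in\mathbf{f}_a^{-1}(\varphi(z))$, so each $F_{k,\alpha}$ is a branch of $\mathbf{h}$, and the family $\mathcal{F}=\{F_{k,\alpha}\}$ has pairwise disjoint images since the underlying sheets $V_{k,\alpha}$ are pairwise disjoint as $(k,\alpha)$ varies.

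It remains to verify that every local branch $h:W\to\mathbb{C}$ of $\mathbf{h}$ on a region $W\subset V_x$ equals $F_{k,\alpha}|_W$ for some $(k,\alpha)$. From $h(z)\in \varphi^{-1}(\mathbf{f}_a^{-1}(\varphi(z)))$ one has $\varphi\circ h(z)\in\mathbf{f}_a^{-1}(\varphi(z))$ for all $z\in W$, so $\varphi\circ h$ is a continuous selection from the $p$ univalent branches of $\mathbf{f}_a^{-1}$ over $D$; by connectedness of $W$, this selection is locally constant and hence $\varphi\circ h=g_k\circ\varphi|_W$ for a unique $k$. Thus $h$ is a holomorphic lift of $g_k\circ\varphi|_W$ under the unbranched covering $\varphi$, and by uniqueness of lifts $h=\psi_{k,\alpha}\circ g_k\circ\varphi|_W=F_{k,\alpha}|_W$, where $\alpha$ is pinned down by requiring $h(z_0)\in V_{k,\alpha}$ at any chosen $z_0\in W$. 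The main source of friction in all of this is juggling the three levels (the correspondence on $\mathbb{C}$, the covering $\varphi$, and the lifted multifunction on $\mathbb{D}$); once facts (a) and (b) are in hand, both the construction of $\mathcal{F}$ and the matching of an arbitrary local branch to one of its elements become routine applications of the unique-lifting property.
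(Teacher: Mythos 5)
Your proof is correct and follows essentially the same route as the paper: away from the branch points $\varphi$ is an honest covering onto $\mathbb{C}\setminus R$, and $\mathbf{f}_a^{-1}$ splits into $p$ univalent branches on small disks avoiding $R$ (which is forward invariant), so the lifted branches $\psi_{k,\alpha}\circ g_k\circ\varphi$ form the required family with disjoint images. The only difference is that you spell out explicitly the even-covering bookkeeping and the verification that an arbitrary local branch is a restriction of one of these lifts, which the paper leaves implicit.
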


 \begin{proof}
 We know that  $\varphi: \mathbb{D} {\setminus} B \to \mathbb{C} {\setminus} R$ is a covering map, where $R=\varphi(B)$ is the set of ramified points. If $V$ is a sufficiently small  open disk in $\mathbb{D}{\setminus} B$ then $\varphi(V)$ is also a small conformal disk in $\mathbb{C} {\setminus} R$ which is mapped under $\mathbf{f}_a^{-1}$ to $p$ disjoint conformal disks  $D_j$ in $\mathbb{C}{\setminus} R,$ each of which   evenly is covered by $\varphi,$ that is, $\varphi$ maps every connected component of $\varphi^{-1}(D_j)$ onto $D_j$ as a univalent map. Thus $\mathbf{h}(V)$ consists of disjoint conformal disks, each of which being the image of a univalent branch of $\mathbf{h}.$ The  conditions that define a separable holomorphic multifunction (def. \ref{asdfasdfwdijpois}) are clearly implied by this property.
 \end{proof}

\begin{lem} \label{$(b)$} We consider the partition of $\mathfrak{S}(\mathbf{h})$ into connected components $\Lambda_{\alpha}.$ Each $\Lambda_{\alpha}$ is a connected component of the sheaf $\mathfrak{S}_{U}.$

\end{lem}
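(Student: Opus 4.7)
The plan is to prove that $\mathfrak{S}(\mathbf{h})$ is both open and closed in the sheaf space $\mathfrak{S}_U$. Once this is established, $\mathfrak{S}(\mathbf{h})$ is a union of connected components of $\mathfrak{S}_U$, and every connected component $\Lambda_\alpha$ of $\mathfrak{S}(\mathbf{h})$ is automatically a connected component of $\mathfrak{S}_U$, as required.

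Openness is immediate from the definition of the sheaf topology: if $\mathfrak{f}=(f,z_0)\in\mathfrak{S}(\mathbf{h})$, then $f$ is a branch of $\mathbf{h}$ on some region $V\ni z_0$, and the basic neighborhood $\tilde V=\{(f,\zeta):\zeta\in V\}$ of $\mathfrak{f}$ lies entirely in $\mathfrak{S}(\mathbf{h})$, since each $(f,\zeta)$ is the germ at $\zeta$ of the branch $f$ of $\mathbf{h}$. For closedness, I would take a limit point $(g,z_0)\in\mathfrak{S}_U$ of $\mathfrak{S}(\mathbf{h})$, let $\Omega$ denote the region of definition of a representative of $g$, and observe that the basic-neighborhood description forces a sequence $z_n\to z_0$ in $\Omega$ with $(g,z_n)\in\mathfrak{S}(\mathbf{h})$. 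The set
\[
T=\{w\in\Omega:(g,w)\in\mathfrak{S}(\mathbf{h})\}
\]
is then open (by the same argument used for openness) and contains every $z_n$, so it is nonempty. To show $T$ is closed in $\Omega$, take $w_0\in\overline{T}\cap\Omega$ and invoke Lemma~\ref{$(a)$}: since $E(\mathbf{h})=\emptyset$, there is a connected neighborhood $V\subset\Omega$ of $w_0$ carrying a family $\mathcal{F}$ of branches of $\mathbf{h}$ on $V$ with pairwise disjoint images, such that every local branch of $\mathbf{h}$ on a subregion of $V$ is a restriction of some $f\in\mathcal{F}$. Choosing $w\in T\cap V$ close to $w_0$, the germ $(g,w)$ is represented by a branch of $\mathbf{h}$ on some subregion $W\subset V$, which by separability must agree with $f|_W$ for a unique $f\in\mathcal{F}$. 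Since $g$ and $f$ are holomorphic on the connected region $V$ and coincide on the open set $W$, the identity theorem gives $g\equiv f$ throughout $V$, so $g$ is a branch of $\mathbf{h}$ on $V$ and $w_0\in T$. Connectedness of $\Omega$ then yields $T=\Omega$, whence $g$ is a branch of $\mathbf{h}$ on $\Omega$ and $(g,z_0)\in\mathfrak{S}(\mathbf{h})$.

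The only real obstacle is the closedness step, where one must upgrade the fact that germs of $g$ at a sequence converging to $z_0$ come from branches of $\mathbf{h}$ to the statement that $g$ itself is a branch of $\mathbf{h}$ on its entire domain. This is precisely where the conclusions of Lemma~\ref{$(a)$}, namely the separability of $\mathbf{h}$ together with $E(\mathbf{h})=\emptyset$, are indispensable: they produce, near any point of $U$, a locally finite family of candidate branches, so that the identity principle on the connected region $V$ can promote local coincidence into a global one.
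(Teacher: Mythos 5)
Your proof is correct, but it follows a genuinely different route from the paper. The paper works inside the ambient component $\tilde{\Lambda}_{\alpha}$ of $\mathfrak{S}_U$ containing $\Lambda_{\alpha}$: it joins an arbitrary germ of $\tilde{\Lambda}_{\alpha}$ to a germ of $\Lambda_{\alpha}$ by a path (analytic continuation along $\gamma=\pi\circ\tilde{\gamma}$), covers $\gamma$ by a finite chain of disks, and propagates the explicit algebraic relation $(\varphi\circ f_j(z))^p=(\varphi(z)-a)^q$ from one disk to the next by the Identity Theorem, concluding that the terminal germ is again a germ of a branch of $\mathbf{h}$; notably, that argument does not need Lemma~\ref{$(a)$} at this step, since being a branch of $\mathbf{h}$ is encoded in the functional equation. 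You instead prove the abstract statement that $\mathfrak{S}(\mathbf{h})$ is open and closed in $\mathfrak{S}_U$, with openness coming from the sheaf topology and closedness from a connectedness argument on the domain of a representative, where separability with $E(\mathbf{h})=\emptyset$ (Lemma~\ref{$(a)$}) plus the Identity Theorem upgrade local agreement with a candidate branch to agreement on the whole neighborhood $V$. Your route is more general---it shows that for any separable multifunction without exceptional points the germ space is clopen in the sheaf, in the same spirit as the paper's earlier theorem that separable multifunctions are unions of global analytic multifunctions---and it avoids both the chain-of-disks bookkeeping and any use of the specific defining equation of $\mathbf{f}_a$; the paper's argument, by contrast, is self-contained at this point and exploits the algebraic structure directly. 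Two small points you should make explicit: choose the representative domain $\Omega$ of the limit germ connected and contained in $U$ (so that separability applies at every point of $\Omega$ and the basic neighborhoods stay in $\mathfrak{S}_U$), and note that the neighborhood furnished by Definition~\ref{asdfasdfwdijpois} may be shrunk to a connected neighborhood inside $\Omega$ since the separability property is inherited by subneighborhoods; neither is a genuine gap.
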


 \begin{proof} It suffices to show that every germ $\mathfrak{h}$ in the connected component $\tilde{\Lambda}_{\alpha}$  of $\mathfrak{S}_{U}$  that contains $ \Lambda_{\alpha}$ is determined by a local branch of $\mathbf{h}$, for then $\Lambda_{\alpha} = \tilde{\Lambda}_{\alpha}$ follows immediately. Since $\tilde{\Lambda}_{\alpha}$ is path connected and contains $\Lambda_{\alpha},$ we join $\mathfrak{h}$ to a germ $\mathfrak{g} \in \Lambda_{\alpha}$ using a continuous curve $\tilde{\gamma}:[0,1] \to \tilde{\Lambda}_{\alpha}$  such that $\tilde{\gamma}(0)=\mathfrak{g}$ and $\tilde{\gamma}(1) = \mathfrak{h}.$ Clearly $\tilde{\gamma}$ is an analytic continuation of $\mathfrak{g}$ along $\gamma:=\pi \circ \tilde{\gamma}.$ Using a standard compactness argument we find finitely many holomorphic maps $f_j: \Omega_j \to \mathbb{D}$ defined on open conformal disks $\Omega_j$ such that $(\Omega_j)_1^n$ is a covering of $\gamma([0,1])$ and  $\Omega_j \cap \Omega_{j+1} \neq \emptyset$ for every $j.$ We may also assume that for some partition
 \[0=t_1 < t_2 < \cdots < t_{n+1}=1 \]
  \noindent we have $\gamma([t_j, t_{j+1}])  \subset \Omega_j$ and $\tilde{\gamma}(t) = (f_j, \gamma(t))$ as  $t\in [t_j, t_{j+1}]$, for every $j.$ Since $\mathfrak{g}\in \Lambda_{\alpha}$ is a germ determined by $f_1$ at $\gamma(0),$ it follows that $f_1(z) \in \varphi^{-1} \circ \mathbf{f}_a^{-1} \circ \varphi(z)$, whenever $z\in \Omega_1.$ This means that  \begin{equation} \label{gkdlskejref} (\varphi \circ f_j(z))^p = (\varphi(z) -a)^{q} \end{equation} if $j=1$ and $z\in \Omega_1.$ Since $f_1$ coincides with $f_2$ on the intersection of their domains, \eqref{gkdlskejref} holds for $j=2$ and $z\in \Omega_1 \cap \Omega_2.$ By the Identity  Theorem, the corresponding equation \eqref{gkdlskejref} for $j=2$ actually holds for any $z$ in $\Omega_2.$ We may repeat this argument inductively. The conclusion: \eqref{gkdlskejref} holds for $z\in \Omega_j$, for every $j.$ In particular, for $j=n$ this implies $f_n(z) \in \varphi^{-1} \circ \mathbf{f}_a^{-1} \circ \varphi(z)$, for every $z\in \Omega_n.$  Hence $\mathfrak{h}=(f_n, \gamma(1))$ is determined by a local branch of $\mathbf{h}.$
  \end{proof}

\begin{lem} \label{$(c)$} If $\pi$ is the standard projection of $\mathfrak{S}_U$ onto $U,$ then $\pi(\Lambda_{\alpha}) = U.$

\end{lem}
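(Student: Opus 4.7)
The inclusion $\pi(\Lambda_\alpha) \subseteq U$ is immediate from the construction. For the reverse inclusion, fix an arbitrary target $z_0 \in U$ and any base germ $\mathfrak{g}_0 \in \Lambda_\alpha$, and set $w_0 := \pi(\mathfrak{g}_0) \in U$. Since $B$ is locally finite in the disk, $U = \mathbb{D} \setminus B$ is a region, so I can pick a continuous path $\gamma : [0,1] \to U$ with $\gamma(0) = w_0$ and $\gamma(1) = z_0$. The plan is to construct a continuous lift $\tilde\gamma : [0,1] \to \mathfrak{S}(\mathbf{h})$ with $\tilde\gamma(0) = \mathfrak{g}_0$; then $\tilde\gamma(1)$ is a germ over $z_0$ which, by continuity and the fact that $\Lambda_\alpha$ is a connected component of $\mathfrak{S}_U$ (Lemma~\ref{$(b)$}), must remain in $\Lambda_\alpha$.

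The lift is built from the separability of $\mathbf{h}$ with empty exceptional set established in Lemma~\ref{$(a)$}. Around each point $p \in U$ there is a neighborhood $V_p \subset U$ carrying a family $\mathcal{F}_p$ of branches of $\mathbf{h}$ with pairwise disjoint images, such that every local branch of $\mathbf{h}$ on a subregion of $V_p$ is the restriction of a unique member of $\mathcal{F}_p$. By compactness of $\gamma([0,1])$ and a standard Lebesgue-number argument, I can choose finitely many such neighborhoods $V_1,\ldots,V_n$ together with a partition $0 = s_0 < s_1 < \cdots < s_n = 1$ satisfying $\gamma([s_{i-1}, s_i]) \subset V_i$. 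Because $\mathfrak{g}_0 \in \mathfrak{S}(\mathbf{h})$, it is represented near $w_0$ by a branch of $\mathbf{h}$; separability inside $V_1$ identifies a unique $f_1 \in \mathcal{F}_1$ whose germ at $w_0$ equals $\mathfrak{g}_0$. At each transition point $\gamma(s_i) \in V_i \cap V_{i+1}$, the restriction of $f_i$ to a small ball in the overlap is a local branch on a subregion of $V_{i+1}$, and by the uniqueness clause of separability in $V_{i+1}$ it agrees with a unique $f_{i+1} \in \mathcal{F}_{i+1}$ on that ball. Setting $\tilde\gamma(t) := (f_i, \gamma(t))$ for $t \in [s_{i-1}, s_i]$ then produces a well-defined, continuous map into $\mathfrak{S}(\mathbf{h})$.

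With the lift in hand, Lemma~\ref{$(b)$} ensures that $\Lambda_\alpha$ is a connected component of the ambient sheaf $\mathfrak{S}_U$, so the continuous curve $\tilde\gamma$ starting at $\mathfrak{g}_0 \in \Lambda_\alpha$ is trapped in $\Lambda_\alpha$. Hence $\tilde\gamma(1) \in \Lambda_\alpha$ and $\pi(\tilde\gamma(1)) = z_0$, yielding $z_0 \in \pi(\Lambda_\alpha)$. The only delicate point is the consistency of the patched branches at the transition instants $s_i$, which is exactly what the uniqueness part of separability provides; connectedness of $U$, compactness of $\gamma$, and the fact that continuous paths cannot cross between connected components are routine.
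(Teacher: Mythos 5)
Your proposal is correct, but the mechanism you use for the key step differs from the paper's. The paper also reduces the lemma to continuing an arbitrary germ $\mathfrak{h}=(h,z_0)$ of $\mathbf{h}$ along a curve $\gamma$ in $U$ and then concluding by connectedness of $\Lambda_{\alpha}$; however, it produces the continuation from the composition structure of $h$ as (local inverse of $\varphi$) $\circ$ (branch of $\mathbf{f}_a^{-1}$) $\circ\, \varphi$: the germ of the algebraic multifunction $\mathbf{f}_a^{-1}$ is continued along $\varphi\circ\gamma$ (possible because the only singularity $a$ lies in $R$, which $\varphi\circ\gamma$ avoids), the resulting curve stays in $\mathbb{C}\setminus R$ and is lifted back to $\mathbb{D}\setminus B$ by the covering-space Lifting Theorem, and Theorem \ref{jdjfsfdwef} is then invoked to recognize the lifted curve as an analytic continuation of $\mathfrak{h}$. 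You never leave the disk: you extract directly from Lemma \ref{$(a)$} (separability with $E=\emptyset$) the unrestricted continuability of germs of $\mathbf{h}$ along paths in $U$, by chaining finitely many separability neighborhoods via a Lebesgue-number argument and matching members of the families $\mathcal{F}_i$ at the transition points through the disjoint-images (uniqueness) clause. This is more elementary and slightly more general --- it shows that any separable multifunction with empty exceptional set admits continuation of every germ along every path, without needing the selection curve $\eta$ that Theorem \ref{jdjfsfdwef} takes as input --- whereas the paper's route keeps explicit track of how the continued branch still factors through $\mathbf{f}_a^{-1}$ and $\varphi^{-1}$, information that is reused in the later lemmas. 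Your appeal to Lemma \ref{$(b)$} at the end is a bit stronger than necessary (the lift consists of germs of branches of $\mathbf{h}$, so being a connected component of $\mathfrak{S}(\mathbf{h})$ already traps it in $\Lambda_{\alpha}$), but this is harmless.
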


   \begin{proof} Fix a point $z_0$ in $U.$ Any $z\in U$ is connected to $z_0$ by a curve $\gamma$ defined on $[0,1]$ with $\gamma(0) = z_0$ and $\gamma(1) =z.$ First, suppose that   the analytic continuation $\tilde{\gamma}$ of any germ $\mathfrak{h} \in \Lambda_{\alpha}$ with $\pi(\mathfrak{h}) = z_0$ along $\gamma$ always exists. Since $z$ is arbitrary and  $\Lambda_{\alpha}$ is connected, it follows that $\tilde{\gamma}(1)$ is a germ in $\Lambda_{\alpha}$  whose projection is $z$, thereby proving that $\pi(\Lambda_{\alpha}) = U.$ Now we will check that  the continuation of any germ $\mathfrak{h}= (h, z_0)$ along $\gamma$ exists. The local branch $h$  of $\mathbf{h}$ is defined on a neighborhood of $z_0$ and is given by the composition $g\circ f \circ \varphi,$ where $f$ is a local branch of $\mathbf{f}_{a}^{-1}$ at $\varphi(z_0)$ and $g$ is a local inverse of $\varphi$ at $f(\varphi(z_0)).$ Since $\gamma$ does not intersect $B$, its image $\gamma_1=\varphi \circ \gamma$ is  contained in $\mathbb{C} {\setminus} R.$ Considering the global analytic  multifunction $\mathbf{f}_a^{-1}: \mathbb{C}{\setminus} \{a\} \to \mathbb{C}{\setminus} \{0\}$, the continuation of $(f, \varphi(z_0))$ along  $\gamma_1=\varphi \circ \gamma$ (see def. \ref{jhlkjhsoieuhdsd}) yields a curve $\gamma_2$ in $\mathbb{C}{\setminus} R.$  As a consequence of the Lifting Theorem for covering spaces,  the inverse $\varphi^{-1}: \mathbb{C}{\setminus} R \to \mathbb{D}{\setminus} B$ of the covering map is a global analytic multifunction and $(g, f(\varphi(z_0)))$ is a germ of $\varphi^{-1}$; the continuation of this germ along $\gamma_2$ yields a curve  $\gamma_3.$ Notice that $\gamma_3(t)$ is in $\mathbf{h}(\gamma(t))$, for every $t.$ Taking into account  Lemma \ref{$(a)$} and Theorem \ref{jdjfsfdwef},
the final curve $\gamma_3$ is given by an analytic continuation of $\mathfrak{h} \in \mathfrak{S}(\mathbf{h})$ along $\gamma, $ as desired.  
\end{proof}

\begin{lem} \label{$(d)$}
There exists a unique global analytic multifunction $\mathbf{h}_{\alpha}: U \to \mathbb{D}$ such that $\mathfrak{S}(\mathbf{h}_{\alpha}) = \Lambda_{\alpha}.$  Any germ $\mathfrak{h} \in \mathfrak{S}(\mathbf{h}_{\alpha})$ can be continued along any curve $\gamma:[a, b] \to U$ starting at $\pi(\mathfrak{h}).$ For every $z\in U$, we have \begin{equation}\label{lkjkdjewed} \mathbf{h}(z)= \bigcup_{\alpha} \mathbf{h}_{\alpha}(z).\end{equation} 
\end{lem}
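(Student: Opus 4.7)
The plan is to construct $\mathbf{h}_\alpha$ as the evaluation multifunction associated with the component $\Lambda_\alpha$ and then verify each assertion of the lemma in turn. First I would set
\[
\mathbf{h}_\alpha(z) := \{\, h(z) : (h,z) \in \Lambda_\alpha\,\}, \qquad z \in U,
\]
which is nonempty for every $z$ by Lemma \ref{$(c)$}. To show $\mathbf{h}_\alpha$ is a holomorphic multifunction, I would fix $w_0 = h(z_0) \in \mathbf{h}_\alpha(z_0)$ with $(h, z_0) \in \Lambda_\alpha$, choose a connected neighborhood $V$ of $z_0$ inside the domain of $h$, and observe that the section $\{(h, z) : z \in V\}$ is connected in $\mathfrak{S}_U$ and meets $\Lambda_\alpha$, hence lies inside $\Lambda_\alpha$; thus $h(z) \in \mathbf{h}_\alpha(z)$ for every $z \in V$, so $h$ is a bona fide local branch of $\mathbf{h}_\alpha$.

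The next step is to show $\mathfrak{S}(\mathbf{h}_\alpha) = \Lambda_\alpha$. The inclusion $\Lambda_\alpha \subset \mathfrak{S}(\mathbf{h}_\alpha)$ is the previous paragraph. For the reverse inclusion I would take a branch $h : \Omega \to \mathbb{D}$ of $\mathbf{h}_\alpha$ and some $z \in \Omega$; since $\mathbf{h}_\alpha(w) \subset \mathbf{h}(w)$, the map $h$ is in particular a branch of $\mathbf{h}$, and by construction there exists $(h', z) \in \Lambda_\alpha$ with $h'(z) = h(z)$. Separability of $\mathbf{h}$ together with $E(\mathbf{h}) = \emptyset$ (Lemma \ref{$(a)$}) then forces $h$ and $h'$ to coincide on a neighborhood of $z$, because the local family $\mathcal{F}$ described in Definition \ref{asdfasdfwdijpois} has pairwise disjoint images, so two branches of $\mathbf{h}$ agreeing at a single point must locally be the same element of $\mathcal{F}$. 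Hence $(h, z) = (h', z) \in \Lambda_\alpha$. Combined with Lemma \ref{$(b)$}, this identifies $\mathfrak{S}(\mathbf{h}_\alpha)$ with a connected component of $\mathfrak{S}_U$, so $\mathbf{h}_\alpha$ is a global analytic multifunction in the sense of Definition \ref{kjhlksbldfhkowd}. Uniqueness is automatic, since the values of any candidate must equal $\{h(z) : (h,z) \in \Lambda_\alpha\}$.

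For the continuation property I would recycle the construction inside the proof of Lemma \ref{$(c)$}: writing a germ in $\Lambda_\alpha$ locally as $g \circ f \circ \varphi$, any curve $\gamma$ in $U$ lifts through $\varphi$ to a curve $\gamma_1$ in $\mathbb{C} \setminus R$, which supports a continuation of $(f, \varphi(\gamma(a)))$ via the global analytic multifunction $\mathbf{f}_a^{-1}$, and that continuation is then lifted back to $\mathbb{D} \setminus B$ through $\varphi^{-1}$. Lemma \ref{$(a)$} together with Theorem \ref{jdjfsfdwef} promotes the resulting curve to an analytic continuation in $\mathfrak{S}(\mathbf{h})$, and by continuity it stays in the component $\Lambda_\alpha = \mathfrak{S}(\mathbf{h}_\alpha)$ in which it started.

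Finally, the union formula \eqref{lkjkdjewed} follows directly from Lemma \ref{$(b)$}, which gives $\mathfrak{S}(\mathbf{h}) = \bigcup_\alpha \Lambda_\alpha = \bigcup_\alpha \mathfrak{S}(\mathbf{h}_\alpha)$: any $w \in \mathbf{h}(z)$ equals $h(z)$ for some germ $(h, z) \in \mathfrak{S}(\mathbf{h})$, hence belongs to some $\mathbf{h}_\alpha(z)$, and the reverse inclusion is immediate from $\mathbf{h}_\alpha \subset \mathbf{h}$. The hard part of the proof will be the second paragraph, where I need to upgrade pointwise coincidence of two branches at a single $z$ to equality of germs; this is precisely where the separability of $\mathbf{h}$ and the emptiness of $E(\mathbf{h})$ on $U$, provided by Lemma \ref{$(a)$}, are indispensable.
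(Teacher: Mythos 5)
Your proposal is correct and follows essentially the same route as the paper: you define $\mathbf{h}_{\alpha}$ by evaluating germs of $\Lambda_{\alpha}$, prove $\mathfrak{S}(\mathbf{h}_{\alpha})=\Lambda_{\alpha}$ using Lemma \ref{$(b)$} for one inclusion and the separability with $E(\mathbf{h})=\emptyset$ from Lemma \ref{$(a)$} for the other, obtain the continuation property by the same lift--continue--relift argument through $\varphi$ and Theorem \ref{jdjfsfdwef} as in Lemma \ref{$(c)$}, and deduce \eqref{lkjkdjewed} from the partition into components. The only cosmetic difference is that you shortcut the paper's intermediate step (showing the auxiliary germs $g_z$ are branches of $\mathbf{h}$) by noting $\mathbf{h}_{\alpha}\subset\mathbf{h}$ directly, which is harmless.
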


\begin{proof} We must define $\mathbf{h}_{\alpha}$ explicitly. By definition,  if $z\in U, $ then $w$ belongs to $\mathbf{h}_{\alpha}(z)$ whenever $w=h(z)$, for some germ $\mathfrak{h}=(h,z)$ in  $\Lambda_{\alpha}$ with $\pi(\mathfrak{h})=z.$ In this case,  it should be noticed that $(h, \zeta)$ is in $\Lambda_{\alpha}$ for every $\zeta$ in the domain of $h,$ for then $\Lambda_{\alpha}$ is a connected component of the sheaf $\mathfrak{S}_{U}.$ This shows that every $h$ used to define a value of $\mathbf{h}_{\alpha}(z)$ can also be used to define $h(\zeta)$ as a value of $\mathbf{h}_{\alpha}(\zeta),$ for every $\zeta$ in the domain of $h.$ Hence $h$ is a branch of the multifunction $\mathbf{h}_{\alpha}.$ It follows  that $\mathbf{h}_{\alpha}$ is a holomorphic multifunction. 
 
 We will show that   $\mathfrak{S}(\mathbf{h}_{\alpha})= \Lambda_{\alpha}.$  For the first inclusion $\Lambda_{\alpha} \subset \mathfrak{S}(\mathbf{h}_{\alpha})$, let $\mathfrak{h}$ be a germ in $\Lambda_{\alpha},$ which is a component of $\mathfrak{S}(\mathbf{h}).$  Then $\mathfrak{h}=(h, z_0)$ for some holomorphic branch of $\mathbf{h}$ and some $z_0$ in the domain of $h.$ By Lemma 
\ref{$(b)$}, $(h, z)$ is a germ in $\Lambda_{\alpha}$, for every $z$ in the domain of $h.$ According to the definition of $\mathbf{h}_{\alpha}$, $h(z)$ belongs to $\mathbf{h}_{\alpha}(z),$ for every $z$ in the domain of $h.$  Hence $h$ is a branch of $\mathbf{h}_{\alpha}$ and $\mathfrak{h} \in \mathfrak{S}(\mathbf{h}_{\alpha})$, which shows that $\Lambda_{\alpha}$ is contained in $\mathfrak{S}(\mathbf{h}_{\alpha}).$ 
 
 For the converse inclusion $\mathfrak{S}(\mathbf{h}_{\alpha}) \subset \Lambda_{\alpha}$, assume $\mathfrak{h}$ is  a germ in $\mathfrak{S}(\mathbf{h}_{\alpha}).$  Then $\mathfrak{h}=(h, z_0)$, for some branch $h$ of $\mathbf{h}$ and some $z_0$ in the domain of $h.$ Using the definition of $\mathbf{h}_{\alpha}$, for every $z$ in the domain of $h$ we find a holomorphic map $g_z$ locally defined at $z$ such that $(g_z, z)$ is a germ in $\Lambda_{\alpha}$ and $h(z)=g_z(z).$ By Lemma \ref{$(b)$}, $\Lambda_{\alpha}$ is a connected component of the sheaf, and $$(g_z, \zeta) \in \Lambda_{\alpha} \subset \mathfrak{S}(\mathbf{h}), $$ for every $\zeta$ in the domain of $g_z.$ This means that $g_z$ is a branch of $\mathbf{h}$ when restricted to some small neighborhood of $\zeta$, for every $\zeta$ in the domain of $g_z.$ We conclude that $g_z$ is a branch of $\mathbf{h}.$  Since $h(z)=g_z(z)$, $h$ is also a branch of $\mathbf{h}.$ \textcolor{black}{By Lemma  \ref{$(a)$},}  $\mathbf{h}$ is separable with $E(\mathbf{h})=\emptyset.$  Both $g_{z_0}$ and $h$ are branches of $\mathbf{h}$ locally defined at $z_0.$ According to  Definition \ref{asdfasdfwdijpois}, for some neighborhood $V$ of $z_0$, either $h(V)$ and $g_{z_0}(V)$ are disjoint, or else $h=g_{z_0}$ when restricted to $V.$ But $h(z_0)=g_{z_0}(z_0)$. Therefore, $$\mathfrak{h} = (h, z_0) = (g_{z_0},  z_0) \in \Lambda_{\alpha},$$ which shows that $\mathfrak{S}(\mathbf{h}_{\alpha})$ is contained in $\Lambda_{\alpha}$, as desired.

  From Definition \ref{kjhlksbldfhkowd}, Lemma  \ref{$(b)$} and   $\mathfrak{S}(\mathbf{h}_{\alpha})=\Lambda_{\alpha}$ we conclude that   $\mathbf{h}_{\alpha}$ is a global analytic multifunction defined on $U.$ Now the sets $\mathfrak{S}(\mathbf{h}_{\alpha})$ provide a partition of $\mathfrak{S}(\mathbf{h})$, from  which \eqref{lkjkdjewed} follows immediately. The uniqueness of $\mathbf{h}_{\alpha}$ is implicit from its construction, that is, any other $\tilde{\mathbf{h}}$ with $\mathfrak{S}(\tilde{\mathbf{h}})=\Lambda_{\alpha}$ is necessarily given by the same explicit formulation of $\mathbf{h}_{\alpha}.$

  For the existence of the continuation of $\mathfrak{h}=(h, z_0)$ along $\gamma$ we first notice that $\gamma$ projects onto $\varphi\circ \gamma.$  Since $\mathbf{f}_a^{-1}$ is an algebraic multifunction on $\mathbb{C}{\setminus} \{a\}$, any germ of $\mathbf{f}_a^{-1}$ at the first point of $\varphi\circ \gamma$ can be continued along this curve. The continuation yields a curve $\zeta$ in the plane avoiding the set of ramified points $R$ such that $\zeta(t) \in \mathbf{f}_{a}^{-1}(\varphi \circ \gamma(t))$, for every $t,$ with $\zeta(0)=\zeta_0,$ where  $\zeta_0=\varphi(w_0)$  and $w_0=h(z_0).$   Since $\varphi: \mathbb{D}{\setminus} B \to \mathbb{C} {\setminus} R$ is  a covering map, {\color{black} by the lifting theorem, there exists a unique curve $\eta$ in the disk with $\eta(0)=w_0$ such that $\varphi \circ \eta = \zeta$.}  Now $\eta(t) \in \mathbf{h}_{\alpha}(\gamma(t))$ for every $t$, so that by Theorem \ref{jdjfsfdwef} $\eta$ is given by an analytic continuation $\tilde{\gamma}$ of $\mathfrak{h}$ along $\gamma.$ In particular, the continuation exists. 
  \end{proof}

\begin{lem} \label{$(e)$}Every point of $B$ is a removable singularity of $\mathbf{h}_{\alpha},$ which has a unique extension to a global analytic multifunction   $\mathbf{g}_{\alpha}: \mathbb{D} \to \mathbb{D}$ such that  $\mathbf{g}_{\alpha}(z)$ is contained in  $\mathbf{g}(z)$, for every $z$ in $\mathbb{D}.$

\end{lem}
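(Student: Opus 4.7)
The plan is to apply the removable-singularity Theorem \ref{lkhlkjhsiedf} at each branch point $x_j \in B$. The two conditions to check are: first, that the analytic continuation of every germ of $\mathbf{h}_{\alpha}$ around a small loop encircling $x_j$ returns to the same germ; and second, that every branch $F \in \mathcal{B}_j$ obtained on a punctured neighborhood $A_{r,j}$ has a removable singularity at $x_j$. With these in place, Theorem \ref{lkhlkjhsiedf} yields a unique extension of $\mathbf{h}_{\alpha}$ to a global analytic multifunction $\mathbf{g}_{\alpha} \colon \mathbb{D} \to \mathbb{D}$.

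The second condition is straightforward. Every $F \in \mathcal{B}_j$ takes values in $\mathbb{D}$ and is therefore bounded, so Riemann's removable-singularity theorem extends $F$ holomorphically to $A_{r,j} \cup \{x_j\}$ with $F(x_j) \in \overline{\mathbb{D}}$. The maximum modulus principle then forces $F(x_j) \in \mathbb{D}$: otherwise $|F|$ would attain its maximum at the interior point $x_j$, making $F$ constant and contradicting $F(A_{r,j}) \subset \mathbb{D}$.

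The monodromy condition is the crux, and is where the orbifold ramification data $\nu_a$ intervenes. Following the description of analytic continuation in the proof of Lemma \ref{$(c)$}, I would trace, for a small loop $\gamma$ encircling $x_j$, the three induced curves: $\gamma_1 = \varphi \circ \gamma$, a loop winding $\nu_j := \nu_a(y_j)$ times around $y_j := \varphi(x_j) \in R$; $\gamma_2$, obtained by continuing a branch $f$ of $\mathbf{f}_a^{-1}$ along $\gamma_1$; and $\gamma_3$, obtained by lifting $\gamma_2$ through the covering $\varphi$. Since $\mathbf{f}_a^{-1}$, defined by $z^p = (w-a)^q$, is ramified only over $w = a$, the analysis splits according to whether $y_j = a$ or not. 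When $y_j \neq a$, the branch $f$ is locally univalent at $y_j$, so $\gamma_2$ is a closed loop winding $\nu_j$ times around $f(y_j)$. When $y_j = a$, one has $\nu_j = p$ and each branch of $\mathbf{f}_a^{-1}$ transforms by the factor $e^{2\pi i q/p}$ per circuit of $a$; after $p$ circuits the branch returns to itself, and a direct computation shows that $\gamma_2$ is a closed loop winding $q$ times around $f(a) = 0$. In every case the winding of $\gamma_2$ around each point of $R$ it encircles is divisible by the ramification index of $\varphi$ there: the nonperiodicity of $0$ ensures $\mathbf{f}_a^{-1}(0) \cap R = \emptyset$, so in the case $y_j = 0$ the image $f(0)$ lies outside $R$ and creates no obstruction; in the case $y_j = a$ the $q$-fold winding around $0 \in R$ is compatible with the covering because $\nu_a(0) = q$. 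The standard lifting theorem then produces a closed $\gamma_3$ in $\mathbb{D} \setminus B$, and the terminal germ of the continuation equals the initial one.

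Finally, to identify the extension as a sub-multifunction of $\mathbf{g}$, observe that on $A_{r,j}$ the identity $(\varphi(F(z)) - a)^q = \varphi(z)^p$ holds because $F(z) \in \mathbf{g}(z)$; letting $z \to x_j$ and using continuity of $\varphi$ on $\mathbb{D}$ gives $(\varphi(F(x_j)) - a)^q = y_j^p$, so that $F(x_j) \in \varphi^{-1}(\mathbf{f}_a^{-1}(y_j)) = \mathbf{g}(x_j)$; hence $\mathbf{g}_{\alpha}(x_j) \subset \mathbf{g}(x_j)$. I expect the main obstacle to be the bookkeeping of winding numbers when $y_j \in R \setminus \{0, a\}$, where $f(y_j)$ may or may not lie in $R$ depending on the branch $f$, and one must verify in each sub-case that the winding of $\gamma_2$ either matches the ramification index (when $f(y_j) \in R$, forcing $f(y_j) \neq 0$ and thus ramification index $p$) or is unobstructive (when $f(y_j) \notin R$).
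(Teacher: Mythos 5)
Your proposal is correct and follows essentially the same route as the paper: the crux is the same winding-number bookkeeping along the chain $\gamma \mapsto \varphi\circ\gamma \mapsto \gamma_2 \mapsto \gamma_3$, with the Misiurewicz condition used exactly as in the paper to rule out the obstructed case of a ramified preimage of $0$ (the paper's impossible cases $(q,p)$ and $(q,q)$), followed by an appeal to Theorem~\ref{lkhlkjhsiedf}. The one point where you genuinely diverge is the removability and value-identification step: the paper computes the limit of a branch at the branch point explicitly through the local structure of the covering (Lemma~\ref{$(e1)$}), whereas you obtain removability from boundedness via Riemann's theorem plus the maximum principle, and then identify the limiting value by passing to the limit in the algebraic identity; this is a perfectly sound and slightly more elementary alternative, and it buys you a shorter argument at the cost of not exhibiting the limit $y$ geometrically. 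One small slip to correct: the identity satisfied by a branch of $\mathbf{g}$ is $(\varphi\circ F(z))^p=(\varphi(z)-a)^q$, as in \eqref{gkdlskejref}, not $(\varphi(F(z))-a)^q=\varphi(z)^p$; with the exponents attached correctly your limiting argument gives $\varphi(F(x_j))\in\mathbf{f}_a^{-1}(y_j)$, hence $F(x_j)\in\varphi^{-1}\bigl(\mathbf{f}_a^{-1}(\varphi(x_j))\bigr)=\mathbf{g}(x_j)$, exactly as intended.
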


 \begin{proof} The set of branch points $B \subset \mathbb{D}$ is  defined in the first line of the proof of the theorem. Recall that $\varphi(B)= R$ is the set of ramified points.  Since the projection $\varphi$ is regular, $\varphi^{-1}(R)=B$.
 
 Let $x\in \mathbb{D}$ be a branch point.  We will show that the continuation of any germ $\mathfrak{h}=(h, x+r)$ of  $\mathbf{h}_{\alpha}$ along a small circle $\gamma(t) = x +re^{i2\pi t}$  ($t\in [0,1]$) leads back to the same germ $\mathfrak{h}$ at $\gamma(1)$, which is enough to conclude that every branch point $x$ is a removable singularity of $\mathbf{h}_{\alpha}$,  according to  Theorem \ref{lkhlkjhsiedf}.

Every image $y\in \mathbf{h}_{\alpha}(x)$ of the branch point $x$ comes from a sequence $x\mapsto a_j \mapsto b \mapsto y$ where $\varphi$ sends $x$ to a ramified point $a_j$, $b\in \mathbf{f}_a^{-1}(a_j)$ and $y\in \varphi^{-1}(b):$ 
\begin{equation}\label{sdfsdfskkdfdw}
    \begin{tikzcd}
        (\mathbb{D}, x) \arrow{r}{\mathbf{h}_{\alpha}}\arrow{d}[swap]{\varphi} & (\mathbb{D}, y) \arrow{d}{\varphi} \\
        (\mathbb{C}, a_j) \arrow{r}[swap]{\mathbf{f}_a^{-1}} & (\mathbb{C}, b)
    \end{tikzcd}
\end{equation}

  In Lemma \ref{$(e1)$} we will make the choice of $b$ and $y$ unique, depending only on $x$ and $\mathfrak{h}=(h,x+r).$  

\textcolor{black}{By  Lemma \ref{$(d)$},} if $\zeta:[0, 1] \to U\cup \{x\}$ is a curve starting at $x+r$ and terminating at $x,$ then the analytic continuation of $\mathfrak{h}$ along the restriction of $\zeta$ to any closed subinterval $[0, t] \subset [0, 1)$ exists. If we take $t\to 1$ we conclude that there exists an extension of the domain of $h$ to an open set $V\subset U$ with a sequence $x_k\in V$ converging to $x$, such that $V$ has diameter $r+\epsilon$, for $\epsilon>0$ arbitrarily small. (Alternatively, we may apply Corollary \ref{kjhlkjhlskjed} to reach the same conclusion, requiring $V$ to be simply connected).

\begin{lem} \label{$(e1)$}
Under the above conditions on $V$, the limit of $h(x_k)$ as $k\to \infty$ exists and is independent of the particular choice of $V$ and  $\{x_k\}.$   We denote this limit by $y$ and let $b=\varphi(y).$
\end{lem}

\begin{proof}
The local branch $h$ is given by a composition $\psi \circ f^{-1} \circ \varphi$, where $f^{-1}$ is a univalent branch of $\mathbf{f}_a^{-1}$, $\psi$ is a univalent branch of $\varphi^{-1}$. Since $\varphi$ is continuous, $\varphi(x_k)$ converges to $a_j =\varphi(x).$ The sequence $f^{-1} \circ \varphi (x_k)$ also converges to a point $b$ which is independent of $\{x_k\}$ and $V$: if $a=a_j$ then take $b=0$; otherwise, $f^{-1}$ can be extended to a domain that includes $a_j$, for then $a_j$ is in the closure of the domain of $f^{-1}$. In this case, $b=f^{-1}(a_j).$

Since $f^{-1}$ is branch of $\mathbf{f}_a^{-1}$, which is a continuous set-valued function in the Hausdorff topology, if the diameter of $V$ is small, then $f^{-1} \circ \varphi(V)$ is also small.  Hence we may assume that $f^{-1}\circ \varphi(V)$ is contained in a connected $W$, with $b\in W$, such that $W$ is evenly covered by $\varphi$, that is, each connected component of $\varphi^{-1}(W)$ projects onto $W$ by means of a proper map. By applying $\psi$, which is a local inverse of $\varphi$, we conclude that $h(V)$ is contained in one of these components, say $C_0.$ If $\nu(b)=1$, by reducing $V$ (and consequently $W$), we may assume that $\varphi: C_0 \to W$ is bi-holomorphic. It is clear that $h(x_k)$ converges to $(\varphi|_{C_0})^{-1}(b)$, which is independent of $V$ and $\{x_k\}.$ 

If $\nu(b)\neq 1,$ then every point of $\varphi^{-1}(b)$ is a branch point. Since $B$ is locally finite and the group of deck transformations $\Gamma$ acts transitively on the components of $\varphi^{-1}(W)$, with $\Gamma. B=B,$ by reducing $V$  if necessary, we may assume that every component of $\varphi^{-1}(W)$ has only one branch point. Let $y$ be the only branch point in $C_0$ (notice that $y$ does not depend on $V$ and $\{x_k\}$). As a consequence of B\"ottcher's Theorem, $\varphi$ is locally conjugate to $z^{\nu(b)}$ (up to translation) near $y$, and $\varphi(y)=b.$  Hence the image of every point of the sequence $b_k=f^{-1} \circ \varphi(x_k)$ under $(\varphi|_{C_0})^{-1}$ is a set of $\nu(b)$ points converging to $\{y\}$ in the Hausdorff topology. Thus, 
\[h(x_k) \subset (\varphi|_{C_0})^{-1}(b_k) \to \{y\}. \]

\noindent Hence $h(x_k) \to y$ in every case. The proof of Lemma \ref{$(e1)$} is complete.  \end{proof}

\noindent{\it Continuation of the proof of  Lemma \ref{$(e)$}}. We are ready to show that each branch point $x$ is a removable singularity of $\mathbf{h}_{\alpha}.$ Combining all values of $(\nu(a_j), \nu(b))$ yields $(p, p)$, $(p, q)$, $(p, 1)$, $(q, p)$, $(q, q)$ and $(q, 1).$ As we shall see, not every combination is possible.  In all possible cases, we will prove that the continuation of $\mathfrak{h}=(h, x+r)$ along the small circle $\gamma(t) = x +re^{i2\pi t}$ leads back to $\mathfrak{h}.$

Case $(p, p)$ means that $\nu(a_j)=p$ and $\nu(b)=p$. By Definition \ref{lkjlsjodijpoiefd}, this  implies $a_j\neq 0, a_j\neq a$ and $b=a_k\neq 0.$  Since the local degree of $\varphi$ at $x$ is $\nu(a_j)=p$, the image curve $\tilde{\gamma} = \varphi \circ \gamma$ has winding number

$$n(a_j, \tilde{\gamma}) = \frac{1}{2\pi i} \int_{\tilde{\gamma}} \frac{1}{z-a_j} dz =p.$$

\noindent Since $r$ is small and  $a_j\neq a,$  the univalent map $f^{-1}$ can be extended to  a connected neighborhood of $a_j$ that includes $\{ \tilde{\gamma}\}.$ The image curve $\tilde{\eta}=f^{-1}\circ \tilde{\gamma}$ also has winding number $n(\tilde{\eta}, b)$ equal to $p.$ Now $b=a_k$ is a ramified point with index $\nu(a_k)=p$, and the local degree of $\varphi|_{C_0}$ at $y$  is precisely $p,$ for then $\varphi|_{C_0}$ is conjugate to $z^p$, up to a translation, as described in the \textcolor{black}{proof of Lemma \ref{$(e1)$}.} (The following term \emph{yields} has a rigorous meaning, see  Definition \ref{jhlkjhsoieuhdsd}). Hence the analytic continuation of the germ $$(\psi, f^{-1} \circ \varphi(x+r)) $$ along  $\tilde{\eta}$ yields a curve $\eta$ in $C_0$ around $y$ which has winding number equal to $n(\eta, y)=1.$ 
Therefore, in the case $(\nu(a_j), \nu(b)) = (p, p)$ we have obtained a sequence of curves $(\gamma, \tilde{\gamma},  \tilde{\eta},   \eta)$  with a respective finite sequence of winding numbers
 \begin{equation}\label{lkjlksjdfwefsds} (n(\gamma, x),  n(\tilde{\gamma}, a_j), n(\tilde{\eta}, b), n(\eta, y)) \end{equation} 
 
 \noindent which equals  $(1, p, p, 1).$
\noindent The case $(\nu(a_j), \nu(b)) = (q, q)$ is impossible, for then $a_j =0$ and $b=0,$ and the critical point could not be strictly preperiodic (Misiurewicz). Case $(q, p)$ is also impossible, by a similar reasoning. In all other cases, we can  obtain different sequences of curves and winding numbers, which we indicate using the same notation from \eqref{lkjlksjdfwefsds}.   In case $(p, q)$, for example, we only have to take into account that in the equation $(w-c)^q = z^p$  which defines $\mathbf{f}_a$, the continuation of a curve $\zeta$ with winding number $n(\zeta, 0)=q$ in the $z$-plane yields a curve $\tilde{\zeta}$ with winding number $n(\tilde{\zeta}, a)=p$ in the $w$-plane.  We summarize as follows: 

 \begin{enumerate} \item Case $(p , p)$: sequence of winding numbers \eqref{lkjlksjdfwefsds} given by  $(1, p, p, 1).$
   \item Case $(p, q):$ sequence of winding numbers $(1, p, q, 1).$
 \item Case $(p, 1)$: sequence  of winding numbers $(1, p, p, p).$
 \item Case $(q, p)$: impossible (because $a$ is Misiurewicz).
 \item Case $(q, q)$: idem. 
 \item Case $(q, 1)$: sequence of winding numbers $(1, q, q, q).$
 
 \end{enumerate}

 As we shall see, what is relevant for us  is that the terminating curve $\eta$ is closed in all situations (this is a consequence of the sequence of winding numbers).  By Lemma \ref{$(e1)$} and its proof, it is implicit that $(h, x+r)$ is defined for every $r>0$ sufficiently small, for then the domain of $h$ can be extended by analytic continuation to an open set $V$ containing every interval $(0, r]$, for $r>0$ sufficiently small. The dependence of $\gamma$ and $\eta$ on $(h, x+r)$ can be made explicit by writing $\gamma_r$ and $\eta_r.$ While $h$ is the same map, we may take $r\to 0$ and check that $\eta_r$ converges uniformly to $y.$ Moreover,   $\eta_{r}(t)\in \mathbf{h} (\gamma_r(t))$, for every $t.$ Since $\mathbf{h}$ is a separable multifunction and $\eta_r(0)= h(\gamma_r(0)),$ we may apply Theorem \ref{jdjfsfdwef} to conclude that $\eta_r$ is given by the analytic continuation of the germ $\mathfrak{h}=(h, x+r)$ of $\mathbf{h}_{\alpha}$ along $\gamma_r$. Since the winding number is an integer, the curve $\eta_r$ must be closed, and  the analytic continuation of $(h, x+r)$ along $\gamma_r$ leads back to itself. We conclude from Theorem \ref{lkhlkjhsiedf} that each $\mathbf{h}_{\alpha}$ has a unique extension to a global analytic multifunction $\mathbf{g}_{\alpha}: \mathbb{D} \to \mathbb{D}.$

 The values of the extended $\mathbf{g}_{\alpha}$ at every singularity $x=x_j$ are determined by Theorem \ref{lkhlkjhsiedf}.  According to this result, every  branch of $\mathbf{h}_{\alpha}$ locally defined at $x_j+r$ can be extended to a punctured neighborhood of $x_j$, with a removable singularity at $x_j$, thus satisfying the conditions of \textcolor{black}{Lemma \ref{$(e1)$}} with $y=y_j.$  Hence the set $\mathbf{g}_{\alpha}(x_j)$ is given by all 
 \[h(x_j)=y_j \in \mathbf{g}(x_j), \]
  \noindent such that $(h, x_j +r)$ is a germ of $\mathbf{h}_{\alpha}.$ The last statement of Lemma \ref{$(e)$} follows from this observation. 
 
 The proof of Lemma \ref{$(e)$}  is complete. 
\end{proof}

\begin{lem} \label{$(f)$}
The family of global analytic multifunctions $\mathbf{g}_{\alpha}$ satisfies \eqref{fdfsdfsdfsdf}. 
\end{lem}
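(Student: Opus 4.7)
The plan is to deduce both equalities in \eqref{fdfsdfsdfsdf} directly from Lemmas~\ref{$(a)$}--\ref{$(e)$}, with essentially all the real content concentrated at the branch set $B$. The two equalities will be obtained in parallel: the same argument that identifies the global branch at a point of $B$ establishes both the pointwise and the sheaf version.

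I would start with the pointwise equality $\mathbf{g}(z)=\bigcup_{\alpha}\mathbf{g}_{\alpha}(z)$. The inclusion $\supset$ is immediate from the last statement of Lemma~\ref{$(e)$}. For the reverse inclusion, if $z\in U=\mathbb{D}\setminus B$, then $\mathbf{g}(z)=\mathbf{h}(z)$ and Lemma~\ref{$(d)$} yields $\mathbf{h}(z)=\bigcup_{\alpha}\mathbf{h}_{\alpha}(z)=\bigcup_{\alpha}\mathbf{g}_{\alpha}(z)$, since each $\mathbf{g}_{\alpha}$ extends $\mathbf{h}_{\alpha}$. Suppose now that $z=x_j\in B$ and fix $y\in \mathbf{g}(x_j)$. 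By the very definition of a holomorphic multifunction, there exists a local branch $g$ of $\mathbf{g}$ on some region $V\ni x_j$ with $g(x_j)=y$. Because $B$ is locally finite, I may shrink $V$ to a disk with $V\cap B=\{x_j\}$; then $V\setminus\{x_j\}$ is connected and $g$ restricts there to a branch of $\mathbf{h}$. The associated lift $\{(g,z):z\in V\setminus\{x_j\}\}$ is a connected subset of $\mathfrak{S}(\mathbf{h})\subset \mathfrak{S}_{U}$, hence lies in a single component $\Lambda_{\alpha}$, so $g|_{V\setminus\{x_j\}}$ is a branch of $\mathbf{h}_{\alpha}$. By Lemma~\ref{$(e)$} this branch extends across the removable singularity $x_j$ to a branch of $\mathbf{g}_{\alpha}$ on $V$; uniqueness in Theorem~\ref{lkhlkjhsiedf} identifies the extension with $g$ itself, giving $y=g(x_j)\in \mathbf{g}_{\alpha}(x_j)$.

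For the sheaf equality $\mathfrak{S}(\mathbf{g})=\bigcup_{\alpha}\mathfrak{S}(\mathbf{g}_{\alpha})$, the inclusion $\supset$ again follows from Lemma~\ref{$(e)$}, since $\mathbf{g}_{\alpha}(z)\subset \mathbf{g}(z)$ implies that every branch of $\mathbf{g}_{\alpha}$ is a branch of $\mathbf{g}$. Conversely, let $(g,z_0)\in \mathfrak{S}(\mathbf{g})$. If $z_0\in U$ then $(g,z_0)$, after restricting the domain of $g$ to $U$, belongs to $\mathfrak{S}(\mathbf{h})$ and therefore lies in some $\Lambda_{\alpha}=\mathfrak{S}(\mathbf{h}_{\alpha})\subset \mathfrak{S}(\mathbf{g}_{\alpha})$ by Lemma~\ref{$(d)$}. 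If $z_0\in B$, the construction of the previous paragraph already exhibits an index $\alpha$ such that $g$ coincides near $z_0$ with a branch of $\mathbf{g}_{\alpha}$, so $(g,z_0)\in \mathfrak{S}(\mathbf{g}_{\alpha})$.

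The main technical delicacy I anticipate lies in the step at $x_j\in B$: verifying that a locally prescribed branch $g$ of $\mathbf{g}$ actually matches the unique extension of $g|_{V\setminus\{x_j\}}$ supplied by Theorem~\ref{lkhlkjhsiedf}. This uses both the uniqueness clause of that theorem and the separability of $\mathbf{h}$ from Lemma~\ref{$(a)$}, which forces $g$ and the extension to agree on a punctured neighborhood of $x_j$, and hence at $x_j$ itself by continuity of the holomorphic extension. Once this identification is in place, both equalities follow without further work.
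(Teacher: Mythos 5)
Your proposal is correct and follows essentially the same route as the paper: one inclusion from Lemma~\ref{$(e)$}, and for the other, restricting a local branch of $\mathbf{g}$ at a branch point $x_j$ to the punctured disk, recognizing it as a branch of some $\mathbf{h}_{\alpha}$, and invoking Theorem~\ref{lkhlkjhsiedf} to extend it across the removable singularity to a branch of $\mathbf{g}_{\alpha}$ that agrees with $g$ by continuity. The only differences are cosmetic: you justify explicitly (via connectedness of the germ set) why the restricted branch lies in a single $\Lambda_{\alpha}$, and you prove the pointwise equality in parallel rather than deducing it from the sheaf equality as the paper does.
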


 \begin{proof} The inclusion $\cup_{\alpha} \mathfrak{S}(\mathbf{g}_{\alpha}) \subset \mathfrak{S}(\mathbf{g})$ follows from Lemma \ref{$(e)$}, for then every branch of $\mathbf{g}_{\alpha}$ is a branch of $\mathbf{g}.$  For the other inclusion, let $g$ be  a local branch of $\mathbf{g}$ defined on the open ball  $D$ with small radius $r>0$ and centered at a branch point $x_j.$ Let $g^*$ denote the restriction of $g$ to the punctured disk $D^*= D{\setminus} \{x_j\}.$ Then $g^*$ is a branch of some $\mathbf{h}_{\alpha}.$ By Theorem \ref{lkhlkjhsiedf}, $g^*$ coincides with some $F\in \mathcal{B}_j$  on $D^*$, which is well known to have  a removable singularity at $x_j$, being  a branch of $\mathbf{g}_{\alpha}$ defined on $D.$ Hence every local branch of $\mathbf{g}$ at $x_j$ is indeed a local branch of some $\mathbf{g}_{\alpha}.$ The first equality in \eqref{fdfsdfsdfsdf}  is satisfied. The second follows  from the first.
 \end{proof}

\begin{lem} \label{$(g)$}   Every germ $\mathfrak{f}$ of $\mathbf{g}_{\alpha}$ can be continued along any curve $\gamma: [a ,b] \to \mathbb{D}$   starting at $\pi(\mathfrak{f}).$

\end{lem}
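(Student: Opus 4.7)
The plan is to reduce this to Lemma~\ref{$(d)$}, which provides continuations along curves avoiding the branch locus $B$, combined with Lemma~\ref{$(e)$}, which says that each branch of $\mathbf{h}_\alpha$ defined on a punctured neighborhood of a branch point extends across it to a branch of $\mathbf{g}_\alpha$. First I would note that since $B$ is locally finite and $\gamma([a,b])$ is compact in $\mathbb{D}$, the set $T = \gamma^{-1}(B) \cap [a,b]$ is finite, say $T = \{t_1 < \cdots < t_k\}$ (possibly including the endpoints). Write $x_i = \gamma(t_i)$ and choose pairwise disjoint open disks $D_i \subset \mathbb{D}$ with $D_i \cap B = \{x_i\}$, small enough that Theorem~\ref{lkhlkjhsiedf}, applied as in Lemma~\ref{$(e)$}, guarantees every germ of $\mathbf{h}_\alpha$ at a point of $D_i \setminus \{x_i\}$ extends uniquely to a branch of $\mathbf{g}_\alpha$ defined on all of $D_i$. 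Then I would pick $\delta > 0$ so small that the subintervals $I_i := [t_i - \delta, t_i + \delta] \cap [a,b]$ satisfy $\gamma(I_i) \subset D_i$ and remain pairwise disjoint.

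On each closed subinterval of $[a,b]$ lying in the complement of the $I_i$, the curve $\gamma$ takes values in $U = \mathbb{D} \setminus B$, and Lemma~\ref{$(d)$} directly produces the required analytic continuation along such a piece. On each $I_i$, the germ available at $\gamma(t_i - \delta)$ coming from the previous piece (or, when $t_i = a$, the given initial germ $\mathfrak{f}$) determines via Lemma~\ref{$(e)$} and Theorem~\ref{lkhlkjhsiedf} a unique branch $F_i$ of $\mathbf{g}_\alpha$ on $D_i$, and then the curve $t \mapsto (F_i, \gamma(t))$ is a continuous analytic continuation that crosses the branch point $x_i$ without obstruction. Concatenating these partial continuations yields the desired continuation of $\mathfrak{f}$ along $\gamma$.

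The only delicate point is checking that the germ reaching the boundary of each $I_i$ via Lemma~\ref{$(d)$} genuinely agrees with the germ produced by the extension $F_i$ at that boundary point, so that the concatenation is continuous in the sheaf topology. This is exactly the content of the uniqueness in Theorem~\ref{lkhlkjhsiedf}: since $F_i$ is the unique branch of $\mathbf{g}_\alpha$ on $D_i$ extending any given germ of $\mathbf{h}_\alpha$ on $D_i \setminus \{x_i\}$, it must coincide on $D_i \setminus \{x_i\}$ with the branch of $\mathbf{h}_\alpha$ arriving from Lemma~\ref{$(d)$}, and the corresponding germs therefore agree. With this matching confirmed, the concatenation step is routine and no further singularities have to be resolved, since by construction every branch point of $\varphi$ met by $\gamma$ has already been absorbed into the $I_i$.
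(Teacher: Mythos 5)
Your proposal follows essentially the same route as the paper's proof: split $[a,b]$ at the finitely many branch-point crossings, continue with Lemma~\ref{$(d)$} on the pieces lying in $\mathbb{D}\setminus B$, and cross each branch point using the unique local extension furnished by Theorem~\ref{lkhlkjhsiedf} via Lemma~\ref{$(e)$}, with that uniqueness guaranteeing the germs match at the junctions. The one caveat, that $\gamma^{-1}(B)$ (as opposed to $\gamma([a,b])\cap B$) need not literally be finite, is a simplification the paper's own argument makes as well, so your proof is correct at the same level of rigor and is not a genuinely different approach.
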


 \begin{proof}
 We need to determine a continuous function $\tilde{\gamma}: [0, 1] \to \mathfrak{S}(\mathbf{g}_{\alpha})$ such that $\pi(\tilde{\gamma}(t)) = \gamma(t)$ on $[0, 1]$ and $\tilde{\gamma}(0)=\mathfrak{f}.$ 
The germ $\mathfrak{f}$ is determined by a local branch $f$ at $x=\gamma(0).$ Let $y=f(x).$ In a preliminary case, the curve $\gamma$ does not intersect the set of branch points $B$; then the continuation exists  by Lemma \ref{$(d)$}, for then $\mathbf{h}_{\alpha}$ coincides with $\mathbf{g}_{\alpha}$ on $\mathbb{D} {\setminus} B.$  

 In a second case, the only branch points in $\gamma$ are the starting and terminal points. It is easy to determine a continuation $\tilde{\gamma}(t)$ for $t$ sufficiently close to $0.$ Since $\gamma$ contains no branch points on $(0, 1)$, there exists a unique  extension of  $\tilde{\gamma}$ to a continuation along $\gamma|_{[0,t]}$ for $t$ arbitrarily close to $1.$ To define the value of the continuation near $1$ we apply Theorem  \ref{lkhlkjhsiedf}: if $t^*$ is sufficiently close to $1$, then there exists a unique branch $F$ of $\mathbf{g}_{\alpha}$ locally defined at $\gamma(1)$ such that $(F, \gamma(t^*)) = \tilde{\gamma}(t^*).$ Since any two analytic continuations are either identical or else differ for every $t,$ we have $(F, \gamma(t)) = \tilde{\gamma}(t)$ as long as $t<1$ and $\gamma(t)$ is in the domain of $F.$ If we set $\tilde{\gamma}(1)=(F, \gamma(1))$, the result is a continuous curve $\tilde{\gamma}$ in the sheaf; in other words, an analytic continuation along $\gamma.$

 In the general case, the compact set $\gamma([0,1])$ contains finitely many branch points at $\gamma(t_i)$ for $t_1 < t_2 < \cdots < t_n.$ By working with the restriction to each consecutive interval $[t_i, t_{i+1}]$ we reduce the analysis to the previous case, obtaining an analytic continuation along the whole curve. 
 \end{proof}

 This completes the proof of Theorem~\ref{gdsadgwed}, as each conclusion of the statement has been established through the preceding lemmas.

\section{Proof of Theorem \ref{ljksoijdfkwdfsed}}

\begin{proof}[Proof of Theorem \ref{ljksoijdfkwdfsed}]\label{adfaoihpoiapoiasudfasdf} By definition, every branch is holomorphic and defined on a region, that is, a nonempty, open and connected subset of the plane (see page \pageref{lkoikdkdkciied}). Every branch $f$ of $\mathbf{g}$ determines a connected set

 $$\Lambda = \{ (f,z) : z\in \operatorname{dom}(f)\}$$

\noindent which is contained in some component of the sheaf. By Theorem \ref{gdsadgwed}, this component must be some $\mathfrak{S}(\mathbf{g}_{\alpha})$. Hence $f$ is a branch of $\mathbf{g}_{\alpha}.$ From Theorem \ref{gdsadgwed} we know that  every germ $\mathfrak{f}$ in $\mathfrak{S}(\mathbf{g}_{\alpha})$ can be continued along any curve in $\mathbb{D}$ starting at $\pi(\mathfrak{f}).$ It follows from  Corollary \ref{kjhlkjhlskjed} that $f$ can be extended to a branch $F$ defined on $\mathbb{D}.$ 

Using Diagrams \eqref{sdfsdfskkdfdw} and \eqref{sdfsdfskkdfdwlkjlksded} we can see that if $x\in\mathbb{D}$ projects to the ramified point $a_j=0$, in which case the ramification index is $\nu(a_j)=q$, then $F'(x)=0$ and $F$ is locally conjugate to $z^q$ at $x$ (up to a translation), as a consequence of B\"ottcher's Theorem. Indeed, $\nu(0)=q$ implies $\varphi'(x)=0$ with local degree equal to $q.$ Besides, every local branch of $\mathbf{f}_a^{-1}$ is univalent on a neighborhood of $0,$ for then $0$ is not a singular point of $\mathbf{f}_{a}^{-1},$ because $a$ is Misiurewicz. Since the point $b$ of Diagram \eqref{sdfsdfskkdfdw} is a pre-image of $0$ under $\mathbf{f}_a^{-1}$, $b$ is not a ramified point (recall Definition \ref{lkjlhdoiehfeddd}), otherwise $0$ would be contained in a cycle, which is impossible, since $a$ is Misiurewicz. Hence $\nu(b)=1$ and some neighborhood $V$ of $b$ is evenly covered by $\varphi$, in the sense that $\varphi$ projects every component of $\varphi^{-1}(V)$ onto $V$ by means of a bi--holomorphic map (recall that $\varphi$ is a regular branched covering and this property holds at every point which is not ramified). We conclude that for every $z$ in a neighborhood $W$ of $x,$ 
\[F(z) = \psi \circ f^{-1} \circ \varphi(z), \]
\noindent where $f^{-1}$ is a univalent branch of $\mathbf{f}_a^{-1}$, $\psi$ is a univalent branch of $\varphi^{-1}$ and $\varphi'(x)=0$, with local degree equal to $\nu(0)=q.$ Since $F'(x)=0,$ $F$ is not injective on any neighborhood of $x$, and therefore cannot be a local isometry at $x.$ By the Schwarz-Pick Lemma (the version on Riemann surfaces), $F$ is a strict contraction of the hyperbolic metric.  
\end{proof}

\section{Uniform contraction on compact sets}\label{asdfasdfohasdfasd}

Recall from  Definition \ref{lkjlsjodijpoiefd} that the orbifold metric  of the canonical orbifold associated to a Misiurewicz point $a$ is a conformal metric defined on the complement $\mathbb{C}{\setminus} R$ of the locally finite set of ramified points $R$.

\begin{thm} \label{kljhlsdiedf} Suppose that $a$ is a Misiurewicz point for the family \eqref{lkjdllkjhalsoed}. Let $\rho$ denote the orbifold metric of $\mathbb{C}{\setminus}R$, where $R$ denotes the set of ramified points $a_j$ of the canonical orbifold.  For every compact  $K \subset \mathbb{C}$  such that $K{\setminus} R$ is nonempty, there exist $\eta_{K}$ in $(0,1)$ and $r>0$ such that any local branch of $\mathbf{f}_{a}^{-1}$ defined on a small region containing $\zeta_0$ in $ K{\setminus} R$ can be extended to a univalent  branch $g$ of $\mathbf{f}_a^{-1}$ which is defined on the open ball $B(\zeta_0, r_0),$ where 
\[r_0=\min\{r, d(\zeta_0, R) \}.\]
\noindent Moreover,   $\|g'(w)\|_{\rho} < \eta_{K},$ for every $w$ in $B(\zeta_0, r_0).$ 
\end{thm}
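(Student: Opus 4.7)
The plan is to pull the situation back to the Poincar\'e disk via $\varphi$, where Theorem~\ref{ljksoijdfkwdfsed} provides a pointwise strict Schwarz--Pick contraction for every global branch of $\mathbf{g}=\varphi^{-1}\circ\mathbf{f}_a^{-1}\circ\varphi$, and then to promote that inequality to a uniform one by a compactness argument in $\mathbb{D}$. The existence of the univalent extension $g$ is essentially automatic: the unique critical value of $\mathbf{f}_a$ is $a\in R$, so the Euclidean ball $B(\zeta_0,r_0)$ is simply connected and disjoint from the singular locus of $\mathbf{f}_a^{-1}$, and the given local branch extends uniquely to a univalent $g$ on the whole ball by the monodromy principle (Corollary~\ref{kjhlkjhlskjed}).

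To fix the uniform constant, I would enclose $K$ in a bounded open set $U$ with $\operatorname{dist}(K,\partial U)>0$ and choose $r>0$ so that $B(\zeta,r)\subset U$ for every $\zeta\in K$; then Lemma~\ref{alkjacbcbkhlpsdf} guarantees that $R\cap\overline{U}$ is finite. Because $\varphi$ is locally conjugate to $z\mapsto z^{\nu_j}$ near each branch point of index $\nu_j$, a direct computation shows that $\rho\asymp|w-a_j|^{-(\nu_j-1)/\nu_j}|dw|$ near each ramified point, and since $\int_0^{\epsilon}t^{-(\nu_j-1)/\nu_j}\,dt$ converges, the set $U\setminus R$ is $\rho$-connected and has finite orbifold diameter $D<\infty$. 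Fixing a basepoint $\zeta^{\ast}\in U\setminus R$ and a preimage $\tilde\zeta^{\ast}\in\varphi^{-1}(\zeta^{\ast})$, the closed Poincar\'e ball
\[
L := \overline{B_{\rho_1}(\tilde\zeta^{\ast},\,D+1)}\subset\mathbb{D}
\]
is compact. Lifting any $\rho$-rectifiable path from $\zeta^{\ast}$ to $w\in U\setminus R$ through the covering $\varphi\colon\mathbb{D}\setminus B\to\mathbb{C}\setminus R$ yields a preimage $\tilde w\in L$, so $\varphi(L)\supset U\setminus R\supset K\setminus R$.

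The contraction estimate now follows from Theorem~\ref{ljksoijdfkwdfsed}: the global lift $F\colon\mathbb{D}\to\mathbb{D}$ of $g$ has $F'(x)=0$ at every $\varphi$-preimage of $0$, hence $F$ is not a M\"obius transformation and Schwarz--Pick gives $\|F'(z)\|_{\rho_1}<1$ pointwise. Since $\varphi$ is a local isometry between $(\mathbb{D}\setminus B,\rho_1)$ and $(\mathbb{C}\setminus R,\rho)$, the identity $\|g'(w)\|_\rho=\|F'(\tilde w)\|_{\rho_1}$ holds for any lift $\tilde w$ of $w$; because elements of the deck group $\Gamma$ act as Poincar\'e isometries, this value is independent of the chosen lift. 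The induced downstairs quantity
\[
M(w) := \max\bigl\{\|h'(w)\|_\rho : h \text{ is a branch of } \mathbf{f}_a^{-1} \text{ at } w\bigr\}
\]
is therefore well defined and continuous on $\mathbb{C}\setminus R$; moreover, since the upstairs function $\|F'\|_{\rho_1}$ is continuous on all of $\mathbb{D}$ and strictly less than $1$, $M$ extends continuously across $R$ with values still strictly less than $1$. Compactness of $\varphi(L)$ then yields
\[
\eta_K := \sup_{w\in\varphi(L)} M(w) < 1,
\]
and the desired bound $\|g'(w)\|_\rho\le\eta_K<1$ follows for every $w\in B(\zeta_0,r_0)\subset\varphi(L)$.

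The main obstacle is the verification that $U\setminus R$ has finite orbifold diameter together with the fact that the seemingly infinite family of global lifts $F$ descends, via $\Gamma$-invariance, to a single intrinsic continuous function $M$ on the base. Both rest on the local B\"ottcher form of $\varphi$ at branch points, which simultaneously controls the blow-up of $\rho$ near ramified points and constrains the holomorphic $F$ so that $\|F'\|_{\rho_1}$ remains continuous (and hence uniformly less than $1$ on compact sets in $\mathbb{D}$) across the branch locus.
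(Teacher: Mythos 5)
Your overall skeleton (lift to $\mathbb{D}$, invoke the strict Schwarz--Pick contraction of the global lifts from Theorem~\ref{ljksoijdfkwdfsed}, then get uniformity from compactness of a suitable set in $\mathbb{D}$) is the same as the paper's, but the step that actually carries the theorem is asserted rather than proved. You define $M(w)=\max_h\|h'(w)\|_\rho$ and claim that, because each fixed lift satisfies $\|F'\|_{\rho_1}<1$ and is continuous on all of $\mathbb{D}$, the function $M$ ``extends continuously across $R$ with values still strictly less than $1$,'' so that $\sup_{\varphi(L)}M<1$. That inference is not valid as stated: on $\mathbb{D}\setminus B$ the quantity $M\circ\varphi$ is a supremum over the \emph{infinite} family of global branches $F$ (every global branch induces a local branch of $\mathbf{f}_a^{-1}$ at every unbranched point), and a supremum of infinitely many continuous functions, each individually $<1$ on the compact set $L$, need not be $<1$, nor need it extend continuously across the branch locus. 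What is missing is precisely a local finiteness statement near each ramified point: that, modulo postcomposition by deck transformations (which do not change $\|F'\|_{\rho_1}$), only finitely many lifted branches are involved on a fixed lifted neighborhood of each $x_j$ --- this is exactly what the paper establishes in Lemma~\ref{ojpoisdfweadafd} (the count $\#\mathcal{G}_1=pq$, resting on Lemma~\ref{lkhalosiddwesdfc} to get a relatively compact lifted disk), before running the finite-cover compactness argument. Alternatively your route can be repaired by a normal-families argument: if $\|F_n'(z_n)\|_{\rho_1}\to 1$ with $z_n\in L$, postcompose with deck transformations so that $F_n(z_n)$ stays in a fixed compact subset of $\mathbb{D}$ (possible since the downstairs images lie in the bounded set $\mathbf{f}_a^{-1}(\varphi(L))$), pass to a locally uniform limit, and derive a contradiction between Schwarz--Pick equality and the vanishing of the limit's derivative on $\varphi^{-1}(0)$. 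You flag this point as ``the main obstacle,'' but the justification you offer (the B\"ottcher form of $\varphi$ makes $\|F'\|_{\rho_1}$ continuous) only addresses the trivial part --- continuity of each \emph{fixed} lift --- not the uniformity over the family, which is the content of the theorem.

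A second, smaller gap is the univalence of the extended branch $g$ on $B(\zeta_0,r_0)$. The monodromy principle (Corollary~\ref{kjhlkjhlskjed}) gives the extension of the local branch to the ball, but not injectivity: a branch of $(w-a)^{q/p}$ on a simply connected domain avoiding $a$ can fail to be univalent if the argument of $w-a$ winds by more than $2\pi p/q$ (spiral-shaped domains). Here one must use that the domain is a round ball disjoint from $a\in R$, hence convex, so that $\arg(w-a)$ varies by less than $\pi$ and, since $q/p<1$, the branch is injective --- this is the ``polar representation'' computation the paper alludes to in its proof of Theorem~\ref{kljhlsdiedf}. Easy to fix, but it does not follow from monodromy alone as written.
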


 The proof will be given after some lemmas.

A  subset $\mathcal{D}$ of the plane is a \emph{regular disk} if $\mathcal{D}=g(D)$, where $D$ is an open ball and $g$ is a univalent map defined on a region containing $\overline{D}.$

\begin{lem}\label{lkhalosiddwesdfc} Let $\mathcal{D}$ be a regular disk in $\mathbb{C}.$ Suppose that  $\mathcal{D}$ contains a unique ramified point $a_j.$  Let $\varphi$ be the regular branched covering of Theorem \ref{gdsadgwed}.
 Given $x_j$ in the pre-image of $a_j$ under $\varphi$,   let $\tilde{\mathcal{D}}_j$ be the unique connected component of  $\varphi^{-1}(\mathcal{D})$ which contains $x_j.$ Then the closure of $\tilde{\mathcal{D}}_j$ is contained in $\mathbb{D}$ and
 $\varphi: \tilde{\mathcal{D}}_j \to \mathcal{D}$
  \noindent is a proper map of degree $\nu_a(a_j),$ with only one branch point at $x_j.$ In particular, $\varphi$ restricts to a $\nu_a(a_j)$-fold covering map from $\tilde{\mathcal{D}}_j{\setminus}\{x_j\}$ onto $\mathcal{D}{\setminus} \{a_j\}.$

\end{lem}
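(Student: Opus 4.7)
The plan is to split $\mathcal{D}$ into a small disk around $a_j$, on which B\"ottcher's theorem yields the local normal form of $\varphi$, together with the complementary annulus on which $\varphi$ restricts to an honest unramified covering, and then to glue these two pieces across their common boundary to identify $\tilde{\mathcal{D}}_j$ as a $d$-fold branched cover of $\mathcal{D}$, where $d := \nu_a(a_j)$.

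First, since $\varphi$ is a regular branched covering, its local degree at the preimage $x_j$ of the ramified point $a_j$ equals the ramification index $d$. B\"ottcher's theorem then provides holomorphic coordinates in which $\varphi$ takes the form $w \mapsto w^d$ near $x_j$, so I can choose a regular disk $\mathcal{D}_0 \subset \mathcal{D}$ centered at $a_j$, small enough that the connected component $\tilde{\mathcal{D}}_{0,j}$ of $\varphi^{-1}(\mathcal{D}_0)$ containing $x_j$ is a Jordan domain with $\overline{\tilde{\mathcal{D}}_{0,j}} \subset \mathbb{D}$, and such that $\varphi : \tilde{\mathcal{D}}_{0,j} \to \mathcal{D}_0$ is a proper $d$-fold map with unique branch point $x_j$. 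In particular, $\partial \tilde{\mathcal{D}}_{0,j}$ is a single Jordan curve mapped $d$-to-one onto $\partial \mathcal{D}_0$.

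Next, consider the open annulus $\mathcal{A} := \mathcal{D} \setminus \overline{\mathcal{D}_0}$. Since $a_j$ is the unique ramified point of $\mathcal{D}$ and $a_j \notin \mathcal{A}$, the set $\mathcal{A}$ lies in $\mathbb{C} \setminus R$, so $\varphi : \varphi^{-1}(\mathcal{A}) \to \mathcal{A}$ is an ordinary covering (by the decomposition of $\varphi$ recorded after Theorem \ref{gjcgdwoood}). Let $\tilde{\mathcal{A}}$ denote the connected component of $\varphi^{-1}(\mathcal{A})$ whose closure in $\mathbb{D}$ meets the Jordan curve $\partial \tilde{\mathcal{D}}_{0,j}$; this component is unique because $\varphi$ is a local homeomorphism along $\partial \tilde{\mathcal{D}}_{0,j}$. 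Since $\pi_1(\mathcal{A}) \cong \mathbb{Z}$, the covering $\tilde{\mathcal{A}} \to \mathcal{A}$ is classified by a single degree. I claim this degree equals $d$: taking a generator $\gamma$ of $\pi_1(\mathcal{A})$ realized as a small loop around $a_j$ close to $\partial \mathcal{D}_0$, the B\"ottcher normal form shows that the lift of $\gamma$ starting at a point of $\partial \tilde{\mathcal{D}}_{0,j}$ travels exactly $1/d$-th of the way around this boundary circle, so $\gamma^d$ is the smallest power lifting to a closed loop. Hence $\tilde{\mathcal{A}}$ is a $d$-fold unramified cover of $\mathcal{A}$ and is topologically an annulus.

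Assembling the pieces yields $\tilde{\mathcal{D}}_j = \tilde{\mathcal{D}}_{0,j} \cup \partial \tilde{\mathcal{D}}_{0,j} \cup \tilde{\mathcal{A}}$, a topological disk on which $\varphi$ is proper of degree $d$ with $x_j$ as its unique branch point; deleting $x_j$ and $a_j$ then gives the claimed $d$-fold unramified covering $\tilde{\mathcal{D}}_j \setminus \{x_j\} \to \mathcal{D} \setminus \{a_j\}$. For the relative compactness claim $\overline{\tilde{\mathcal{D}}_j} \subset \mathbb{D}$, I enlarge $\mathcal{D}$ to a regular disk $\mathcal{D}' \supset \overline{\mathcal{D}}$ satisfying the same hypotheses (which is possible because $R$ is locally finite and $\mathcal{D}$ is a regular disk), apply the same construction to obtain an open set $\tilde{\mathcal{D}}'_j \subset \mathbb{D}$ carrying a proper degree-$d$ map onto $\mathcal{D}'$, and then identify $\overline{\tilde{\mathcal{D}}_j}$ with the preimage of the compact set $\overline{\mathcal{D}}$ inside $\tilde{\mathcal{D}}'_j$, which is compact by properness.

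The step I expect to be most delicate is precisely this compactness claim: properness of $\varphi$ is only given locally in the definition of a branched covering, and running the construction on the nested pair $\mathcal{D} \subsetneq \mathcal{D}'$ requires care to avoid circularity. The monodromy identification of the degree as $d$ is the other place where one must be attentive, since it relies on using B\"ottcher's normal form to interpret the local behavior of lifts along $\partial \tilde{\mathcal{D}}_{0,j}$, together with the Misiurewicz hypothesis guaranteeing that $a_j$ is the only ramified point in $\mathcal{D}$.
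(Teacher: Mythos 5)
Your decomposition of $\mathcal{D}$ into a small B\"ottcher disk $\mathcal{D}_0$ around $a_j$ plus the annulus $\mathcal{A}=\mathcal{D}\setminus\overline{\mathcal{D}_0}$, with the monodromy computation over $\pi_1(\mathcal{A})\cong\mathbb{Z}$, is a legitimate and genuinely different route from the paper, which instead builds $\tilde{\mathcal{D}}_j$ as the set swept out by analytic continuations of germs of $(\varphi|_{\mathcal{E}})^{-1}$ along curves in $\mathcal{D}\setminus\{a_j\}$, observes $\varphi(\partial\mathcal{C})\subset\mathbb{C}\setminus\mathcal{D}$, and deduces properness from that. Up to minor sloppiness (lifts of your loop $\gamma\subset\mathcal{A}$ must start at a point of $\tilde{\mathcal{A}}$ just outside $\partial\tilde{\mathcal{D}}_{0,j}$, not on it; and the gluing/properness over $\mathcal{D}$ needs the remark that the $d$ preimages in $\tilde{\mathcal{A}}$ of points in a collar of $\partial\mathcal{D}_0$ are exactly the annulus inside the B\"ottcher patch, so no stray sheets of $\tilde{\mathcal{A}}$ sit over the inner collar), this part of your argument works.

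The genuine gap is the relative compactness step, which you yourself flagged. You enlarge $\mathcal{D}$ to a regular disk $\mathcal{D}'\supset\overline{\mathcal{D}}$ \emph{satisfying the same hypotheses}, justified only by local finiteness of $R$. But the hypothesis of the lemma controls ramified points in the \emph{open} disk $\mathcal{D}$ only: nothing prevents points of $R$ from lying on $\partial\mathcal{D}$, and if even one does, then every open $\mathcal{D}'\supset\overline{\mathcal{D}}$ contains at least two ramified points, so no admissible enlargement exists. This is not a corner case you may dismiss -- it is exactly the difficulty the paper's proof is designed to absorb: there the slightly enlarged disk $\mathcal{D}_\epsilon=g(D_\epsilon)$ is \emph{allowed} to contain finitely many extra ramified points, and the enlarged region $\mathcal{C}_1\supset\overline{\mathcal{C}}$ is built by continuation with the corresponding branch points adjoined, which is what yields $\overline{\mathcal{C}}\subset\mathcal{C}_1\subset\mathbb{D}$. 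If you try to imitate that within your framework, the annulus $\mathcal{D}'\setminus\overline{\mathcal{D}_0}$ may meet $R$, the covering argument no longer applies, and a component of the preimage of a multiply punctured region need not have finite degree, so the ``same construction'' cannot simply be rerun; this is where the circularity you worried about becomes real. A fix that stays within your approach is to prove directly that $\tilde{\mathcal{A}}$ cannot accumulate on $\partial\mathbb{D}$: if $w_n\in\tilde{\mathcal{A}}$ tended to $\partial\mathbb{D}$ with $\varphi(w_n)\to z^*\in\partial\mathcal{D}$, take a small branched-covering neighborhood $U$ of $z^*$; the $w_n$ must eventually occupy infinitely many distinct components of $\varphi^{-1}(U)$, and since each such component maps properly onto $U$ and $U\cap\mathcal{A}$ is a simply connected set avoiding $R$, each contributes a distinct point of the fiber of $\varphi|_{\tilde{\mathcal{A}}}$ over a fixed $y\in U\cap\mathcal{A}$, contradicting that $\tilde{\mathcal{A}}\to\mathcal{A}$ has the finite degree $d$ you computed (accumulation over $\partial\mathcal{D}_0$ is excluded by the B\"ottcher patch). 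As written, however, the enlargement argument does not establish $\overline{\tilde{\mathcal{D}}_j}\subset\mathbb{D}$.
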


\begin{proof}  There exists a conformal disk $V$ containing $a_j$  such that every connected component of $\varphi^{-1}(V)$
projects onto $V$ by means of a proper map. The group of deck transformations acts transitively on such components. Since the set \textcolor{black}{of} branch points is locally finite, by reducing $V$ if necessary we may assume that each component contains only one branch point, which must be projected to $a_j$ by $\varphi.$ Let $\mathcal{E}$ denote the unique component of $\varphi^{-1}(V)$ containing $x_j.$ Then $\varphi: \mathcal{E}{\setminus} \{x_j\} \to V{\setminus} \{a_j\}$ is a  $d$-fold covering map, where $d=\nu_a(a_j).$

The set $\tilde{\mathcal{D}}_j$ in the statement is constructed as follows. Fix any point $z_0$ in $V{\setminus} \{a_j\}$. Any $z$ in $\mathcal{D} {\setminus} \{a_j\}$ can be joined to $z_0$ by a continuous curve $\gamma$ in $\mathcal{D} {\setminus} \{a_j\}$ starting at $z_0.$ The analytic continuation of a germ of $(\varphi|_{\mathcal{E}})^{-1}$ at $z_0$ can be continued along $\gamma$ and leads to a germ at $z$ which is represented by $(g_1, z)$, where $g_1$ is a branch of $\varphi^{-1}$ locally defined at $z.$ For every $z$ in $\mathcal{D}{\setminus} \{a_j\},$ we let $\mathbf{h}(z)$ denote the set of all $g_1(z)$ such that $(g_1, z)$ is obtained by analytic continuation of a germ of $(\varphi|_{\mathcal{E}})^{-1}$ at $z_0$ along a curve $\gamma$ in $\mathcal{D}{\setminus} \{a_j\}$ joining $z_0$ to $z$, as described previously. We set $\mathbf{h}(a_j)=x_j.$ By the Monodromy Theorem, $\mathbf{h}(z)$ does not depend on  the initial choice of $z_0$ and $\mathbf{h}(V)=\mathcal{E}.$ The set  $\mathcal{C}=\mathbf{h}(\mathcal{D})$ -- which is constructed using analytic continuations along curves -- is naturally path connected and satisfies
\begin{equation}\label{lkjsldoioiesfsdf} \varphi(\partial \mathcal{C}) \subset \mathbb{C}{\setminus}\mathcal{D}.
\end{equation}
In order to show that $\overline{\mathcal{C}} \subset \mathbb{D}$, assume that $\mathcal{D}=g(D)$, where $g$ is a univalent map defined on a region containing the closure of a round disk $D.$ For a small $\epsilon>0$, the $\epsilon$-neighborhood of $D$ is contained in the domain of $g.$ Let $\mathcal{D}_{\epsilon} = g(D_{\epsilon}).$ 
Using the same process in the construction of $\mathcal{C},$  we construct another region $\mathcal{C}_1$ containing the closure of $\mathcal{C}$ by considering all possible analytic continuations of germs of $(\varphi|_{\mathcal{E}})^{-1}$  along curves in $\mathcal{D}_{\epsilon}$. It should be noticed, however, that now $\mathcal{D}_{\epsilon}$ might contain a finite set of ramified points, and as a consequence, some analytic continuations do not lead to a germ, but rather to a branch point of $\varphi$ which we add to $\mathcal{C}_1$ in the process of its construction. It follows that $\overline{\mathcal{C}}\subset \mathcal{C}_1 \subset \mathbb{D}.$ 
\textcolor{black}{If $K$ is a compact subset of $\mathcal{D}$},  then by \eqref{lkjsldoioiesfsdf} the intersection $\varphi^{-1}(K) \cap \partial \mathcal{C}$ is empty. Hence 
\[(\varphi|_{\mathcal{C}})^{-1}(K) = \varphi^{-1}(K) \cap \mathcal{C} = \varphi^{-1}(K) \cap \overline{\mathcal{C}} \]

\noindent is compact. Since $K$ is arbitrary, $\varphi: \mathcal{C} \to \mathcal{D}$ is proper, with the same degree $d$ of $\varphi|_{\mathcal{E}}.$ Finally, we will prove that $\mathcal{C}=\tilde{\mathcal{D}}_j.$ Since $\mathcal{C}$ is connected and $x_j\in \mathcal{C}$, it follows that $\mathcal{C} \subset \tilde{\mathcal{D}}_j.$ For the other inclusion, we consider a curve $\zeta$ in $\tilde{\mathcal{D}}_j$ joining some $w_0 \in \mathcal{E}$ to an arbitrary $w$ in $\tilde{\mathcal{D}}_j.$ Without loss of generality,
we may assume that $\gamma =\varphi\circ \zeta$ contains no ramified points;  by Theorem \ref{jdjfsfdwef}, $\zeta$ is given by an analytic continuation  of a germ of $(\varphi|_{\mathcal{E}})^{-1}$ along $\gamma$, thereby showing that $w$ belongs to $\mathcal{C}.$

The proof follows with $\tilde{\mathcal{D}}_j = \mathcal{C}.$
\end{proof}

\begin{lem}\label{ojpoisdfweadafd}  Let $s_j$ be the maximal positive real number such that the open ball $B(a_j, s_j)$ does not intersect $R,$ except for $a_j.$ Then there exists $\eta_j$ in $(0, 1)$ such that any  branch $g$ of $\mathbf{f}_a^{-1}$ defined on a small connected neighborhood of some $\zeta_0\in B(a_j, s_j/2){\setminus \{a_j\}}$ can be extended to a univalent branch of $\mathbf{f}_a^{-1}$ defined on $B(\zeta_0, r_0)$, where $r_0$ denotes the Euclidean distance between $\zeta_0$ and $R.$ Moreover, $g$ strictly contracts the orbifold metric $\rho$ by the uniform factor $\eta_j:$
\[\|g'(z)\|_{\rho} < \eta_j, \]
\noindent for every $z$ in $B(\zeta_0, r_0).$

\end{lem}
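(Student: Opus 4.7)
The plan is to lift the branch $g$ to the unit disk via the regular branched covering $\varphi$, apply the strict contraction of Theorem \ref{ljksoijdfkwdfsed}, and transfer the resulting estimate back to the orbifold metric using the local isometry property of $\varphi$. Lemma \ref{lkhalosiddwesdfc} will be used to place the lifts inside a compactly contained subregion of $\mathbb{D}$, and the uniformity of $\eta_j$ will come from the fact that, modulo deck transformations, only finitely many lifted branches need to be considered.

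First I would set up the geometry. Since $|\zeta_0 - a_j| < s_j/2$ and $r_0 \leq |\zeta_0 - a_j|$, the triangle inequality gives $\overline{B(\zeta_0, r_0)} \subset B(a_j, s_j) =: \mathcal{D}$, a regular disk whose only ramified point is $a_j$; by maximality of $r_0$, the ball $B(\zeta_0, r_0)$ is simply connected, disjoint from $R$, and avoids $a$. Lemma \ref{lkhalosiddwesdfc} then supplies a component $\tilde{\mathcal{D}}_j$ of $\varphi^{-1}(\mathcal{D})$ with $\overline{\tilde{\mathcal{D}}_j} \subset \mathbb{D}$, on which $\varphi$ is proper of degree $\nu_a(a_j)$ branched only at $x_j$, restricting to a covering over $\mathcal{D} \setminus \{a_j\}$. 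Trivializing this covering over the simply connected set $B(\zeta_0, r_0)$ produces a univalent local inverse $\psi : B(\zeta_0, r_0) \to \tilde{\mathcal{D}}_j$ with $\psi(\zeta_0) = x_0 \in \varphi^{-1}(\zeta_0)$. The germ $(g,\zeta_0)$ lifts to a germ of $\mathbf{g} = \varphi^{-1} \circ \mathbf{f}_a^{-1} \circ \varphi$ at $x_0$, which Theorem \ref{ljksoijdfkwdfsed} extends to a global branch $F : \mathbb{D} \to \mathbb{D}$ strictly contracting the Poincaré metric and fitting into diagram \eqref{sdfsdfskkdfdwlkjlksded}. The desired extension is $\tilde{g} := \varphi \circ F \circ \psi$ on $B(\zeta_0, r_0)$; it agrees with $g$ near $\zeta_0$ and is a branch of $\mathbf{f}_a^{-1}$ by \eqref{sdfsdfskkdfdwlkjlksded}, with univalence following from simple connectivity of $B(\zeta_0, r_0)$, the avoidance of the algebraic singularity $a$, and the preservation of local injectivity under analytic continuation.

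For the contraction estimate, I would use the local isometry $\varphi : (\mathbb{D} \setminus B, \rho_1) \to (\mathbb{C} \setminus R, \rho)$ from Definition \ref{lkjkjhgaiusdedd}. A direct chain-rule computation then yields
\[
\|\tilde{g}'(z)\|_{\rho} \;=\; \|F'(\psi(z))\|_{\rho_1}, \qquad z \in B(\zeta_0, r_0),
\]
where the right-hand side is well defined because $R$ is $\mathbf{f}_a$-forward invariant, so $\tilde{g}(z) \in R$ would force $z \in \mathbf{f}_a(R) \subset R$, contradicting $z \in B(\zeta_0, r_0)$. Since $\overline{B(\zeta_0, r_0)}$ is compactly contained in $\mathcal{D}$, its image $\psi(\overline{B(\zeta_0, r_0)})$ is compactly contained in $\overline{\tilde{\mathcal{D}}_j}$, which itself is compactly contained in $\mathbb{D}$ by Lemma \ref{lkhalosiddwesdfc}. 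Because $F$ has critical points (as noted in the proof of Theorem \ref{ljksoijdfkwdfsed}), it is not a conformal automorphism, so Schwarz-Pick gives $\|F'\|_{\rho_1} < 1$ pointwise on $\mathbb{D}$, and continuity delivers a maximum strictly less than $1$ over the compact set $\overline{\tilde{\mathcal{D}}_j}$.

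To obtain a constant $\eta_j$ independent of $\zeta_0$ and of the particular branch $g$, I would observe that there are at most $p$ local branches of $\mathbf{f}_a^{-1}$ at any point (from the algebraic equation $(w-a)^q = z^p$), and lifts $F$ differing by composition with a deck transformation of $\varphi$ yield the same values of $\|F'\|_{\rho_1}$ at corresponding points, since deck transformations are isometries of $\rho_1$. Consequently, only finitely many essentially distinct functions $\|F'\|_{\rho_1}$ arise as $\zeta_0$ and $g$ vary, and taking $\eta_j$ to be the maximum of their suprema over $\overline{\tilde{\mathcal{D}}_j}$ completes the proof. I expect the main obstacle to be precisely this last bookkeeping step: verifying that the dependence of $F$ on $\zeta_0$ and on the chosen branch can be absorbed into finitely many deck orbits, so that a single $\eta_j \in (0,1)$ serves uniformly. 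The remaining ingredients (the extension via $F$, the chain-rule identity, and the Schwarz-Pick bound) follow directly from Theorem \ref{ljksoijdfkwdfsed}, Lemma \ref{lkhalosiddwesdfc}, and the definition of the orbifold metric.
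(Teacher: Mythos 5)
Your route is essentially the paper's: lift the branch through $\varphi$ into the distinguished component $\tilde{\mathcal{D}}_j$ furnished by Lemma \ref{lkhalosiddwesdfc}, extend the lift to a global branch $F:\mathbb{D}\to\mathbb{D}$ via Theorem \ref{ljksoijdfkwdfsed}, transfer the Schwarz--Pick contraction back through the local isometry $\varphi$, and get uniformity from the relative compactness of $\tilde{\mathcal{D}}_j$ in $\mathbb{D}$ plus the finiteness of the lifts modulo deck transformations. The paper does the same bookkeeping by normalizing the lifts so that $G(\tilde{\mathcal{D}}_1)\subset\tilde{\mathcal{D}}_0$ and counting $pq$ of them; your deck-orbit phrasing is the same idea, and the worry you raise about precomposition moving the evaluation set is harmless precisely because $\psi$ is always chosen into the fixed component $\tilde{\mathcal{D}}_j$, whose stabilizer in the deck group preserves it, while postcomposition with a deck transformation leaves $\|F'\|_{\rho_1}$ unchanged pointwise.

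The one step whose justification would fail as written is the univalence of the extension on $B(\zeta_0,r_0)$: simple connectivity of the domain together with nonvanishing of the derivative (``preservation of local injectivity'') does \emph{not} imply injectivity --- a branch of $w\mapsto w^{2}$ on a simply connected region avoiding $0$ whose argument variation exceeds $\pi$ is locally injective but not injective. What actually saves the claim, and is how the paper argues (the polar representation of $\mathbf{f}_a^{-1}$ and $p/q>1$, invoked in the proof of Theorem \ref{kljhlsdiedf}), is the specific geometry: every branch of $\mathbf{f}_a^{-1}$ has the form $w\mapsto e^{2\pi i k/p}\exp\bigl(\tfrac{q}{p}\log(w-a)\bigr)$, and since $a\in R$ forces $r_0\le |\zeta_0-a|$, the ball $B(\zeta_0,r_0)$ lies in an open half-plane bounded by a line through $a$, so $\arg(w-a)$ varies by less than $\pi$ on it. If two points had the same image, their values of $\log(w-a)$ would differ by a nonzero integer multiple of $2\pi i\,p/q$, whose imaginary part exceeds $2\pi$; this contradicts the bound on the argument variation, so the branch is univalent. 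With this short repair (and a slightly more careful statement of why only finitely many functions $\|F'\|_{\rho_1}$ arise, e.g.\ because $\varphi\circ F$ must be one of at most $p$ branches of $\mathbf{f}_a^{-1}\circ\varphi$ on $\mathbb{D}$, determined by their value at a single point), your argument coincides with the paper's proof.
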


\begin{proof} Let $\mathcal{D}_j$ denote the maximal disk $B(a_j, s_j).$ Fix an arbitrary $x_j$ in the pre-image of each $a_j$ under $\varphi.$ Using the same terminology of Lemma \ref{lkhalosiddwesdfc}, let $\tilde{\mathcal{D}}_j$ be the unique connected component of  $\varphi^{-1}(\mathcal{D}_j)$  which contains $x_j.$  Note that the closure of each $\tilde{\mathcal{D}}_j$ is a compact subset of $\mathbb{D}.$
We may assume $a_0=0$ is the first ramified point and $a_1=a$ is the second. Since $a$ is the singular point of $\mathbf{f}_a^{-1}$, it maps  $\mathcal{D}_1$ onto the  ball $B(0, s_1^{q/p})$, which is contained in $\mathcal{D}_0$, since  $\mathbb{C}{\setminus}R$ is backward invariant under $\mathbf{f}_a$ and $s_0$ is maximal.

Given a univalent branch $g$ of $\mathbf{f}_a^{-1}$ defined on a conformal disk $V_1\subset \mathcal{D}_1{\setminus} \{a\},$ we may use the Lifting Theorem of covering spaces, Lemma \ref{lkhalosiddwesdfc} and the fact that $\mathbf{f}_a^{-1}$ is a separable holomorphic multifunction on $\mathbb{C}{\setminus}\{a\}$ to construct
 a commutative diagram of bi-holomorphic maps between  conformal disks $V_0 \subset \mathcal{D}_0{\setminus}\{0\}$, $\tilde{V}_0 \subset \tilde{\mathcal{D}}_0{\setminus}\{x_0\}$ and $\tilde{V}_1 \subset \tilde{\mathcal{D}}_1{\setminus} \{a \}:$

\begin{equation}\label{sdfsdfskkdfdfasded}
    \begin{tikzcd}
      \tilde{V}_1   \arrow{r}{G}\arrow{d}[swap]{\varphi} & \tilde{V}_0 \arrow{d}{\varphi} \\
        V_1 \arrow{r}[swap]{g} & V_0
    \end{tikzcd}
\end{equation}

By Theorem \ref{ljksoijdfkwdfsed} and Lemma \ref{lkhalosiddwesdfc},    $G$ can be extended to a global branch $G: \mathbb{D} \to \mathbb{D}$, with $G(\tilde{\mathcal{D}}_1) \subset \tilde{\mathcal{D}}_0$, which is a strict contraction of the hyperbolic metric $\mu$ on $\mathbb{D}.$  
Since $\nu(0)=q$ and $\# \mathbf{f}_a^{-1}(z) =p$ if $z\neq a$, it is possible to use Lemma \ref{lkhalosiddwesdfc} and Diagram \eqref{sdfsdfskkdfdfasded} to show that the space $\mathcal{G}_1$ consisting of all $G$ obtained by the previous method, with $G(\tilde{\mathcal{D}}_1) \subset \tilde{\mathcal{D}}_0$, has cardinality $\#  \mathcal{G}_1 = pq.$ Since each member  of $\mathcal{G}_1$ strictly contracts $\mu$ on the relatively compact set $\tilde{\mathcal{D}}_{1}$, there exists $\eta_1 \in (0, 1)$ such that 

\begin{equation}
\| G'(w)\|_{\mu} < \eta_1
\end{equation}
for every $G$ in $\mathcal{G}_1$ and every $w\in {\tilde{\mathcal{D}}}_1.$
In Diagram \eqref{sdfsdfskkdfdfasded}, $\varphi$ is an isometry on each disk (using the hyperbolic metric $\mu$ on the unit disk and the orbifold metric $\rho$ on $\mathbb{C}{\setminus} R$).  We conclude that $\|g'(z)\|_{\rho} < \eta_1,$ whenever $z\in V_1$, for every univalent branch $g$ defined on $V_1.$ We may take $V_1$ as the open ball $B(\zeta_0, r_0)$ described in the statement. The Lemma follows in the case $a_j=a.$  The general case involves a similar reasoning  (using another sequence of univalent maps just like  Diagram \eqref{sdfsdfskkdfdfasded}, with the help of Lemma \ref{lkhalosiddwesdfc}).

\end{proof}

\begin{proof}[Proof of Theorem \ref{kljhlsdiedf}] Follows from 
Theorem \ref{ljksoijdfkwdfsed} and  Lemma \ref{ojpoisdfweadafd}, using a standard compactness argument, with a finite covering by relatively compact open sets on which all (finitely many) branches of $\mathbf{f}_a^{-1}$ contract the orbifold metric by a uniform factor. Lemma \ref{ojpoisdfweadafd} is applied to find this uniform factor on a neighborhood of every ramified point.  Using the polar representation of $\mathbf{f}_a^{-1}$ and the fact that $p/q>1$ it is possible to show that $g$ extends to a univalent branch on $B(\zeta_0, r_0)$ by computing $g$ explicitly using polar coordinates. 
\end{proof}

\section{Subhyperbolicity} \label{asdfapoiuasdfasdrweg}

If $a$ is a Misiurewicz point, then by definition the critical point has a unique bounded forward orbit, the \emph{preperiodic critical orbit of $\mathbf{f}_a$.} This critical orbit is precisely the set of ramified points which are in $K_a$. (Recall that every ramified point is a singularity of the orbifold metric). Since both $K_a$ and $\mathbb{C}{\setminus}R$ are backward invariant, it follows that $K_{a}{\setminus}R$ is also backward invariant under $\mathbf{f}_a.$ The following result guarantees the uniform expansion of the orbifold metric within a neighborhood of $K_a.$

\begin{thm}[\bf Subhyperbolicity] \label{poiaudfposiuwedfs} Suppose that $a$ is a Misiurewicz point for the family \eqref{lkjdllkjhalsoed}. Then $\mathbf{f}_a$ expands the orbifold metric $\rho$ on a neighborhood of $K_a$ by a uniform factor. More precisely, there exist an open set $V$ containing $K_a$ and a constant $\eta\in (0, 1)$ such that $R\cap V$ is the preperiodic critical orbit of $\mathbf{f}_a$, and for every univalent branch $g$ of $\mathbf{f}_a^{-1}$ defined on a region $W\subset V,$ we have $\| g'(w)\|_{\rho} <\eta$, for every $w\in W{\setminus}R.$
\end{thm}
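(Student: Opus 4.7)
The plan is to reduce the statement to a direct application of Theorem~\ref{kljhlsdiedf} by constructing a compact neighborhood $K$ of $K_a$ whose intersection with $R$ consists only of the pre-periodic critical orbit, and then transferring the resulting uniform contraction bound to every univalent branch of $\mathbf{f}_a^{-1}$ on $V = \mathrm{int}(K)$. First, I would verify that $R \cap K_a$ coincides with the pre-periodic critical orbit $O$, which is finite since $a$ is Misiurewicz. The inclusion $O \subset R \cap K_a$ is immediate. Conversely, if $z \in R \cap K_a$, then $z \in \mathbf{f}_a^k(0)$ via some forward path $0 \to v_1 \to \cdots \to v_k = z$, and $z$ admits at least one bounded forward orbit $z \to u_1 \to u_2 \to \cdots$. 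Concatenating these produces a bounded forward orbit of $0$, which by condition (i) of Definition~\ref{dlkhaskjhdfjhkwefdc} must equal $O$. Hence $z \in O$.

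Next, I would thicken $K_a$ into an open neighborhood $V$ satisfying $R \cap V = O$. Fix any bounded open set $V_0$ with compact closure containing $K_a$. By Lemma~\ref{alkjacbcbkhlpsdf} the set $R \cap \overline{V_0}$ is finite; write it as $O \sqcup S$, where $S$ is disjoint from $K_a$ by the preceding step. Removing from $V_0$ sufficiently small closed disks centered at each point of $S$ yields an open neighborhood $V$ of $K_a$ whose closure $K = \overline{V}$ is compact and satisfies $R \cap K = O$. Applying Theorem~\ref{kljhlsdiedf} to $K$ produces constants $\eta \in (0,1)$ and $r > 0$ such that any local branch of $\mathbf{f}_a^{-1}$ at a point $\zeta_0 \in K \setminus R$ extends to a univalent branch on $B(\zeta_0, r_0)$ whose $\rho$-derivative is strictly bounded by $\eta$. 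For any univalent branch $g$ of $\mathbf{f}_a^{-1}$ on a region $W \subset V$ and any $w \in W \setminus R$, the germ of $g$ at $w$ is such a local branch; by the Identity Theorem, its extension coincides with $g$ on the connected component of $W \cap B(w, r_0)$ containing $w$, and in particular $\|g'(w)\|_\rho < \eta$.

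I expect the main obstacle to be the first step, namely the identification $R \cap K_a = O$, where one must invoke the global Misiurewicz condition to rule out \emph{stray} post-critical points in $K_a$ whose own forward orbits might remain bounded even though the path from $0$ landing on them is unbounded. Once this is settled, all subsequent steps are routine: the local finiteness of $R$ lets us separate $K_a$ from $R \setminus O$, and the remaining argument is a clean application of Theorem~\ref{kljhlsdiedf} combined with analytic continuation through the Identity Theorem.
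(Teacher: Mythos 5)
Your proposal is correct and follows essentially the same route as the paper: the paper's proof likewise chooses a small $\epsilon$-neighborhood $V=(K_a)_\epsilon$ with $R\cap V=R\cap K_a$ (the bounded critical orbit, asserted in the paragraph preceding the theorem) and then applies Theorem~\ref{kljhlsdiedf} to the compact set $\overline{V}$ to obtain the uniform factor $\eta$. Your additional details --- the concatenation argument identifying $R\cap K_a$ with the pre-periodic critical orbit and the removal of disks around the finitely many stray ramified points --- are correct fillings-in of steps the paper treats briefly, not a different method.
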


\begin{proof}[Proof of Theorem \ref{poiaudfposiuwedfs}]  

Since the set $R$ is locally finite and $K_a$ is compact, there exists an $\epsilon$-neighborhood  $V=(K_a)_{\epsilon}$ such that  $R\cap V =R\cap K_a.$ 
 
 \noindent By Theorem \ref{kljhlsdiedf}, there exists a uniform contracting factor $\eta \in (0, 1)$ associated to the compact set  $\overline{V}$; if $g$ is a univalent branch of $\mathbf{f}_a^{-1}$ defined on a region $W \subset V,$  then $\|g'(\zeta_0)\|_{\rho} < \eta$, for every  $\zeta_0$ in $W{\setminus} R.$
\end{proof}

\gap \paragraph{Change of variables.} We may regard $\mathbb{C}$ as a Riemann surface on which every univalent  map  $\phi$ defined on a region $\Omega \subset \mathbb{C}$ is a coordinate chart. It is usual to call $z=\phi(\zeta)$ a local uniformizing parameter in $\phi(\Omega).$ The push-forward of a conformal metric $\rho$ defined on $\Omega$ is a metric on $\phi(\Omega)$ that turns $\phi$ into an isometry onto its image; it is often referred to as the \emph{expression of  $\rho$ with respect to the local uniformizing parameter $z$.}

\begin{lem}\label{fsdfaqefasdfasd}For every ramified point $a_j$ of the canonical orbifold associated to a Misiurewicz point there exists a coordinate chart $z=\phi(\zeta)$ defined for $\zeta$ in a neighborhood of $a_j$, with $\phi(a_j) =0$, such that the expression of the orbifold metric with respect to the local uniformizing parameter $z$ around zero becomes 
\begin{equation}\label{lksdfwesdfs} ds = \frac{\rho_j(\sqrt[d]{z})}{|z|^{1-1/d}} |dz|\end{equation}
\noindent where $d=\nu(a_j)$ and  $\rho_j$ is a $C^\infty$ and strictly positive function defined on a round disk centered at $0$  which satisfies  $\rho_j(e^{2\pi i/d}z) =\rho_j(z),$ for any $z$ in the domain of $\rho_j.$ 
\end{lem}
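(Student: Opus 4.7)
\medskip

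\noindent\textbf{Proof proposal for Lemma~\ref{fsdfaqefasdfasd}.}
The plan is to pull the orbifold metric back through the regular branched covering $\varphi:\mathbb{D}\to(\mathbb{C},\nu_a)$ near a preimage of $a_j$ and then push it down using a suitable local chart. Fix a branch point $x_j\in\varphi^{-1}(a_j)$. Since $\varphi$ is a regular branched covering with local degree $d=\nu_a(a_j)$ at $x_j$, a standard application of the local normal form for a holomorphic map with a zero of order $d$ (equivalently, B\"ottcher's normalization as already invoked in the proof of Theorem~\ref{ljksoijdfkwdfsed}) supplies a holomorphic chart $w$ on a neighborhood of $x_j$ in $\mathbb{D}$ with $w(x_j)=0$ such that, with the affine chart $\phi(\zeta)=\zeta-a_j$ on the target, the covering reads
\[
z=\varphi(w)-a_j=w^{d}.
\]

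\medskip

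\noindent With this normal form in place, I would carry out the change of variables. Write the Poincar\'e metric of $\mathbb{D}$ near $x_j$ in the coordinate $w$ as $\sigma(w)|dw|$, where $\sigma$ is a strictly positive $C^{\infty}$ function on a round disk $|w|<\delta$. By Definition~\ref{lkjkjhgaiusdedd} the orbifold metric $\rho$ on $\mathbb{C}\setminus R$ is the unique conformal metric making $\varphi:(\mathbb{D}\setminus B,\sigma|dw|)\to(\mathbb{C}\setminus R,\rho\,|d\zeta|)$ a local isometry, so on the punctured neighborhood $0<|z|<\delta^{d}$ we have the identity $\rho(\zeta)|d\zeta|=\sigma(w)|dw|$ under $\zeta=a_j+w^{d}$. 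Differentiating $z=w^{d}$ gives $|dw|=|dz|/(d\,|z|^{1-1/d})$, so
\[
\rho(\zeta)|d\zeta|=\frac{\sigma(z^{1/d})}{d\,|z|^{1-1/d}}\,|dz|,
\]
and setting $\rho_j(w):=\sigma(w)/d$ produces precisely the form~\eqref{lksdfwesdfs}.

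\medskip

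\noindent It remains to justify the regularity and the rotational symmetry of $\rho_j$. Smoothness and strict positivity of $\rho_j$ on the disk $|w|<\delta$ are inherited directly from $\sigma$, which is the density of the hyperbolic metric on $\mathbb{D}$ expressed in the local chart $w$. The key symmetry $\rho_j(e^{2\pi i/d}w)=\rho_j(w)$ comes from the regularity of $\varphi$: the stabilizer in the deck group $\Gamma$ of the branch point $x_j$ has order equal to the local degree of $\varphi$ at $x_j$, namely $d$, and in the chosen coordinate $w$ it acts as the cyclic group generated by $w\mapsto e^{2\pi i/d}w$ (this is the content of the normal form $\varphi(w)=a_j+w^d$ combined with regularity). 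Since every element of $\Gamma$ is a conformal automorphism of $\mathbb{D}$, it is an isometry of $\sigma(w)|dw|$, and invariance of $\sigma$, hence of $\rho_j$, under the generating rotation follows. This invariance is exactly what makes the multivalued expression $\rho_j(\sqrt[d]{z})$ single-valued in a punctured neighborhood of~$0$.

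\medskip

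\noindent I do not expect a serious obstacle in this argument; the computation is routine once the B\"ottcher-type normal form and the identification of the local deck action are in place. The only subtle point is verifying that the generator of the local stabilizer of $x_j$ acts as a standard primitive $d$-th root of unity in the chosen chart, which is a direct consequence of the definitions of regular branched covering and ramification index already recorded in the paragraph preceding Theorem~\ref{gjcgdwoood}.
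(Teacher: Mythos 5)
Your proposal is correct and follows essentially the same route the paper takes, which simply delegates the computation to Milnor (p.~210): put $\varphi$ in the local normal form $w\mapsto a_j+w^{d}$ via B\"ottcher-type coordinates and push forward the hyperbolic density through $z=w^{d}$. The only difference is that you spell out the details the paper leaves to the reference, in particular the rotational invariance $\rho_j(e^{2\pi i/d}z)=\rho_j(z)$ via the order-$d$ stabilizer of $x_j$ in the deck group, which is a correct and welcome justification that the expression $\rho_j(\sqrt[d]{z})$ is single-valued.
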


\begin{proof}[Reference to the proof] The statement can be generalized for any regular branched covering $\varphi$ (not necessarily the one used in this paper). Let $x_j$ be a branch point.  Up to compositions with translations,  $\varphi$ is locally conjugate to $z^{\nu(a_j)}$, where $a_j=\varphi(x_j)$, as a consequence of B\"ottcher's Theorem.   \textcolor{black}{The expression \eqref{lksdfwesdfs} is found in \cite[p. 211]{Milnor},} using the branched covering $z^{\nu(a_j)}$ around zero,  computing directly the push-forward of the metric. \end{proof}

  Recall that in the case of a Misiurewicz point,  the preperiodic critical orbit of $\mathbf{f}_a$ contains a unique cycle $\alpha(a).$

 \begin{thm}\label{lhlkjhapopoxooxdfs} If $a$ is a Misiurewicz point, then  $\alpha(a)$ is a repelling cycle contained in the filled Julia set $K_a.$ 
 \end{thm}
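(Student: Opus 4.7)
The plan is to split the statement into two parts: the easy containment $\alpha(a)\subset K_a$, and the repelling property, which is the substance of the theorem. Containment is immediate because each cycle point $z_i\in\alpha(a)$ has the finite cycle itself as a bounded forward orbit, so $z_i\in K_a$ by definition of the filled Julia set.

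To handle the multiplier $\lambda$ of $\alpha(a)$, I would first dispose of two elementary observations: (i) $0\notin\alpha(a)$ because the critical orbit is strictly pre-periodic, and (ii) $a\notin\alpha(a)$ because $\mathbf{f}_a^{-1}(a)=\{0\}$, which would otherwise force $0\in\alpha(a)$. Consequently, each step $z_i\mapsto z_{i+1}$ of the cycle admits a univalent local branch of $\mathbf{f}_a$; composing them yields a univalent branch $f$ of $\mathbf{f}_a^n$ defined near $z_\ell$ with $f(z_\ell)=z_\ell$ and multiplier $\lambda=f'(z_\ell)$. Its inverse $g=f^{-1}$, a univalent branch of $\mathbf{f}_a^{-n}$ fixing $z_\ell$, then satisfies $\|g'(w)\|_\rho\leq \eta^n$ for every $w$ near $z_\ell$ with $w\notin R$, by Theorem~\ref{poiaudfposiuwedfs} applied $n$ times together with the chain rule.

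The crux is to pass to the limit $w\to z_\ell$, where $\rho$ is singular. In a local uniformizing chart sending $z_\ell$ to $0$, Lemma~\ref{fsdfaqefasdfasd} with $d=\nu_a(z_\ell)=p$ gives
\[
\rho(z)\,|dz| \;=\; \frac{\rho_p(\sqrt[p]{z})}{|z|^{\,1-1/p}}\,|dz|,
\]
with $\rho_p$ smooth, strictly positive, and $p$-fold symmetric at $0$; and $g(z)=\lambda^{-1}z+O(z^2)$. Direct substitution into $\|g'(z)\|_\rho=|g'(z)|\,\rho(g(z))/\rho(z)$ and letting $z\to 0$ should yield
\[
\lim_{z\to 0}\|g'(z)\|_\rho \;=\; |\lambda|^{-1}\cdot |\lambda|^{\,1-1/p} \;=\; |\lambda|^{-1/p},
\]
where the ratio $\rho_p(\sqrt[p]{g(z)})/\rho_p(\sqrt[p]{z})\to 1$ follows from the continuity, positivity, and $p$-fold symmetry of $\rho_p$. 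Combined with the sub-hyperbolic bound $\|g'\|_\rho\leq \eta^n$, this gives $|\lambda|^{-1/p}\leq \eta^n<1$, hence $|\lambda|\geq \eta^{-np}>1$ and $\alpha(a)$ is repelling.

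The main obstacle I anticipate is precisely this limit computation at the singular point of the orbifold metric: one has to confirm that the chart of Lemma~\ref{fsdfaqefasdfasd} genuinely desingularizes $\rho$, that the $p$-fold symmetry of $\rho_p$ renders $\rho_p(\sqrt[p]{\cdot})$ unambiguous near $0$, and that the ratio of smooth factors really tends to $1$ along the orbit of $g$. The preliminary verification that $\alpha(a)$ avoids both $0$ and $a$ is minor but necessary, since without it the univalent branch $f$ cannot be assembled from local inverse branches of $\mathbf{f}_a$.
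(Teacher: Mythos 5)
Your argument is correct and follows essentially the same route as the paper: assemble a univalent branch along the cycle, use the sub-hyperbolicity theorem plus the chain rule to get a uniform bound on the orbifold norm near the cycle point, and then pass to the limit at the singularity via the local expression of the orbifold metric from Lemma~\ref{fsdfaqefasdfasd}, obtaining $|\lambda|^{1/p}\geq\eta^{-n}>1$. The only cosmetic differences are that you phrase the estimate for the inverse branch (contraction) where the paper phrases it for the forward composition (expansion), and you make explicit the minor observations $0,a\notin\alpha(a)$ and $\nu_a(z_\ell)=p$.
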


\begin{proof} Since every cycle is a bounded orbit, it lies in the filled Julia set. The points of $\alpha(a)$ are in $R$ and none of them is the critical point. Denote the points of $\alpha(a)$ by 

\begin{equation}\label{fadfqwefassdwedffd} \zeta_0 \mapsto \zeta_1 \mapsto \zeta_2 \mapsto \cdots \mapsto \zeta_n=\zeta_0.
\end{equation}

\noindent There exist univalent branches $f_i$ of $\mathbf{f}_a$, each of which is locally defined at $\zeta_{i-1}$, satisfying  $f_i(\zeta_{i-1}) = \zeta_i$. Consider the composition 
\begin{equation} \label{sdfasdewefcx}f= f_n \circ f_{n-1} \circ \cdots \circ f_1, \end{equation}

\noindent  which has a fixed point at $\zeta_0$. Let $\phi$ be a local chart at $\zeta_0$ with $\phi(\zeta_0) = 0.$ Then $g=\phi \circ f \circ \phi^{-1}$ has a fixed point at $z=0$ and $g'(0)$ coincides with the multiplier of $\alpha(a)$ (recall that the multiplier is invariant under conformal conjugacies).  It suffices  to show that $|g'(0)| > 1.$ We know that $g'(0) \neq 0,$ for then the cycle $\alpha(a)$ does not contain zero. As  usual, let $\rho$ denote the orbifold metric on $\mathbb{C}{\setminus} R.$ By Theorem \ref{poiaudfposiuwedfs},  each $f_i$ expands $\rho$ by a uniform factor $\lambda >1$ on a punctured neighborhood of  $\zeta_i.$ It follows from the chain rule that $\|f'(\zeta)\|_{\rho} \geq \lambda^n$, for every $\zeta\neq \zeta_0$ in a neighborhood of $\zeta_0.$

Since $\zeta_0$ is a ramified point $a_j$, using Lemma \ref{fsdfaqefasdfasd} we determine a local expression of $\rho$ with respect to the local uniformizing parameter $z=\phi(\zeta)$ around zero, just like  \eqref{lksdfwesdfs}. Let  $d=\nu(a_j)$ and $\ell =1-1/d.$ By \eqref{lksdfwesdfs}, for every $z=\varphi(\zeta)$ in a punctured neighborhood of zero, 
\[\lambda^n \leq  \|f'(\zeta)\|_{\rho} = \|g'(z) \|_{\rho_j} = |g'(z)| \frac{\rho_j(\sqrt[d]{g(z)})}{\rho_j(\sqrt[d]{z})}\frac{|z|^{\ell}}{|g(z)|^{\ell}}.\]

\noindent Taking the limit as $z\to 0,$ the last product converges to $|g'(0)|/|g'(0)|^{\ell}$ because $\rho_j(0)>0.$ Hence $|g'(0)|^{1/d}\geq \lambda^n$, from which we conclude that $\alpha(a)$ is repelling. 
\end{proof}

 For the basic properties of $J_c$ and $K_c$, see Definition \ref{adfadweerdccgqwe}.

\begin{thm}\label{fsdfawedsfasdeed} Suppose that $a$ is a Misiurewicz point for the family \eqref{lkjdllkjhalsoed}. Then $J_a = K_a$ and 
 \begin{equation}\label{ljhsldfswdfsd}
K_a = \overline{\bigcup_{n\geq 0} \mathbf{f}_a^{-n
}(0)}.\end{equation}
\end{thm}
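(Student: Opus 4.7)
The plan is to prove $J_a = K_a$ directly, using sub-hyperbolicity and a homoclinic near-fixed-point construction, and then to deduce $K_a = \overline{\bigcup_{n \ge 0} \mathbf{f}_a^{-n}(0)}$ from the density of preimages of a non-exceptional point in the Julia set.

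The containments $J_a \subseteq K_a$ (already recorded in the excerpt) and $\overline{\bigcup_n \mathbf{f}_a^{-n}(0)} \subseteq K_a$ are immediate: every iterated preimage of $0$ admits a bounded forward orbit (reaching $0$, then following the Misiurewicz chain into $\alpha_a \subset K_a$), and $K_a$ is closed.

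The main step is $K_a \subseteq J_a$. Fix $z_0 \in K_a$ and a small disk $U_0 \subset V$ around $z_0$. If every bounded forward orbit of $z_0$ passes through $0$, then $z_0 \in \bigcup_n \mathbf{f}_a^{-n}(0) \subset J_a$ (since $0$ is a backward image of $\alpha_a \subset J_a$ and $J_a$ is backward invariant). Otherwise, pick a bounded orbit $(z_k)$ avoiding $0$ and let $G_n = (f_{n-1} \circ \cdots \circ f_0)^{-1}$ be the univalent inverse branch defined near $z_n$ with $G_n(z_n) = z_0$. By Theorem~\ref{kljhlsdiedf}, $G_n$ contracts the orbifold metric by $\eta^n$, so for any fixed small $\rho$-ball $B$ around an accumulation point $w \in K_a$ of a subsequence $z_{n_k} \to w$, one has $G_{n_k}(B) \subset U_0$ for $k$ large. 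It suffices to show $w \in J_a$: by backward invariance of $J_a$ this yields $U_0 \cap J_a \neq \emptyset$, and letting $U_0$ shrink forces $z_0 \in J_a$. If $w$ lies on the ramified critical orbit it is either in $\alpha_a \subset J_a$ or a backward image of $\alpha_a$. Otherwise I invoke the \emph{homoclinic} construction: select $n < m$ with $z_n, z_m$ both close to $w$, so that $h = (f_{m-1} \circ \cdots \circ f_n)^{-1}$ is a $\rho$-contraction by $\eta^{m-n}$ with $h(z_m) = z_n$; then $|h(w) - w|$ is arbitrarily small and the contraction-mapping principle produces a true fixed point of $h$ near $w$—a periodic point of $f_{m-1} \circ \cdots \circ f_n$, repelling by Theorem~\ref{poiaudfposiuwedfs}—so $w$ is a limit of repelling periodic points.

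Once $J_a = K_a$ is established, $K_a = \overline{\bigcup_n \mathbf{f}_a^{-n}(0)}$ follows from the classical density of preimages of non-exceptional points in the Julia set: the backward orbit of $0$ grows exponentially (at least $p^n$ points at level $n$), so $0$ is non-exceptional and its preimages are dense in $J_a = K_a$. The main obstacle is the homoclinic step—producing a genuine periodic fixed point from the near-fix of $h$—which demands careful transfer between the Euclidean and orbifold metrics, especially when $w$ lies near a ramified point where $\rho$ is singular. The sub-hyperbolic estimates of Theorem~\ref{poiaudfposiuwedfs} and the branched-covering formalism of Section~\ref{kjhkjhsdoiwe} supply the necessary analytic and geometric control.
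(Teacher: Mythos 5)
Your overall philosophy (inverse branches contract the orbifold metric, a near-return produces a repelling periodic point) matches the paper's, but two of your load-bearing steps are quotations of single-valued rational-map facts that are not available for the correspondence, and the third — the homoclinic step — is deferred exactly where the real difficulty lies. First, you invoke backward invariance of $J_a$ twice (to get $\bigcup_n\mathbf{f}_a^{-n}(0)\subset J_a$, and to conclude $U_0\cap J_a\neq\emptyset$ from $w\in J_a$). With $J_a$ \emph{defined} as the closure of repelling periodic points (Definition~\ref{adfadweerdccgqwe}), backward invariance is not known here: a preimage of a repelling periodic point is merely preperiodic, and the classical route (complete invariance of the non-normality locus plus density of repelling cycles in it) has no counterpart for a multivalued $\mathbf{f}_a$; proving it is essentially the density statement you are trying to establish. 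Second, "preimages of a non-exceptional point are dense in the Julia set" is a theorem about single-valued iteration proved via Montel's theorem; it cannot simply be cited for $\mathbf{f}_a$. The paper proceeds in the opposite order: it proves \eqref{ljhsldfswdfsd} first and directly (Lemma~\ref{fsdfsdwedf}), by constructing single-valued branches $f_j$ of $\mathbf{f}_a^j$ on $U$ by analytic continuation and running a Montel/orbifold-expansion argument on that family, and then uses this lemma as an \emph{input} to the proof that $K_a\subset J_a$, not as a corollary of it.

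Third, the homoclinic step. To extract a fixed point of $h=(f_{m-1}\circ\cdots\circ f_n)^{-1}$ near $w$, you must define $h$ (and your transfer branches $G_{n_k}$) on a ball of definite size around $w$, i.e.\ pull that ball back along the orbit segment. The intermediate points $z_j$ may pass arbitrarily close to the critical value $a$, which is the branch point of $\mathbf{f}_a^{-1}$ and lies in $K_a$, and near ramified points the orbifold metric behaves like $|z-a_j|^{1/\nu-1}|dz|$, so ramified points are at \emph{small} $\rho$-distance from nearby points: $\rho$-contraction (Theorem~\ref{kljhlsdiedf}) therefore does not keep the pulled-back sets away from $a$, nor does it supply the Euclidean radius $r_0=\min\{r,d(\zeta_0,R)\}$ needed to extend branches univalently. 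This is precisely the failure mode the paper flags when it says that straightforward extensions of the polynomial arguments break down, and its proof spends most of its length circumventing it: the special regime $V_{\alpha}$ near $K_a\cap R^{*}$, the K\"onigs linearization at the repelling cycle, the slit sectors $W_1{\setminus}\tilde{L}_a$ that make a branch $F$ of $\mathbf{f}_a^{-1}$ available across the critical value, and the closed chain \eqref{afsdfadfasewdsdfs} of maps between hyperbolic surfaces, whose Schwarz--Pick fixed point is a repelling periodic point passing through the given disk $U$ (which also removes any need for backward invariance of $J_a$). As written, your proposal acknowledges this obstacle but does not resolve it; repairing it would require substitutes for Lemma~\ref{fsdfsdwedf} and for the linearization/sector construction, which amounts to reproducing the paper's argument.
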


\begin{proof} Let $K=K_a.$
Recall that the points of the preperiodic orbit of zero coincide with $R\cap K,$ where $R$ is the set of ramified points. Let $R^*= R {\setminus} \{0\}.$ We know that every point $z$ of $K\cap R^*$ has only one image in $K$  (otherwise there would be another bounded forward orbit of zero), and the other $q-1$ images of $z$ are outside the compact set $K.$ By continuity, the correspondence maps a small conformal disk containing $z\in K \cap R^*$ to $q$ disjoint conformal disks, and only one of them intersects $K.$ This determines, at each such $z$, a \textcolor{black}{special} univalent branch $g_z$ of the correspondence that sends a conformal disk containing $z$ to the aforementioned conformal disk intersecting $K.$ The disjoint union of  such small disks determines a neighborhood $V_{\alpha}$ of $K\cap R^*$ such that, if the correspondence sends a point $\zeta \in V_{\alpha}$ to another point $\zeta'$ in $V_{\alpha}$, then $\zeta'=g_z(\zeta)$, for some $z\in K\cap R^*.$  This creates on $V_{\alpha}$ a \textcolor{black}{special} regime of iteration; and since $\alpha(a)$ is repelling, any infinite forward orbit contained in $V_{\alpha}$ is preperiodic and eventually coincides with $\alpha(a).$

 We will need the following lemma. 
\begin{lem}\label{fsdfsdwedf}
For any conformal disk $U$ intersecting $K_a$, there exists a finite sequence of nonconstant holomorphic maps $(f_j)_{j=0}^n$, defined on $U$ and {\color{black}starting with the identity map $f_0=\mathrm{Id}$, such that each $f_j$ is a branch of the composition $\mathbf{f}_a\circ f_{j-1}$,} $f_n(U)$ contains the origin, and $f_j'(z)\neq 0$ for all $z\in U$ and all $j\in\{0,\dots,n\}$. Since $U$ is arbitrary, \eqref{ljhsldfswdfsd} follows.

\end{lem}

 \begin{proof}[Proof of Lemma \ref{fsdfsdwedf}] Fix a bounded forward orbit $(z_i)_0^{\infty}$ of a point in $U\cap K_a.$ If $0\in U$ or some point of this orbit is a critical point, then there is nothing to prove. Otherwise, by the Monodromy Theorem and its Corollary \ref{kjhlkjhlskjed} there exists a unique branch $f_1$ of $\mathbf{f}_a$ defined on $U$ which sends $z_0$ to $z_1$. If $f_1(U)$ contains $0$, then there is nothing to prove. Otherwise, we may consider the holomorphic multifunction $\mathbf{f}_a \circ f_1$ defined on $U$, and using  analytic continuation as before, we find a unique branch $f_2$ of $\mathbf{f}_a \circ f_1$ defined on $U$ which sends $z_0$ to $z_2.$ Unless $f_2(U)$ contains the critical point,  $f_3$ is by definition the unique branch of $\mathbf{f}_a \circ f_2$ defined on $U$ which sends $z_0$ to $z_3.$ If we are fortunate,  this argument terminates at a branch $f_n$ of $\mathbf{f}_a^n$ defined over $U$ such that $f_n(U)$ contains zero, and there is nothing else to prove. Otherwise, we find a sequence of maps $f_n: U \to \mathbb{C}{\setminus} \{0, a\}$, which is normal, by Montel's Theorem. However, we will see that $f_n$ can never be a normal family. Suppose first that the set of subsequential  limits of $(z_i)_0^{\infty}$ is contained in $R\cap K.$  Then all but finitely many $z_n$ belong the the neighborhood $V_{\alpha}$ of $K\cap R^*$. Hence $(z_i)_0^{\infty}$ eventually coincides with the repelling cycle $\alpha,$ which implies $f_n'(z_0) \to \infty$, as a consequence of the chain rule. We conclude from the Weierstrass convergence theorem that  $f_n$ is not normal.
If the set of subsequential limits of the bounded orbit $(z_i)_0^\infty$ is not contained in $K\cap R,$  then a subsequence $z_{n_k}$ converges to some $w_0$ in $K{\setminus}R.$  Suppose for a moment that $f_n$ is a normal family. Since $f_{n_k}(z_0)$ converges to $w_0,$ no subsequence of $f_{n_k}$ escapes to infinity. By normality, we may  replace $f_{n_k}$ by one of its convergent subsequences, so that  $f_{n_{k}}$  converges locally uniformly to some holomorphic function $g$ defined on $U$, as well as $f'_{n_{k}}$ converges locally uniformly to $g'.$ Since $R$ is forward invariant and $K\cap R$ is the unique bounded forward orbit of zero,  it is possible to show that if  one point of the sequence $(z_i)_0^\infty$ enters $R$, then subsequent terms never leave $R$ and are eventually trapped in the repelling cycle $\alpha(a)$, a possibility that is ruled out by the fact $z_{n_k} \to w_0 \notin R.$ Hence we are allowed to evaluate the norm of $f_{n_k}'$ with respect to the orbifold metric $\rho$, concluding from 
Theorem \ref{poiaudfposiuwedfs} and the chain rule that $\|f_{n_k}'\|_{\rho}$ explodes to infinity as $k\to \infty$; nevertheless, we also have that

$$\|f_{n_k}'(z_0) \|_{\rho} = |f_{n_k}'(z_0) | \frac{\rho(f_{n_k} (z_0))}{\rho(z_0)} \to |g'(z_0)| \frac{\rho(w_0)}{\rho(z_0)}$$ 

\noindent which is evidently a contradiction. Hence $f_n$ is never normal, and some iterate $f_n(U)$ contains the critical point.  \end{proof}

  As described in \eqref{fadfqwefassdwedffd} and \eqref{sdfasdewefcx}, the composition of the univalent branches along the points $\zeta_j$ of the repelling cycle $\alpha$ yields a holomorphic  map $f$ defined on a neighborhood of $\zeta_0$ with a repelling fixed point at $\zeta_0=f(\zeta_0).$ The inverse $f^{-1}$ maps a conformal disk $\Omega$ around $\zeta_0$ into itself, with a geometrically attracting fixed point of multiplier $\lambda_0$ at $\zeta_0.$ We may choose $\Omega$ sufficiently small so that $\zeta_0$ is the unique ramified point contained in $\Omega$.
Let $\mathbb{D}_r$ denote the open disk of radius $r$ centered at zero.  Using  the K\"onigs linearization theorem, we may assume that $\Omega=\varphi(\mathbb{D}_r),$ where $\varphi$ is a conformal conjugacy between  $z\mapsto \lambda_0z$ and $f^{-1}$. 

Let $\tilde{z}_0$ be any point of $\Omega{\setminus} \{\zeta_0\}.$ Successive iterations of this point with respect to the maps $f_i^{-1}$ described in \eqref{sdfasdewefcx} determine an orbit $(\tilde{z}_i)_0^{\infty}$ of $\mathbf{f}_a^{-1}$ converging to the cycle $\alpha$ (in the backward direction), in the sense that  the $\omega$-limit set of this orbit is the union of all  $\{\zeta_i\}.$

{\color{black}
Let $U$ be an arbitrary conformal disk intersecting $K_a.$
By Lemma \ref{fsdfsdwedf}, a holomorphic branch $H$ of some iterate of the correspondence $\mathbf{f}_a$ sends $U$ to a connected neighborhood $V$ of zero, with $H'(z)\neq 0$ for all $z\in U.$   Thus $H(w_0)=0$ for some $w_0\in U$. Notice that $w_0 \not \in R$, since $a$ is Misiurewicz. Moreover, $W=\mathbf{f}_a(V)$ is a connected neighborhood of $a$.

We may  choose a simply connected region $U_1$ containing $w_0$ and $\tilde{z}_0$ such that $\overline{U_1}$ is contained in the complement of the set of ramified points $R.$}

There exists a conformal map $h$ defined on a neighborhood of $a$ which is a branch of some iterate $\mathbf{f}_a^{k_0}$ and sends $a$ to $\zeta_0.$ By choosing $s \in (0, r)$ sufficiently small, we may assume that $h$ maps a neighborhood $W_1$ of $a$ contained in $W$ biholomorphically onto $\Omega_1=\varphi(\mathbb{D}_{s}) \subset \Omega.$   Let $V_1$ denote $\mathbf{f}_a^{-1}(W_1)$, which is   a connected neighborhood of zero contained in $V.$ If we reduce the size of $s$, then both sets $V_1$ and $W_1$ will shrink accordingly; since $H$ is conformal and sends $w_0$ to zero, by reducing $s$ if necessary we may assume that $H$ sends a small connected neighborhood $U_2 \subset U_1$ of $w_0$ biholomorphically onto $V_1.$  It is important to notice that $U_2$ also depends on $s$, and if $s\to 0$, then $U_2$ will shrink to $\{w_0\}.$

Since $U_1 \subset \mathbb{C}{\setminus}R$, we are allowed to compute the length $\ell_{\rho}(\gamma)$ of curves in $U_1$ with respect to the orbifold metric $\rho$, as well as the diameter $|U_1|_{\rho}$ of $U_1$ with respect to $\rho.$  Choose some closed ball $\tilde{K}$ of large radius whose complement is a forward invariant set contained in the basin of infinity $\mathbb{C}{\setminus} K_a.$ Hence $\mathbf{f}_a^{-1}(\tilde{K})\subset \tilde{K}$ and we may assume that $K_a$ is contained in the interior of $\tilde{K}.$   Since the set of ramified points is locally finite, $\tilde{K}$ contains only finitely many points of $R;$  it is not hard to check that we can always take $U_1 \subset \tilde{K}$ satisfying the following property: there exists a positive constant $C_0$ such that any two points of $U_1$ are joined by a curve $\gamma$ within $U_1$ with $\ell_{\rho}({\gamma}) \leq C_0$ (indeed, it suffices to take $U_1$ as an $\epsilon$-neighborhood of some  curve joining $w_0$ and $\tilde{z}_0$ avoiding the finite set $\tilde{K}\cap R$).

Since $U_1$ is simply connected and the critical value $a$ is not in $U_1$, by analytic continuation there exists a unique branch $g_1$ of $\mathbf{f}_a^{-1}$ defined on $U_1$ which sends $\tilde{z}_0$ to $\tilde{z}_1$.  The set $\tilde{K}{\setminus} R$ is backward invariant under $\mathbf{f}_a$ and contains $U_1$; thus $g_1(U_1)$ is also contained in $\tilde{K}{\setminus} R.$ Inductively, we construct an infinite sequence of branches $g_n$ of $\mathbf{f}_{a}^{-n}$ such that $g_n$ is the unique branch of $\mathbf{f}_a^{-1}\circ g_{n-1}$ defined on $U_1$ which sends $\tilde{z}_0$ to $\tilde{z}_n.$

By the chain rule and Theorem \ref{kljhlsdiedf} we find $\eta \in (0, 1)$ such that $\|g_j'(z)\|_{\rho} < \eta^{j}$, whenever $z\in U_1$ and  $j > 0.$    If $g_j$ sends a pair of points $z, w$ in $U_1$ to another pair $x, y$ in $g_j(U_1)$ and $\gamma$ is a curve in $U_1$ joining $z$ to $w$ with $\ell_{\rho}(\gamma) \leq C_0$, then $$d_{\rho}(x, y) \leq \ell_{\rho}(g_j \circ \gamma) \leq \int_{0}^{1} \|g_j'(\gamma(t))\|_{\rho} \cdot \|\gamma'(t)\|_{\rho} dt \leq \eta^{j}C_0.   $$

\noindent Hence the diameter $|g_j(U_1)|_{\rho} \leq \eta^j C_0 $ tends to zero  as $j\to \infty.$ There exists a subsequence of $(\tilde{z}_j)_0^{\infty}$ converging to $\zeta_0,$  which is  the first point of the cycle $\alpha$. It is possible to show that some backward iterate of $U_1$ yields a small set $g_k(U_1)$ contained in $\Omega.$   Fix such $k$ and let $O_1 = g_k(U_1).$ 
By reducing $s$ if necessary, we may assume that $g_k$ maps $U_2$ biholomorphically onto a small subset $O_2$ of $O_1$ satisfying the following property: the closure of $O_2$ is contained in a conformal sector $\varphi(\mathbb{D}_r {\setminus} L)$ obtained by removing from $\mathbb{D}_r$ a slit connecting the origin to the boundary of $\mathbb{D}_r$ (recall that $\varphi$ conjugates $f^{-1}$ on $\Omega$ to $z\mapsto \lambda_0z$ on $\mathbb{D}_r$ and $\varphi(\mathbb{D}_r) = \Omega$).  
Some iterate $f^{-n_1}(O_2)$ is a small connected set $\Omega_2$ whose closure is also contained in a conformal sector $\varphi(\mathbb{D}_{r^{n_1}} {\setminus} L_1) \subset \Omega_1$, where $L_1$ is a slit obtained from a suitable rotation of $L.$  Let $\tilde{L}=\varphi(L_1).$ Then $\Omega_1{\setminus} \tilde{L} = \varphi(\mathbb{D}_s {\setminus} L_1)$ is also a conformal sector; in particular, it is a simply connected set excluding $\zeta_0.$ It follows that $h^{-1}$ sends $\Omega_1 {\setminus} \tilde{L}$ biholomorphically onto another conformal sector $W_1{\setminus} \tilde{L}_a,$ where $\tilde{L}_a = h^{-1}(\tilde{L})$ connects $a$ to the boundary of $W_1.$ Since $W_1{\setminus} \tilde{L}_a$ is simply connected and does not contain $a,$ it is the domain of a branch $F$ of $\mathbf{f}_a^{-1}$ whose image is contained in  $V_1.$ 

The whole idea of the proof is based on the following sequence of holomorphic maps between hyperbolic Riemann surfaces
\begin{equation}
\label{afsdfadfasewdsdfs}
\Omega_1{\setminus} \tilde{L}\xrightarrow{h^{-1}}  W_1 {\setminus} \tilde{L}_a   \xrightarrow{F}   V_1 \xrightarrow{(H|_{U_2})^{-1}} U_2 \xrightarrow{g_k} O_2 \xrightarrow{f^{-n_1}} \Omega_2  \Subset \Omega_1 {\setminus} \tilde{L}
   \end{equation}
which determine an attracting fixed point in $\Omega_2$, by the Schwarz-Pick Theorem. The fixed point is attracting for the composition of maps in \eqref{afsdfadfasewdsdfs}; therefore, it is a repelling periodic point of $\mathbf{f}_a$. We conclude that some repelling cycle intersects every set in \eqref{afsdfadfasewdsdfs}. In particular, $U$ contains a repelling periodic point of $\mathbf{f}_a$. Since $U$ is arbitrary, such points are dense in $K_a$, from which we conclude that $K_a=J_a.$

The proof of Theorem \ref{fsdfawedsfasdeed} is complete. \end{proof}

\gap \paragraph*{Acknowledgments}

\thanks{The author sincerely thanks Daniel Smania for his kind hospitality at ICMC/USP and Luna Lomonaco for valuable comments and insightful discussions. This work was partially supported by CNPq/MCTI/FNDCT under grant 406750/2021-1. The author is grateful to the anonymous referee for a careful reading of the manuscript and helpful suggestions that improved the presentation.}

\bibliography{oi}

@InProceedings{BLS,
author="Bullett, Shaun
and Lomonaco, Luna
and Siqueira, Carlos",
editor="Pacifico, Maria Jos{\'e}
and Guarino, Pablo",
title="Correspondences in Complex Dynamics",
booktitle="New Trends in One-Dimensional Dynamics",
year="2019",
publisher="Springer International Publishing",
address="Cham",
pages="51--75",
abstract="This paper surveys some recent results concerning the dynamics of two families of holomorphic correspondences, namely {\$}{\$}{\{}{\backslash}mathcal F{\}}{\_}a:z {\backslash}rightarrow w{\$}{\$} defined by the relation {\$}{\$}{\backslash}left( {\backslash}frac{\{}aw-1{\}}{\{}w-1{\}} {\backslash}right) ^2 + {\backslash}left( {\backslash}frac{\{}aw-1{\}}{\{}w-1{\}} {\backslash}right) {\backslash}left( {\backslash}frac{\{}az +1{\}}{\{}z+1{\}} {\backslash}right) + {\backslash}left( {\backslash}frac{\{}az+1{\}}{\{}z+1{\}} {\backslash}right) ^2 =3,{\$}{\$}and {\$}{\$}{\backslash}mathbf {\{}f{\}}{\_}c(z)=z^{\{}{\backslash}beta {\}} +c, {\backslash}text{\{} where {\}} 1<{\backslash}beta =p{\{}/{\}}q {\backslash}in {\backslash}mathbb {\{}Q{\}},{\$}{\$}which is the correspondence {\$}{\$}{\backslash}mathbf {\{}f{\}}{\_}c:z {\backslash}rightarrow w{\$}{\$} defined by the relation {\$}{\$}(w-c)^q=z^p.{\$}{\$}Both can be regarded as generalizations of the family of quadratic maps {\$}{\$}f{\_}c(z)=z^2+c{\$}{\$}. We describe dynamical properties for the family {\$}{\$}{\backslash}mathcal {\{}F{\}}{\_}a{\$}{\$} which parallel properties enjoyed by quadratic polynomials, in particular a B{\"o}ttcher map, periodic geodesics and Yoccoz inequality, and we give a detailed account of the very recent theory of holomorphic motions for hyperbolic multifunctions in the family {\$}{\$}{\backslash}mathbf{\{}f{\}}{\_}c{\$}{\$}.",
isbn="978-3-030-16833-9"
}

@Article{DH84,
author=" Douady, Adrien and Hubbard, John H. ",
title="{\'E}tude dynamique des polyn\^omes complexes. {P}artie {I, II}",
journal="Publications Math\'ematiques d'Orsay",
year="1984-85",
volume="84",
number="2",
pages="189"

}

@Article{L,
author="Lomonaco, Luna",
title="On parabolic-like maps",
journal="Ergodic theory and dynamical systems",
year="2015",
volume="35",
number="",
pages="2171-2197"

}

@article{BULLETT2024109956,
title = {Mating quadratic maps with the modular group {III}: The modular {M}andelbrot set},
journal = {Advances in Mathematics},
volume = {458},
pages = {109956},
year = {2024},
issn = {0001-8708},
doi = {https://doi.org/10.1016/j.aim.2024.109956},
url = {https://www.sciencedirect.com/science/article/pii/S0001870824004717},
author = {Shaun Bullett and Luna Lomonaco},
keywords = {Dynamics of holomorphic correspondences, Matings, Parabolic rational maps, Modular group},
abstract = {We prove that there exists a homeomorphism χ between the connectedness locus MΓ for the family Fa of (2:2) holomorphic correspondences introduced by Bullett and Penrose, and the parabolic Mandelbrot set M1. The homeomorphism χ is dynamical (Fa is a mating between PSL(2,Z) and Pχ(a)), it is conformal on the interior of MΓ, and it extends to a homeomorphism between suitably defined neighbourhoods in the respective one parameter moduli spaces. Following the recent proof by Petersen and Roesch that M1 is homeomorphic to the classical Mandelbrot set M, we deduce that MΓ is homeomorphic to M.}
}

@article{Bullett92,
author = {Bullett, Shaun}, 
title = {Critically Finite Correspondences and Subgroups of the Modular Group},
volume = {s3-65}, 
number = {2}, 
pages = {423-448}, 
year = {1992}, 
doi = {10.1112/plms/s3-65.2.423}, 
URL = {http://plms.oxfordjournals.org/content/s3-65/2/423.short}, 
eprint = {http://plms.oxfordjournals.org/content/s3-65/2/423.full.pdf+html}, 
journal = {Proceedings of the London Mathematical Society} 
}

@article{milnor1989self,
  title={Self-similarity and hairiness in the {M}andelbrot set},
  author={Milnor, John},
  journal={Computers in geometry and topology},
  volume={114},
  pages={211--257},
  year={1989},
  publisher={Lect. Notes Pure Appl. Math}
}

@Article{Bullett1994,
author="Bullett, Shaun
and Penrose, Christopher",
title="Mating quadratic maps with the modular group",
journal="Inventiones mathematicae",
year="1994",
volume="115",
number="1",
pages="483--511",
issn="1432-1297",
doi="10.1007/BF01231770",
url="http://dx.doi.org/10.1007/BF01231770"
}

@article{DH93,
  title={ A proof of {T}hurston's topological characterization of rational functions},
  author={A. Douady and J. H. Hubbard},
  journal={Acta Math.},
  volume={171},
  pages={263--297},
  year={1993},
  publisher={}
}

@article{ETDS22, title={Dynamics of hyperbolic correspondences}, volume={42}, DOI={10.1017/etds.2021.49}, number={8}, journal={Ergodic Theory and Dynamical Systems}, author={Siqueira, Carlos}, year={2022}, pages={2661--2692}}

@article{Proc22, title={{H}ausdorff dimension of {J}ulia sets of unicritical correspondences}, volume={151}, DOI={10.1090/proc/16125}, number={}, journal={Proceedings of the American Mathematical Society}, author={Siqueira, Carlos}, year={2023}, pages={633?645}}

@article{SS17,
  author={Carlos Siqueira and Daniel Smania},
  title={Holomorphic motions for unicritical correspondences},
  journal={Nonlinearity},
  volume={30},
  number={8},
  pages={3104},
  DOI={10.1088/1361-6544/aa7736},
  year={2017},
  abstract={We study quasiconformal deformations and mixing properties of hyperbolic sets in the family of holomorphic correspondences z r   +   c , where r   >  1 is rational. Julia sets in this family are projections of Julia sets of holomorphic maps on ##IMG## [http://ej.iop.org/images/0951-7715/30/8/3104/nonaa7736ieqn001.gif] {$\mathbb{C}^2, $} which are skew-products when r is integer, and solenoids when r is non-integer and c is close to zero. Every hyperbolic Julia set in ##IMG## [http://ej.iop.org/images/0951-7715/30/8/3104/nonaa7736ieqn002.gif] {$\mathbb{C}^2$} moves holomorphically. The projection determines a branched holomorphic motion with local (and sometimes global) parameterizations of the plane Julia set by quasiconformal curves.}
}

@article{Mukherjee,
title = {Schwarz reflections and anti-holomorphic correspondences},
journal = {Advances in Mathematics},
volume = {385},
pages = {107766},
year = {2021},
issn = {0001-8708},
doi = {https://doi.org/10.1016/j.aim.2021.107766},
url = {https://www.sciencedirect.com/science/article/pii/S000187082100205X},
author = {Seung-Yeop Lee and Mikhail Lyubich and Nikolai G. Makarov and Sabyasachi Mukherjee},
keywords = {Correspondences, Mating, Straightening map, Schwarz reflection},
abstract = {In this paper, we continue exploration of the dynamical and parameter planes of one-parameter families of Schwarz reflections that was initiated in [14], [15]. Namely, we consider a family of quadrature domains obtained by restricting the Chebyshev cubic polynomial to various univalent discs. Then we perform a quasiconformal surgery that turns these reflections to parabolic rational maps (which is the crucial technical ingredient of our theory). It induces a straightening map between the parameter plane of Schwarz reflections and the parabolic Tricorn. We describe various properties of this straightening highlighting the issues related to its anti-holomorphic nature. We complete the discussion by comparing our family with the classical Bullett-Penrose family of matings between groups and rational maps induced by holomorphic correspondences. More precisely, we show that the Schwarz reflections give rise to anti-holomorphic correspondences that are matings of parabolic anti-rational maps with the abstract modular group. We further illustrate our mating framework by studying the correspondence associated with the Schwarz reflection map of a deltoid.}
}

@Article{BL19,
author="Bullett, S. and Lomonaco, L.",
title=" Mating quadratic maps with the modular group {II}.",
journal=" Invent. Math.",
year="2020",
volume="220",
number=" ",
pages="185-210 ",
issn=" ",
doi="10.1007/s00222-019-00927-9",
url=" "
}

@article{siqueira2025quadratic,
  author = {Carlos Siqueira},
  title = {Similarity between the {M}ultibrot set and the {J}ulia set of correspondences at {M}isiurewicz points},
  year = {2025},
  note = {\url{https://arxiv.org/abs/2509.07266}},
}

@article{TanLei,
  author={Lei, Tan},
  title={Similarity between the {M}andelbrot set and {J}ulia sets},
  journal={Communications in Mathematical Physics},
  volume={134},
  number={3},
  pages={587--617},
  DOI={10.1007/BF02098448},
  year={1990},
  abstract={The Mandelbrot set M is ?self-similar?about any Misiurewicz pointc in the sense that if we examine a neighborhood ofc inM with a very powerful microscope, and then increase the magnification by a carefully chosen factor, the picture will be unchanged except for a rotation. The corresponding Julia setJc is also ?self-similar?in the same sense, with the same magnification factor. Moreover, the two setsM andJc are ?similar?in the sense that if we use a very powerful microscope to look atM andJc, both focused atc, the structures we see look like very much the same.}
}

@BOOK{Ahlfors,
  AUTHOR = {Ahlfors, L. V.},
  PUBLISHER = {McGraw-Hill Book Company},
  DATE = {1979},
  EDITION = {3},
  SUBTITLE = {An introduction to the theory of analytic functions of one complex variable},
  year = {1979},
  TITLE = {Complex Analysis},
}

@BOOK{Milnor,
  AUTHOR = {Milnor, John},
  PUBLISHER = {Princeton University Press},
  year = {2006},
  EDITION = {3},
  TITLE = {Dynamics in One Complex Variable},
}

\end{document}